\begin{document}


\newtheorem{thm}{Theorem}[section]
\newtheorem{lem}[thm]{Lemma}
\newtheorem{cor}[thm]{Corollary}
\newtheorem{pro}[thm]{Proposition}
\theoremstyle{definition}
\newtheorem{defi}[thm]{Definition}
\newtheorem{ex}[thm]{Example}
\newtheorem{rmk}[thm]{Remark}
\newtheorem{pdef}[thm]{Proposition-Definition}
\newtheorem{condition}[thm]{Condition}

\renewcommand{\labelenumi}{{\rm(\alph{enumi})}}
\renewcommand{\theenumi}{\alph{enumi}}

\newcommand {\emptycomment}[1]{} 

\newcommand{\nc}{\newcommand}
\newcommand{\delete}[1]{}

\nc{\tred}[1]{\textcolor{red}{#1}}
\nc{\tblue}[1]{\textcolor{blue}{#1}}
\nc{\tgreen}[1]{\textcolor{green}{#1}}
\nc{\tpurple}[1]{\textcolor{purple}{#1}}
\nc{\tgray}[1]{\textcolor{gray}{#1}}
\nc{\torg}[1]{\textcolor{orange}{#1}}
\nc{\tmag}[1]{\textcolor{magenta}}
\nc{\btred}[1]{\textcolor{red}{\bf #1}}
\nc{\btblue}[1]{\textcolor{blue}{\bf #1}}
\nc{\btgreen}[1]{\textcolor{green}{\bf #1}}
\nc{\btpurple}[1]{\textcolor{purple}{\bf #1}}



\nc{\tforall}{\ \ \text{for all }}
\nc{\hatot}{\,\widehat{\otimes} \,}
\nc{\complete}{completed\xspace}
\nc{\wdhat}[1]{\widehat{#1}}

\nc{\ts}{\mathfrak{p}}
\nc{\mts}{c_{(i)}\ot d_{(j)}}

\nc{\NA}{{\bf NA}}
\nc{\LA}{{\bf Lie}}
\nc{\CLA}{{\bf CLA}}

\nc{\cybe}{CYBE\xspace}
\nc{\nybe}{NYBE\xspace}
\nc{\ccybe}{CCYBE\xspace}

\nc{\ndend}{pre-Novikov\xspace}
\nc{\calb}{\mathcal{B}}
\nc{\rk}{\mathrm{r}}
\newcommand{\g}{\mathfrak g}
\newcommand{\h}{\mathfrak h}
\newcommand{\pf}{\noindent{$Proof$.}\ }
\newcommand{\frkg}{\mathfrak g}
\newcommand{\frkh}{\mathfrak h}
\newcommand{\Id}{\rm{Id}}
\newcommand{\gl}{\mathfrak {gl}}
\newcommand{\ad}{\mathrm{ad}}
\newcommand{\add}{\frka\frkd}
\newcommand{\frka}{\mathfrak a}
\newcommand{\frkb}{\mathfrak b}
\newcommand{\frkc}{\mathfrak c}
\newcommand{\frkd}{\mathfrak d}
\newcommand {\comment}[1]{{\marginpar{*}\scriptsize\textbf{Comments:} #1}}

\nc{\vspa}{\vspace{-.1cm}}
\nc{\vspb}{\vspace{-.2cm}}
\nc{\vspc}{\vspace{-.3cm}}
\nc{\vspd}{\vspace{-.4cm}}
\nc{\vspe}{\vspace{-.5cm}}


\nc{\disp}[1]{\displaystyle{#1}}
\nc{\bin}[2]{ (_{\stackrel{\scs{#1}}{\scs{#2}}})}  
\nc{\binc}[2]{ \left (\!\! \begin{array}{c} \scs{#1}\\
    \scs{#2} \end{array}\!\! \right )}  
\nc{\bincc}[2]{  \left ( {\scs{#1} \atop
    \vspace{-.5cm}\scs{#2}} \right )}  
\nc{\ot}{\otimes}
\nc{\sot}{{\scriptstyle{\ot}}}
\nc{\otm}{\overline{\ot}}
\nc{\ola}[1]{\stackrel{#1}{\la}}

\nc{\scs}[1]{\scriptstyle{#1}} \nc{\mrm}[1]{{\rm #1}}

\nc{\dirlim}{\displaystyle{\lim_{\longrightarrow}}\,}
\nc{\invlim}{\displaystyle{\lim_{\longleftarrow}}\,}

\nc{\bfk}{{\bf k}} \nc{\bfone}{{\bf 1}}
\nc{\rpr}{\circ}
\nc{\dpr}{{\tiny\diamond}}
\nc{\rprpm}{{\rpr}}

\nc{\mmbox}[1]{\mbox{\ #1\ }} \nc{\ann}{\mrm{ann}}
\nc{\Aut}{\mrm{Aut}} \nc{\can}{\mrm{can}}
\nc{\twoalg}{{two-sided algebra}\xspace}
\nc{\colim}{\mrm{colim}}
\nc{\Cont}{\mrm{Cont}} \nc{\rchar}{\mrm{char}}
\nc{\cok}{\mrm{coker}} \nc{\dtf}{{R-{\rm tf}}} \nc{\dtor}{{R-{\rm
tor}}}
\renewcommand{\det}{\mrm{det}}
\nc{\depth}{{\mrm d}}
\nc{\End}{\mrm{End}} \nc{\Ext}{\mrm{Ext}}
\nc{\Fil}{\mrm{Fil}} \nc{\Frob}{\mrm{Frob}} \nc{\Gal}{\mrm{Gal}}
\nc{\GL}{\mrm{GL}} \nc{\Hom}{\mrm{Hom}} \nc{\hsr}{\mrm{H}}
\nc{\hpol}{\mrm{HP}}  \nc{\id}{\mrm{id}} \nc{\im}{\mrm{im}}

\nc{\incl}{\mrm{incl}} \nc{\length}{\mrm{length}}
\nc{\LR}{\mrm{LR}} \nc{\mchar}{\rm char} \nc{\NC}{\mrm{NC}}
\nc{\mpart}{\mrm{part}} \nc{\pl}{\mrm{PL}}
\nc{\ql}{{\QQ_\ell}} \nc{\qp}{{\QQ_p}}
\nc{\rank}{\mrm{rank}} \nc{\rba}{\rm{RBA }} \nc{\rbas}{\rm{RBAs }}
\nc{\rbpl}{\mrm{RBPL}}
\nc{\rbw}{\rm{RBW }} \nc{\rbws}{\rm{RBWs }} \nc{\rcot}{\mrm{cot}}
\nc{\rest}{\rm{controlled}\xspace}
\nc{\rdef}{\mrm{def}} \nc{\rdiv}{{\rm div}} \nc{\rtf}{{\rm tf}}
\nc{\rtor}{{\rm tor}} \nc{\res}{\mrm{res}} \nc{\SL}{\mrm{SL}}
\nc{\Spec}{\mrm{Spec}} \nc{\tor}{\mrm{tor}} \nc{\Tr}{\mrm{Tr}}
\nc{\mtr}{\mrm{sk}}

\nc{\ab}{\mathbf{Ab}} \nc{\Alg}{\mathbf{Alg}}

\nc{\BA}{{\mathbb A}} \nc{\CC}{{\mathbb C}} \nc{\DD}{{\mathbb D}}
\nc{\EE}{{\mathbb E}} \nc{\FF}{{\mathbb F}} \nc{\GG}{{\mathbb G}}
\nc{\HH}{{\mathbb H}} \nc{\LL}{{\mathbb L}} \nc{\NN}{{\mathbb N}}
\nc{\QQ}{{\mathbb Q}} \nc{\RR}{{\mathbb R}} \nc{\BS}{{\mathbb{S}}} \nc{\TT}{{\mathbb T}}
\nc{\VV}{{\mathbb V}} \nc{\ZZ}{{\mathbb Z}}


\nc{\calao}{{\mathcal A}} \nc{\cala}{{\mathcal A}}
\nc{\calc}{{\mathcal C}} \nc{\cald}{{\mathcal D}}
\nc{\cale}{{\mathcal E}} \nc{\calf}{{\mathcal F}}
\nc{\calfr}{{{\mathcal F}^{\,r}}} \nc{\calfo}{{\mathcal F}^0}
\nc{\calfro}{{\mathcal F}^{\,r,0}} \nc{\oF}{\overline{F}}
\nc{\calg}{{\mathcal G}} \nc{\calh}{{\mathcal H}}
\nc{\cali}{{\mathcal I}} \nc{\calj}{{\mathcal J}}
\nc{\call}{{\mathcal L}} \nc{\calm}{{\mathcal M}}
\nc{\caln}{{\mathcal N}} \nc{\calo}{{\mathcal O}}
\nc{\calp}{{\mathcal P}} \nc{\calq}{{\mathcal Q}} \nc{\calr}{{\mathcal R}}
\nc{\calt}{{\mathcal T}} \nc{\caltr}{{\mathcal T}^{\,r}}
\nc{\calu}{{\mathcal U}} \nc{\calv}{{\mathcal V}}
\nc{\calw}{{\mathcal W}} \nc{\calx}{{\mathcal X}}
\nc{\CA}{\mathcal{A}}

\nc{\fraka}{{\mathfrak a}} \nc{\frakB}{{\mathfrak B}}
\nc{\frakb}{{\mathfrak b}} \nc{\frakd}{{\mathfrak d}}
\nc{\oD}{\overline{D}}
\nc{\frakF}{{\mathfrak F}} \nc{\frakg}{{\mathfrak g}}
\nc{\frakm}{{\mathfrak m}} \nc{\frakM}{{\mathfrak M}}
\nc{\frakMo}{{\mathfrak M}^0} \nc{\frakp}{{\mathfrak p}}
\nc{\frakS}{{\mathfrak S}} \nc{\frakSo}{{\mathfrak S}^0}
\nc{\fraks}{{\mathfrak s}} \nc{\os}{\overline{\fraks}}
\nc{\frakT}{{\mathfrak T}}
\nc{\oT}{\overline{T}}
\nc{\frakX}{{\mathfrak X}} \nc{\frakXo}{{\mathfrak X}^0}
\nc{\frakx}{{\mathbf x}}
\nc{\frakTx}{\frakT}      
\nc{\frakTa}{\frakT^a}        
\nc{\frakTxo}{\frakTx^0}   
\nc{\caltao}{\calt^{a,0}}   
\nc{\ox}{\overline{\frakx}} \nc{\fraky}{{\mathfrak y}}
\nc{\frakz}{{\mathfrak z}} \nc{\oX}{\overline{X}}


\title[Lie bialgebras via Novikov bialgebras]{Infinite-dimensional Lie bialgebras\\ via affinization of Novikov bialgebras and Koszul duality}

\author{Yanyong Hong}
\address{School of Mathematics, Hangzhou Normal University,
Hangzhou 311121, PR China}
\email{yyhong@hznu.edu.cn}

\author{Chengming Bai}
\address{Chern Institute of Mathematics \& LPMC, Nankai University, Tianjin 300071, PR China}
\email{baicm@nankai.edu.cn}

\author{Li Guo}
\address{Department of Mathematics and Computer Science, Rutgers University, Newark, NJ 07102, USA}
         \email{liguo@rutgers.edu}

\subjclass[2010]{17A30, 
17B62, 
17B67, 
17B65, 
18M70, 
17B38, 
16S30, 
17A60, 
17D25  
}

\keywords{Infinite-dimensional Lie algebra, Lie bialgebra, classical Yang-Baxter equation, Novikov algebra, Novikov bialgebra, Novikov Yang-Baxter equation, $\calo$-operator, pre-Lie algebra, pre-Novikov algebra, operadic Koszul duality}

\begin{abstract}
Balinsky and Novikov showed that the affinization of a Novikov
algebra naturally defines a Lie algebra, a property that in fact
characterizes the Novikov algebra. It is also an instance of the
operadic Koszul duality. In this paper, we develop a bialgebra
theory for the Novikov algebra, namely the Novikov bialgebra,
which is characterized by the fact that its affinization (by a
quadratic right Novikov algebra) gives an infinite-dimensional Lie bialgebra, suggesting a Koszul duality for properads.
A Novikov bialgebra is also characterized as a Manin triple of Novikov algebras.
The notion of Novikov Yang-Baxter equation is introduced, whose
skewsymmetric solutions can be used to produce Novikov bialgebras and hence Lie bialgebras.
Moreover, these solutions also give rise to skewsymmetric solutions of the classical Yang-Baxter equation in the infinite-dimensional Lie algebras from the Novikov algebras.
\end{abstract}

\maketitle

\vspace{-1.2cm}

\tableofcontents

\vspace{-1.2cm}

\allowdisplaybreaks

\section{Introduction}

In this paper, the Balinsky-Novikov construction~\cite{BN} of infinite-dimensional Lie algebras via the affinization of Novikov algebras is lifted to a construction of infinite-dimensional Lie bialgebras, built on the notions of Novikov bialgebras and affinization in the bialgebra context.

\subsection{Infinite-dimensional Lie algebras by affinization}

\subsubsection{Lie algebras via affinization}
An important construction of infinite-dimensional Lie algebras is a process of affinization (without centers) which equips a Lie algebra structure on the tensor product, over a field $\bfk$ of characteristic zero,
 \begin{equation} \label{eq:aff}
\hat{A}:=A[t,t^{-1}]:=A\ot \bfk[t,t^{-1}]
\end{equation}
of a certain algebra $A$ of finite dimension with another algebraic structure on the space of Laurent polynomials.
We broadly call this process the construction of infinite-dimensional Lie algebras via the {\bf $\calp$-algebra affinization}, if $A$ is a $\calp$-algebra for an algebraic operad $\calp$.

The classical instance of the affinization is the Lie algebra affinization $\hat{L}=L\ot \bfk[t,t^{-1}]$ where $L$ is a finite-dimensional Lie algebra and $\bfk[t,t^{-1}]$ is the usual commutative associative algebra.

\subsubsection{Novikov algebras and their affinization}

Another instance of affinization is the Novikov algebra affinization.
Novikov algebras were introduced in connection with Hamiltonian
operators in the formal variational calculus~\cite{GD1, GD2} and
Poisson brackets of hydrodynamic type~\cite{BN}. As an important
subclass of pre-Lie algebras, they have attracted great interest because of their broad connections, in particular to Lie conformal algebras~\cite{X1} and vertex algebras~\cite{BLP}.

The Novikov algebra also shows it significance in its role in affinization.

\begin{thm}\label{correspond1} $($Balinsky-Novikov~\cite{BN}$)$
    Let $A$ be a vector space with a binary operation $\circ$. Define a binary operation on $A[t,t^{-1}]:=A\otimes \bfk[t,t^{-1}]$ by
    \begin{eqnarray}\label{infLie1} \notag
        [at^i, bt^j]=i(a\circ b)t^{i+j-1}-j(b\circ a)t^{i+j-1}\;\;\tforall
        a,b\in A, i, j\in\mathbb{Z},
    \end{eqnarray}
    where $a t^i:=a\otimes t^{i}$. Then $(A[t,t^{-1}], [\cdot,\cdot])$
    is a Lie algebra if and only if $(A, \circ)$ is a Novikov algebra.
\end{thm}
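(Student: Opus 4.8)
The plan is to observe first that skew-symmetry of $[\cdot,\cdot]$ is automatic and needs no hypothesis on $\circ$: directly from the defining formula,
\[
[at^i,bt^j]=\bigl(i(a\circ b)-j(b\circ a)\bigr)t^{i+j-1}=-[bt^j,at^i],
\]
and since $\bfk$ has characteristic zero this also yields $[x,x]=0$ after bilinear extension. Thus the entire content of the theorem is the equivalence between the Jacobi identity and the two Novikov axioms (left symmetry and right commutativity), and I would reduce everything to a single iterated-bracket computation on homogeneous elements.

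Next I would compute the double bracket. Writing $[at^i,bt^j]=ut^{i+j-1}$ with $u=i(a\circ b)-j(b\circ a)$ and bracketing again with $ct^k$, I get a term supported in the single power $t^{i+j+k-2}$ whose coefficient is $(i+j-1)(u\circ c)-k(c\circ u)$. This expands into the four products $(a\circ b)\circ c$, $(b\circ a)\circ c$, $c\circ(a\circ b)$, $c\circ(b\circ a)$, each weighted by a polynomial in $i,j,k$. Forming the Jacobiator, i.e. the cyclic sum over the three simultaneous substitutions $(a,i)\to(b,j)\to(c,k)\to(a,i)$, then produces a single $A$-valued polynomial $P(i,j,k)$ multiplying $t^{i+j+k-2}$.

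The key step is purely formal: since the monomials in $i,j,k$ are linearly independent as functions on $\ZZ^3$, the Jacobi identity holds for all $i,j,k\in\ZZ$ if and only if every monomial coefficient of $P$ vanishes in $A$ for all $a,b,c$. Carrying out the bookkeeping, I expect the coefficients of $i^2,j^2,k^2$ and of $i,j,k$ to each reduce to a single instance of right commutativity $(x\circ y)\circ z=(x\circ z)\circ y$, while the coefficients of the three cross terms $ij,jk,ki$ each reduce to a single instance of the left-symmetric (pre-Lie) identity $(x\circ y)\circ z-x\circ(y\circ z)=(y\circ x)\circ z-y\circ(x\circ z)$. Since $a,b,c$ are arbitrary, the vanishing of all these coefficients is exactly the conjunction of the two Novikov axioms, which settles both directions at once.

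The main obstacle is organizational rather than conceptual: one must keep the twelve product-terms and their polynomial weights straight through the cyclic sum and verify that, after collecting like monomials, the six ``diagonal'' coefficients collapse precisely to right commutativity and the three ``cross'' coefficients precisely to left symmetry, with no leftover relations imposing extra conditions. The forward implication (Novikov $\Rightarrow$ Lie) is then immediate, while for the converse the only delicate point is the justification that the vanishing of $P$ on all integer triples forces each coefficient to vanish; this is the standard fact that a polynomial vanishing on $\ZZ^3$ is identically zero, and it is here that the infinitude of $\bfk$ (guaranteed by characteristic zero) is used.
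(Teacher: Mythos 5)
Your argument is correct, and the bookkeeping checks out: writing the Jacobiator of three homogeneous elements as $P(i,j,k)\,t^{i+j+k-2}$, the coefficients of $i^2$, $j^2$, $k^2$ and of $i$, $j$, $k$ are (up to sign) cyclic instances of right commutativity $(x\circ y)\circ z=(x\circ z)\circ y$, the coefficients of $ij$, $jk$, $ki$ are cyclic instances of left symmetry, there is no constant term and no cubic term, and the passage from ``$P$ vanishes on $\ZZ^3$'' to ``every monomial coefficient vanishes'' is legitimate because $\bfk$ has characteristic zero, so after applying linear functionals one gets scalar polynomials vanishing on a Zariski-dense set. Your route is, however, genuinely different from the paper's treatment: the paper never proves Theorem~\ref{correspond1} at all --- it quotes it from Balinsky--Novikov \cite{BN} --- and its internal generalization, Theorem~\ref{pp:tensorlie}, obtains the ``if'' direction abstractly from the Ginzburg--Kapranov operadic Koszul duality (\cite{GK}, see also \cite{LV}) applied to the Koszul-dual pair of the Novikov and right Novikov operads, while the ``only if'' direction is again deferred to \cite{BN}. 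Your proof is thus the more elementary and self-contained one: it needs no operadic input, handles both implications in a single computation, and makes explicit which monomial coefficient of the Jacobiator encodes which Novikov axiom --- in the same spirit as the direct coefficient-comparison arguments the paper does carry out for the coalgebra and bialgebra extensions (Theorems~\ref{Constr-Lie coalgebra} and \ref{Gcorrespond6}), where no operadic shortcut is available. What the paper's operadic route buys instead is generality: the tensor product of a $\calp$-algebra with a $\calp^{\textup{!`}}$-algebra carries a Lie algebra structure for any binary quadratic Koszul operad $\calp$, of which the Laurent-polynomial affinization is a single specialization.
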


Note that, thanks to the only if part of the theorem, the Novikov algebra is characterized by this affinization, further highlighting the importance of the Novikov algebra.

The Novikov algebra affinization gives many of the
infinite-dimensional Lie algebras which are important in
mathematical physics, such as the Witt algebra, centerless
Heisenberg-Virasoro algebra  \cite{ADKP} and
Schr\"odinger-Virasoro algebra  \cite{H}. There are also close
relationships between these infinite-dimensional Lie algebras and
the corresponding Novikov algebras~\cite{BN, PB}.

\subsubsection{Operadic Koszul duality} \label{sss:opdalg}

Note that, unlike the commutative associative algebra structure on $\bfk[t,t^{-1}]$ in the Lie algebra affinization,  in the Novikov algebra affinization, the space $\bfk[t,t^{-1}]$ is equipped with the right Novikov algebra product
$t^i\diamond t^j:=it^{i+j}$ (see
Example~\ref{Laurent-right Nov}(\ref{it:novex2})).
As it turns out, these two instances of affinization constructions of infinite-dimensional Lie algebras fit into the Ginzburg-Kapranov operadic Koszul duality~\cite{GK}.

Let $\mathcal{P}$ be a binary quadratic operad and $\mathcal{P}^{\textup{!`}}$ be its operadic Koszul dual. By~\cite[Corollary~2.29]{GK} (see also~\cite[Proposition~7.6.5]{LV}), the tensor product of a $\mathcal{P}$-algebra with a $\mathcal{P}^{\textup{!`}}$-algebra is naturally endowed with a Lie algebra structure, giving a potentially very general procedure of constructing Lie algebras.
Thus Lie algebra affinization and Novikov algebra affinization have the operadic interpretation that the Koszul dual of the operad of Lie algebras (resp. Novikov algebras) is the operad of commutative associative algebras (resp. right Novikov algebras~\cite{Dz}).

\subsection{Infinite-dimensional Lie bialgebras}\

A Lie bialgebra is composed of a Lie algebra and a Lie coalgebra
joined together by a cocycle condition. Introduced by V. Drinfel'd
in his study of Hamiltonian mechanics and Poisson-Lie
groups~\cite{Dr}, the Lie bialgebra is the
classical limit of a quantized universal enveloping
algebra~\cite{CP,Dr1}. The great importance of Lie bialgebras is
also reflected by its close relationship with several other fundamental
notions. In the finite-dimensional case, Lie bialgebras are
characterized by Manin triples of Lie algebras. Moreover,
solutions of the classical Yang-Baxter equation (CYBE), also
called the classical $r$-matrices, naturally give rise to
coboundary Lie bialgebras. Furthermore, such solutions are
provided by $\calo$-operators of Lie algebras, which in turn are provided by
pre-Lie algebras~\cite{Bai,CP,Ku1,STS}.
These connections are depicted in the diagram
\begin{equation}
    \begin{split}
        \xymatrix{
            \text{pre-Lie}\atop\text{ algebras} \ar[r]     & \mathcal{O}\text{-operators of}\atop\text{Lie algebras}\ar[r] &
            \text{solutions of}\atop \text{CYBE} \ar[r]  & \text{Lie}\atop \text{ bialgebras}  \ar@{<->}[r] &
        \text{Manin triples of} \atop \text{Lie algebras} }
    \end{split}
    \label{eq:bigliediag}
\end{equation}

Infinite-dimensional Lie bialgebras have attracted a lot of
attention since the very beginning of the Lie bialgebra study.
Solutions of the CYBE with spectral parameters provide
infinite-dimensional Lie bialgebras. In particular, the classical
structures of two infinite-dimensional quantum groups, namely the
Yangians and quantum affine algebras, are the Lie bialgebras on
the loop algebras  and affine Lie algebras, corresponding to
rational and trigonometric solutions
respectively~\cite{CP}.

Numerous studies of Lie bialgebra structures on infinite-dimensional Lie algebras have been carried out, including for the Witt algebra, Virasoro algebra, Schr\"odinger-Virasoro algebra and current algebras~\cite{AMSZ,HLS,KPSST,Mi,MSZ1,NT}.
Note that for each of these Lie bialgebras, the dual of the Lie
coalgebra is not of the same type as the Lie algebra. For example,
the dual of the Lie coalgebra in the bialgebra structure on the
Witt algebra is not the Witt algebra~\cite{NT}. Thus it is natural
to investigate Lie bialgebras in which the Lie algebras and the
dualized Lie algebras from the Lie coalgebras are of the same
type.

\subsection{Novikov bialgebras and their affinization construction of Lie bialgebras}\

Our goal is to construct Lie bialgebras by applying the
affinization process as in Eq.~\eqref{eq:aff} to both the Lie
algebras and Lie coalgebras, focusing on the Novikov algebra
affinization. More precisely, we expand Theorem~\ref{correspond1}
to the context of bialgebras so that, in the resulting Lie
bialgebras, in addition to the Lie algebras, the dualized Lie
algebras from the Lie coalgebras are the affinizations of Novikov
algebras. To achieve this goal, we need to overcome several
challenges.

\subsubsection{Affinization of Novikov coalgebras}
As a first step, we consider the dual version of the Novikov algebra affinization, for Novikov coalgebras. As noted just above, this ``Novikov coalgebra affinization" cannot be achieved by the usual notion of Lie coalgebras.
The reason behind this is that the product on the space of Laurent polynomials for the Novikov algebra affinization could not come from dualizing a usual coproduct (see Lemma~\ref{l:laucop}).

To move forward, we introduce the completed tensor
product to serve as the target space of more general coproducts,
leading to the notion of completed Lie coalgebras and other related concepts. Especially,
there is a completed right Novikov coalgebra structure on the
space of Laurent polynomials, allowing us to carry out the
affinization process. Then we obtain a dual version of
Theorem~\ref{correspond1}, that is, there is a natural completed
Lie coalgebra structure on the tensor product of a Novikov
coalgebra and the completed right Novikov coalgebra of Laurent
polynomials (Theorem~\ref{Constr-Lie coalgebra}), a property that in fact characterizes the Novikov coalgebra.

\subsubsection{Novikov bialgebras and their affinization}
We next lift the affinization characterization of the Novikov algebra in Theorem~\ref{correspond1} and its aforementioned coalgebra variation to the level of bialgebras. For this purpose, we introduce, on the one hand, the notion of a Novikov bialgebra, composed of a Novikov algebra and a Novikov coalgebra satisfying suitable compatibility conditions and, on the other hand, the notion of a quadratic $\ZZ$-graded right Novikov algebra, as a $\ZZ$-graded right Novikov algebra equipped with an invariant bilinear form.
Note that the latter algebra is not the $\ZZ$-graded right Novikov
bialgebra obtained from the Novikov bialgebra by taking the opposite
operations. Then we show that the tensor product of a
finite-dimensional Novikov bialgebra and a quadratic $\ZZ$-graded
right Novikov algebra can be naturally endowed with a completed
Lie bialgebra (Theorem \ref{Gcorrespond6}). The converse of this
result also holds when the quadratic $\ZZ$-graded right
Novikov algebra is taken from the space of Laurent polynomials,
giving the desired characterization of the Novikov bialgebra that
its affinization is a Lie bialgebra.

\subsubsection{Operadic interpretation}
In view of the operadic interpretation of
Theorem~\ref{correspond1} by the Ginzburg-Kapranov duality
applied to the operads of the Novikov algebra and the right Novikov algebra (see Section~\ref{sss:opdalg}),
Theorem~\ref{Gcorrespond6} should also have an operadic
interpretation in terms of a Koszul duality of properads (or
dioperads), that can be applied to the properads of Novikov
bialgebras and quadratic right Novikov algebras. Such a duality
has been established by Gan and Vallette for certain quadratic
dioperads and properads~\cite{Ga,Va}. Theorem~\ref{Gcorrespond6}
gives a strong motivation to generalize the duality beyond the quadratic case.

As in the case of Koszul duality of operads, from a Koszul duality of properads, one might expect that a Lie bialgebra structure can be obtained on the tensor product of a $\calp$-algebra with a  $\calp^{\textup{!`}}$-algebra for a suitable properad $\calp$.
This would provide a very general procedure to construct Lie bialgebras, of which the Novikov bialgebra affinization would be a special case.

\subsubsection{Manin triple characterizations}
There is also a characterization of finite-dimensional Novikov bialgebras by Manin triples of Novikov algebras which is comparable to the Manin triple characterization of Lie bialgebras. This characterization depends critically on the condition that the linear dual of every Novikov algebra can be equipped with a representation which can be expressed as a nonzero linear combination of the left and right multiplication operators of the Novikov algebra.  We establish this condition and thereby disprove a claim made in~\cite{Ku} (see
Remark~\ref{rk:kuper}).

The Manin triple characterization of Novikov bialgebras
also offers a natural way to see why quadratic right
Novikov algebras need to appear in the above Lie bialgebras
construction via Novikov bialgebra affinization. Indeed, the Manin
triple characterization of Novikov bialgebras should be compatible
with the Manin triple characterization of the Lie bialgebras
obtained by Novikov bialgebras, in the sense that the tensor product of a Manin triple of Novikov algebras and a right Novikov algebra equipped with a suitable extra structure should naturally give a Manin triple of Lie algebras. We find that this extra structure is precisely the quadratic property on the right Novikov algebra (see Remark~\ref{rk:quadnov}).

\subsubsection{Novikov Yang-Baxter equation and its affinization}
Extending the close relationship between the CYBE and Lie
bialgebras, we define an analogy of the CYBE for Novikov algebras,
called the Novikov Yang-Baxter equation (NYBE), so that skewsymmetric solutions of the NYBE in Novikov algebras give Novikov bialgebras.

On the other hand, from solutions of the NYBE in a Novikov algebra, there is a
construction of solutions of the CYBE in the Lie algebra from the Novikov algebra affinization (Proposition~\ref{pro:NYBE}).
Hence the latter solutions can be regarded as an
affinization of the former solutions.

We introduce the $\mathcal O$-operator of a Novikov algebra as an operator form of the NYBE. It gives a skewsymmetric solution of the NYBE in a semidirect product
Novikov algebra. Moreover, a natural example of $\mathcal
O$-operators of Novikov algebras is given by \ndend algebras whose operad is the splitting of the operad of Novikov
algebras~\cite{BBGN}. An example of \ndend algebras is given by
Zinbiel algebras equipped with derivations.

Overall, the diagram~\eqref{eq:bigliediag} for Lie bialgebras is extended to a diagram for Novikov bialgebras. Furthermore, the two diagrams are related as illustrated in the following commutative diagram.
\begin{equation}
    \begin{split}
        \xymatrix{
\text{\ndend}\atop\text{ algebras} \ar[r]  \ar[d]      &
\mathcal{O}\text{-operators of}\atop\text{Novikov
algebras}\ar[r] \ar[d]&
            \text{solutions of}\atop \text{NYBE} \ar[r]  \ar[d]& \text{Novikov}\atop \text{ bialgebras}   \ar[d] \ar@{<->}[r] & \text{Manin triples of} \atop \text{Novikov algebras} \ar[d]\\
\text{pre-Lie}\atop\text{ algebras} \ar[r] &
\mathcal{O}\text{-operators of}\atop\text{Lie
algebras}\ar[r]&
            \text{solutions of}\atop \text{CYBE} \ar[r]
            & \text{Lie}\atop \text{ bialgebras} \ar@{<->}[r] & \text{Manin triples of} \atop \text{Lie algebras}  }
    \end{split}
    \label{eq:bigdiag}
\end{equation}
The commutativity of the two right squares are given in Proposition~\ref{eq:bialgdouble} and Corollary~\ref{cor:same} respectively and those of the other ones can be similarly obtained. In particular this provides a large supply of Lie bialgebras.

Furthermore, we introduce the notion of a quasi-Frobenius Novikov algebra to characterize a class of skewsymmetric solutions of the NYBE.
In addition, we use quasi-Frobenius Novikov algebras to construct quasi-Frobenius $\ZZ$-graded Lie algebras, thereby providing many examples of infinite-dimensional quasi-Frobenius Lie algebras.
\subsection{Outline of the paper}
The paper is briefly outlined as follows.

In Section~\ref{sec:liebialg}, we first introduce the notions of Novikov bialgebras and quadratic $\ZZ$-graded right Novikov algebras, as well as completed Lie bialgebras.
We show that there is a natural completed Lie bialgebra structure on the tensor product of a finite-dimensional Novikov bialgebra and a quadratic $\ZZ$-graded right Novikov  algebra.
In the special case when the quadratic
$\ZZ$-graded right  Novikov algebra is on the space of Laurent
polynomials, we obtain a characterization of the Novikov bialgebra by the affinization.

In Section~\ref{sec:nybe}, Manin triples of Novikov algebras are introduced to give an equivalent description of finite-dimensional Novikov bialgebras by means of matched pairs. We also present a natural construction of Manin triples of Lie algebras from Manin triples of Novikov algebras and a given quadratic right Novikov algebra.
A special class of Novikov bialgebras defined by two-tensors leads to the notion of  the NYBE in such a way that a skewsymmetric solution of the NYBE gives a Novikov bialgebra. The notions of an $\mathcal{O}$-operator of a Novikov algebra, a pre-Novikov algebra and a quasi-Frobenius Novikov algebra are also introduced to interpret and construct solutions of the NYBE.

In Section~\ref{sec:novlie}, for a Novikov algebra $A$ and a
quadratic $\ZZ$-graded right Novikov algebra $B$, we present a
natural construction of skewsymmetric solutions of the CYBE in the Lie algebra $A\otimes B$ from the skewsymmetric
solutions of the NYBE in $A$. Moreover, a construction of quasi-Frobenius $\ZZ$-graded Lie algebras from quasi-Frobenius Novikov algebras corresponding to a class of skewsymmetric solutions of the NYBE is given.
In particular, utilizing
Section~\ref{sec:nybe} leads to a construction of completed Lie
bialgebras and quasi-Frobenius $\ZZ$-grade Lie algebras from pre-Novikov algebras. The construction is further illustrated by an example.

\section{Infinite-dimensional Lie bialgebras from Novikov bialgebras by Koszul duality}
\label{sec:liebialg}

In this section, the Balinsky-Novikov theorem on Lie algebraic
structures via Novikov algebra affinization is extended to Lie
bialgebras, built on the notions of Novikov bialgebras and
quadratic $\ZZ$-graded right Novikov algebras. The notion of
Novikov bialgebras is introduced in Section~\ref{ss:novbialg}.
Section~\ref{ss:rightnovcoalg} presents the notion of
completed right Novikov coalgebras together with a coalgebra version of the Balinsky-Novikov theorem.  Section~\ref{ss:novbiaff} constructs Lie bialgebras from Novikov bialgebras and quadratic $\ZZ$-graded right Novikov
algebras.

\subsection{Novikov bialgebras}
\label{ss:novbialg}
We start with the classical notions of left and right Novikov algebras.
\begin{defi}
 A {\bf Novikov algebra} is a vector space $A$ with a binary
    operation $\circ$ satisfying
\begin{eqnarray}
        \label{lef} \notag
        (a\circ b)\circ c-a\circ (b\circ c)&=&(b\circ a)\circ c-b\circ (a\circ c),\\
        \label{Nov} \notag
        (a\circ b)\circ c&=&(a\circ c)\circ b ~~~~\tforall  a, b,c\in A.
\end{eqnarray}
    A {\bf right  Novikov algebra} is a vector space $A$ with a binary
 operation $\diamond $ satisfying
    \begin{eqnarray}
        &&\label{RNA1} \notag
        (a\diamond b)\diamond c-a\diamond (b\diamond c)=(a\diamond c)\diamond b-a\diamond (c\diamond b),\\
        &&\label{RNA2} \notag
        a\diamond (b\diamond c)=b\diamond (a\diamond c) ~~~~\tforall  a, b,c\in A.
    \end{eqnarray}
\end{defi}

\begin{rmk}
    \begin{enumerate}
        \item It is obvious that $(A,\circ)$ is a Novikov algebra if and only if its opposite $(A,\diamond)$ defined by $a\diamond b\coloneqq b\circ a$ for
        all $a,b\in A$ is a right Novikov algebra. Thus a Novikov algebra should be more precisely called a left Novikov algebra. We will follow the usual notion and still call it a Novikov algebra unless an emphasis is needed.
        \item The Koszul dual of the operad of (left) Novikov algebras  is the operad of right Novikov algebras~\cite{Dz}.
    \end{enumerate}
\end{rmk}

\begin{ex}\label{Laurent-right Nov}
    \begin{enumerate}
        \item \label{ConstrNov}
The classic example of a Novikov algebra was given by S.~Gelfand~\cite{GD1}.
Let $(A,\cdot)$ be a commutative associative algebra and $D$ be a derivation. Then the binary operation
        \begin{equation}
            a \circ b\coloneqq a\cdot D(b)~~~~\tforall a, b\in A,
        \end{equation}
        defines a Novikov algebra $(A,\circ)$. The binary operation
        $$a\diamond b\coloneqq D(a) \cdot b~~~~\tforall a, b\in A$$
        defines a right Novikov algebra $(A,\diamond)$.
        \item \label{it:novex2}
        As a special case,
        equip the Laurent polynomial algebra ${\bf k}[t,t^{-1}]$ with the natural derivation $D\coloneqq \frac{d}{dt}$. Then $(B={\bf k}[t,t^{-1}], \diamond)$ is a right Novikov algebra with $\diamond$ given by
        \begin{eqnarray}
            t^i\diamond t^j\coloneqq D(t^i)t^j=it^{i+j-1}~~~~\tforall i, j\in \mathbb{Z}.
        \end{eqnarray}
    \end{enumerate}
\end{ex}

Let $A$ be a vector space. Let
$$\tau:A\otimes A\rightarrow A\otimes A,\quad a\otimes b\mapsto b\otimes a \tforall a,b\in A,$$
be the flip operator. Dualizing the notion of a Novikov algebra, we give the next notion.
\begin{defi}
    A {\bf(left) Novikov coalgebra} is a vector space $A$ with a linear map $\Delta: A\rightarrow A\otimes A$, called the coproduct, such that
\begin{eqnarray}
\label{Lc3}(\id\otimes \Delta)\Delta(a)-(\tau\otimes {\rm id})(\id\otimes \Delta)\Delta(a)&=&(\Delta\otimes \id)\Delta(a)-(\tau\otimes {\rm id})(\Delta\otimes \id)\Delta(a),\\
\label{Lc4}(\tau\otimes {\rm id})(\id\otimes \Delta)\tau
     \Delta(a)&=&(\Delta\otimes \id)\Delta(a) \tforall a\in A.
\end{eqnarray}
\end{defi}

Let $A$ be a finite-dimensional vector space and
$\Delta:A\rightarrow A\otimes A$ be a coproduct.
Let $\cdot:A^*\ot A^*\to A^*$ be the
corresponding binary operation on the dual space. As in the case of associative algebras and coalgebras, the pair $(A,\Delta)$ is a Novikov coalgebra if and only if $(A^*,\cdot)$ is a Novikov algebra.

Let $(A,\circ)$ be a Novikov algebra. Define another binary operation $\star$ on $A$ by
\begin{eqnarray} \notag
    a\star b=a\circ b+b\circ a\quad \tforall ~~a,~~b\in A.
\end{eqnarray}
Let $L_A$, $R_A:
A\rightarrow {\rm End}_{\bf k}(A)$ be the linear maps defined respectively by
\begin{eqnarray*}
    L_A(a)(b)\coloneqq a\circ b,\quad R_A(a)(b)\coloneqq b\circ a\quad \tforall  a,~~b\in A.
\end{eqnarray*}
Define $L_{A,\star}: A\rightarrow {\rm End}_{\bf k}(A)$ by $L_{A,\star}=L_A+R_A$.

Now we introduce the main notion in our study.

\begin{defi}\label{Def-Novbi}
A {\bf Novikov bialgebra} is a tripe $(A,\circ,\Delta)$ where  $(A,\circ)$ is a Novikov algebra and $(A, \Delta)$ is a Novikov coalgebra such that, for all $a$, $b\in A$,
the following conditions are satisfied.
 {\small\begin{eqnarray}
            &\label{Lb5}\Delta(a\circ b)=(R_A (b)\otimes \id)\Delta (a)+(\id\otimes L_{A,\star}(a))(\Delta(b)+\tau\Delta(b)),&\\
            &\label{Lb6}(L_{A,\star}(a)\otimes \id)\Delta(b)-(\id\otimes L_{A,\star}(a))\tau\Delta (b)=(L_{A,\star}(b)\otimes \id)\Delta(a)-(\id\otimes L_{A,\star}(b))\tau\Delta(a),&\\
            &\label{Lb7}(\id\otimes R_A(a)-R_A(a)\otimes
            \id)(\Delta(b)+\tau\Delta(b))=(\id\otimes R_A(b)-R_A(b)\otimes
            \id)(\Delta(a)+\tau\Delta(a)).&
    \end{eqnarray}}
\end{defi}

\begin{ex}\label{ex-Nov-bi}
    Let $(A,\circ)$ be the $2$-dimensional Novikov algebra
    in~\cite{BaiMeng} with a basis $\{e_1,e_2\}$ whose multiplication is
    given by
    \begin{eqnarray*}
        e_1\circ e_1=e_1,~~~e_2\circ e_1=e_2,~~~e_1\circ e_2=e_2\circ e_2=0.
    \end{eqnarray*}
    Define $\Delta_A: A\rightarrow A\otimes A$ by
    \begin{eqnarray*}
        \Delta_A(e_1)=\lambda e_2\otimes e_2,\quad \Delta_A(e_2)=0,
    \end{eqnarray*}
    for a fixed $\lambda\in \bf k$. Then it is direct to verify that $(A, \circ, \Delta_A)$ is a Novikov bialgebra.
\end{ex}

\subsection{Completed right Novikov coalgebras and \complete Lie coalgebras}
\label{ss:rightnovcoalg}

For the rest of this section, we assume that $A$ is a finite-dimensional vector space.

\begin{defi}
A {\bf $\ZZ$-graded right Novikov algebra} (resp. a {\bf $\ZZ$-graded Lie algebra})  is a right Novikov algebra $(B,\diamond )$ (resp. a Lie algebra $(B,[\cdot,\cdot]$)) with a linear decomposition
    $B=\oplus_{i\in \ZZ} B_i$ such that each $B_i$ is finite-dimensional and $B_i \diamond B_j\subset B_{i+j}$ (resp. $[B_i, B_j]\subset B_{i+j}$) for all $i, j\in \ZZ$.
\end{defi}

More generally, the definition still makes sense without the finite-dimensional condition on $B_i$. We impose the condition for the application in this paper.
\begin{ex}\label{Laurent Novikov}
    For the Laurent polynomial algebra in Example~\ref{Laurent-right Nov} \eqref{it:novex2}, let $B_i={\bf k}t^{i+1}$ for $i\in \mathbb{Z}$. Then
    $(B=\oplus_{i\in \mathbb{Z}}B_i, \diamond)$ is a $\ZZ$-graded
    right Novikov algebra.
\end{ex}

Generalizing Theorem~\ref{correspond1}, the pairing of a (left) Novikov algebra and a $\ZZ$-graded right Novikov algebra gives a $\ZZ$-graded Lie algebra as stated below.

\begin{thm} \label{pp:tensorlie} Let $(A,\circ)$ be a (left)
    Novikov algebra and $(B,\diamond)$ be a $\ZZ$-graded right Novikov algebra. Define
    a binary operation on $A\otimes B$ by
    \begin{equation}\label{Liecons1} \notag
        [a_1\otimes b_1, a_2\otimes b_2]=a_1\circ a_2\otimes b_1\diamond
        b_2-a_2\circ a_1\otimes b_2\diamond b_1~~~~\tforall  a_1, a_2\in A,
        b_1, b_2\in B.
    \end{equation}
    Then $(A\otimes B, [\cdot, \cdot])$ is a $\ZZ$-graded Lie algebra,
    called {\bf the induced Lie algebra} (from $(A, \circ)$ and $(B,
        \diamond))$. Further, if $(B, \diamond)=( {\bf k}[t,t^{-1}], \diamond)$ is
    the $\mathbb{Z}$-graded right Novikov algebra in Example~\ref{Laurent Novikov}, then $(A\otimes B, [\cdot, \cdot])$ is a
    $\mathbb Z$-graded Lie algebra if and only if $(A,\circ)$
    is a Novikov algebra.
\end{thm}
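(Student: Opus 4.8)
The antisymmetry of $[\cdot,\cdot]$ is immediate, since swapping $a_1\ot b_1$ and $a_2\ot b_2$ reverses the sign of the bracket; and because $B_i\diamond B_j\subset B_{i+j}$, one gets $[A\ot B_i,A\ot B_j]\subset A\ot B_{i+j}$, so declaring $(A\ot B)_i:=A\ot B_i$ makes the bracket $\ZZ$-graded. Thus the whole content of the first assertion is the Jacobi identity, which I would organize conceptually rather than expand all twelve terms by brute force. The key observation is that $[x,y]=m(x,y)-m(y,x)$ is the commutator of the (nonassociative) product $m(a\ot b,a'\ot b'):=(a\circ a')\ot(b\diamond b')$ on $A\ot B$. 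For any such commutator the Jacobiator equals the signed sum $\sum_{\sigma\in S_3}\mathrm{sgn}(\sigma)\,\mathrm{as}_m(x_{\sigma(1)},x_{\sigma(2)},x_{\sigma(3)})$ of the associator $\mathrm{as}_m(x,y,z)=m(m(x,y),z)-m(x,m(y,z))$, where $\sigma$ simultaneously permutes the triples $(a_1,a_2,a_3)$ and $(b_1,b_2,b_3)$. Writing $x=a_1\ot b_1$, $y=a_2\ot b_2$, $z=a_3\ot b_3$, this associator factors cleanly as $\mathrm{as}_m(x,y,z)=L(a_1,a_2,a_3)\ot L^\diamond(b_1,b_2,b_3)-R(a_1,a_2,a_3)\ot R^\diamond(b_1,b_2,b_3)$, where $L(a_1,a_2,a_3)=(a_1\circ a_2)\circ a_3$, $R(a_1,a_2,a_3)=a_1\circ(a_2\circ a_3)$, and $L^\diamond,R^\diamond$ are the analogous left- and right-bracketings in $(B,\diamond)$.

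Second, I would translate the four defining identities into symmetry statements about $L,R,L^\diamond,R^\diamond$ under permuting their three arguments. The Novikov identity on $A$ says that $L$ is symmetric in its last two arguments, while the left-commutativity of $B$ says that $R^\diamond$ is symmetric in its first two arguments; and the two associator-symmetry axioms say that $L-R$ is symmetric in its first two arguments and $L^\diamond-R^\diamond$ is symmetric in its last two arguments. These four symmetries are exactly Koszul-dual to one another. Using the symmetry of $L$ together with that of $L^\diamond-R^\diamond$, I would pair the six permutations of $S_3$ by the adjacent transposition of slots $2,3$ and rewrite $\sum_\sigma\mathrm{sgn}(\sigma)\,L\ot L^\diamond$ entirely in terms of the $R^\diamond$-bracketings; dually, using the symmetry of $R^\diamond$ and of $L-R$, I would pair by the transposition of slots $1,2$ and rewrite $\sum_\sigma\mathrm{sgn}(\sigma)\,R\ot R^\diamond$ in terms of the $L$-bracketings. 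Both signed sums then reduce to the same six monomials of the form $L(\cdots)\ot R^\diamond(\cdots)$, so their difference — the Jacobiator — vanishes. The main obstacle is precisely this bookkeeping: choosing the two pairings so that the Novikov symmetries on the $A$-factor and the right-Novikov symmetries on the $B$-factor are consumed in matching order. This is the combinatorial shadow of the operadic Koszul duality recalled in Section~\ref{sss:opdalg}.

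Finally, for the characterization I would specialize to $B=(\bfk[t,t^{-1}],\diamond)$ with $t^i\diamond t^j=it^{i+j-1}$. Then the bracket becomes $[a t^i, b t^j]=(i\,a\circ b-j\,b\circ a)\,t^{i+j-1}$, which is exactly the bracket of Theorem~\ref{correspond1}. Since the grading is automatic from the first part, $(A\ot B,[\cdot,\cdot])$ is a $\ZZ$-graded Lie algebra if and only if it is a Lie algebra, and by Theorem~\ref{correspond1} this holds if and only if $(A,\circ)$ is a Novikov algebra. Here the `if' direction is also a special case of the first assertion, since $\bfk[t,t^{-1}]$ is a $\ZZ$-graded right Novikov algebra by Example~\ref{Laurent Novikov}; the new content is the converse.
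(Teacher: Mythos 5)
Your proposal is correct, but your proof of the first assertion takes a genuinely different route from the paper's. The paper disposes of the Lie algebra structure in one line by citing the Ginzburg--Kapranov result (\cite[Coro.~2.2.9]{GK}; see also \cite[Prop.~7.6.5]{LV}) that the tensor product of a $\mathcal{P}$-algebra with a $\mathcal{P}^{\textup{!`}}$-algebra carries a commutator Lie bracket, together with Dzhumadil'daev's identification of the right Novikov operad as the Koszul dual of the Novikov operad; the grading and the Laurent-polynomial characterization are then read off exactly as you do, from $B_i\diamond B_j\subset B_{i+j}$ and Theorem~\ref{correspond1}. You instead verify the Jacobi identity directly: the Jacobiator of the commutator of $m(a\otimes b,a'\otimes b')=(a\circ a')\otimes(b\diamond b')$ equals the alternating sum over $S_3$ of associators (a classical identity, easily checked), the associator splits as $L\otimes L^{\diamond}-R\otimes R^{\diamond}$, the four defining axioms become exactly the four symmetries you list, and the two pairing arguments reduce both $\sum_{\sigma}\mathrm{sgn}(\sigma)\,L\otimes L^{\diamond}$ and $\sum_{\sigma}\mathrm{sgn}(\sigma)\,R\otimes R^{\diamond}$ to the common value $\sum_{\sigma}\mathrm{sgn}(\sigma)\,L\otimes R^{\diamond}$; I checked that both reductions go through as claimed (pairing by the transposition of slots $2,3$ using the symmetry of $L$ and of $L^{\diamond}-R^{\diamond}$, respectively by the transposition of slots $1,2$ using the symmetry of $R^{\diamond}$ and of $L-R$). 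What the paper's route buys is brevity and an explicit tie to the operadic Koszul-duality theme that motivates the whole paper; what yours buys is a self-contained, elementary argument that presupposes neither the operadic machinery nor the computation of the Koszul dual of the Novikov operad, and that exhibits concretely how each axiom is consumed --- indeed it amounts to an explicit proof of the Ginzburg--Kapranov statement in this instance. Your treatment of the converse coincides with the paper's: both reduce to Theorem~\ref{correspond1} after observing that the induced bracket on $A\otimes\bfk[t,t^{-1}]$ is precisely the Balinsky--Novikov bracket.
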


\begin{proof}
    By \cite[Coro.~2.2.9]{GK} (also see \cite[Prop.~7.6.5]{LV}), $(A\otimes B, [\cdot, \cdot])$ is a Lie algebra. Moreover, since $(B, \diamond)$ is $\ZZ$-graded, $(A\otimes B, [\cdot, \cdot])$ is a $\ZZ$-graded Lie algebra. When  $(B, \diamond)=( {\bf k}[t,t^{-1}], \diamond)$, this result follows from~Theorem \ref{correspond1} (that is
    \cite[Lemma~1]{BN}).
\end{proof}

Our main goal is to extend the Novikov algebra affinization to Novikov coalgebras and further to Novikov bialgebras. For the Novikov coalgebra affinization, we first need to find a coalgebra structure on the space of Laurent polynomials whose graded linear dual is the right Novikov algebra of Laurent polynomials in  Example
\ref{Laurent Novikov}. As the next simple result shows, this cannot be achieved by a usual coproduct.

\begin{lem} \label{l:laucop} Let $C=\bfk[x,x^{-1}]$ be the $\ZZ$-graded (by degree) space of Laurent polynomials and let $A=\bfk[t,t^{-1}]$ be the graded linear dual.
Then the right Novikov algebra product $\diamond$ on $A$ from
Example~\ref{Laurent Novikov} cannot be the induced product from the graded linear dual of any coproduct $\delta:C\to C\ot C$.
\end{lem}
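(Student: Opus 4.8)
The plan is to argue by contradiction. Suppose that $\diamond$ \emph{is} the product induced on the graded linear dual $A$ by some coproduct $\delta\colon C\to C\otimes C$. I would then compute the structure constants that such a $\delta$ is forced to carry and show that they have infinite support, which is incompatible with $\delta(x^k)$ being an element of the (uncompleted) tensor product $C\otimes C$.

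First I would fix the pairing. Write $\{x^i\}_{i\in\ZZ}$ for the standard basis of $C$ and $\{t^i\}_{i\in\ZZ}$ for the dual basis of the graded dual $A$, so that $\langle t^i,x^j\rangle=\delta_{ij}$; any degree shift between the two labelings merely reindexes the computation and does not affect the conclusion. By definition of the induced product, for all $i,j,k\in\ZZ$ one must have
$$\langle t^i\diamond t^j,\,x^k\rangle=\langle t^i\otimes t^j,\,\delta(x^k)\rangle.$$

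Next I would evaluate both sides. Recalling from Example~\ref{Laurent Novikov} that $t^i\diamond t^j=i\,t^{i+j-1}$, the left-hand side equals $i\,\delta_{i+j-1,k}$. Writing the (necessarily finite) expansion $\delta(x^k)=\sum_{p,q}c^k_{pq}\,x^p\otimes x^q$, the right-hand side equals $c^k_{ij}$, since $\langle t^i\otimes t^j,\,x^p\otimes x^q\rangle=\delta_{ip}\delta_{jq}$. Hence the coproduct is completely pinned down: $c^k_{ij}=i\,\delta_{i+j-1,k}$, equivalently
$$\delta(x^k)=\sum_{i\in\ZZ}i\,x^i\otimes x^{k+1-i}.$$

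The contradiction is then immediate, and this is the crux of the lemma: for every $i\neq 0$ the coefficient of $x^i\otimes x^{k+1-i}$ is the nonzero scalar $i$, so $\delta(x^k)$ has infinitely many nonzero terms and therefore cannot lie in $C\otimes C$ (it lives only in the completed tensor product $C\,\widehat{\otimes}\,C$). There is no genuine obstacle beyond this bookkeeping; the one point meriting a line of care is that the graded dual pairing is perfect on each (finite-dimensional) graded component, so that the displayed identity determines each $c^k_{ij}$ uniquely and no finitely supported choice of $\delta$ can reproduce $\diamond$.
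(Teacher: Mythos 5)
Your proposal is correct and is essentially identical to the paper's own proof: both dualize the product against the graded pairing to force the structure constants $c^k_{ij}=i\,\delta_{i+j-1,k}$, yielding $\delta(x^k)=\sum_{i\in\ZZ} i\,x^i\otimes x^{k+1-i}$, which has infinite support and hence cannot lie in $C\otimes C$. Your added remark that the perfectness of the pairing on each graded component pins down the coefficients uniquely (and that the degree-shift in the grading is immaterial) is a fair, if implicit, point in the paper's argument as well.
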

\begin{proof}
Suppose that such a coproduct $\delta$ exists. Denote
$\delta(x^i)=\sum\limits_{p,q\in \ZZ} c_{p,q} x^p\ot x^q$. Then
the graded linear duality
$\langle x^i,t^j\rangle=\delta_{i,j}$ gives the duality $
\langle \delta(x^i),t^j\ot t^k\rangle=\langle x^i,t^j\diamond t^k\rangle,$ yielding
$c_{j,k}=\delta_{i-j-k+1,0}j$. Thus
$$ \delta(x^i)=\sum_{i-p-q+1=0} \delta_{i-p-q+1,0}p x^p\ot x^q
= \sum_{p\in \ZZ} px^p\ot x^{i-p+1}.$$
This is an infinite sum and so cannot be defined by $\delta:C\to C\ot C$.
\end{proof}

Thus to carry out the Novikov coalgebra affinization, we need to extend the codomain of the coproduct $\delta:C\to C\ot C$ to allow infinite sums, in the spirit of topological coalgebras~\cite{Ta}.

Let $C=\oplus_{i\in \ZZ} C_i$ and $D=\oplus_{j\in \ZZ} D_j$ be $\ZZ$-graded vector spaces. We call the {\bf completed tensor product} of $C$ and $D$ to be the vector space
$$C\hatot  D\coloneqq \prod_{i,j\in\ZZ} C_i\ot D_j.
$$
If $C$ and $D$ are finite-dimensional, then $C\hatot  D$ is just the usual tensor product $C\otimes D$.

\begin{rmk}
We note that $C\hatot  D$ is the completion of $C\ot D$ with respect to the topology defined by the filtration $\Fil^n (C\ot D)\coloneqq \oplus_{|i|\geq n} C_i\ot D_j$~\cite{AMSZ,Bou,Ta}. For this reason, we call the above structure completed. We do not need further restrictions on the tensor product or topological coproduct in a topological vector space. In particular, we do not require that the coproducts $\delta$ (resp. $\Delta$) defined below be continuous.
\end{rmk}

In general, an element in $C\hatot  D$ is an infinite formal sum $\sum_{i,j\in\ZZ} \ts_{ij} $ with $\ts_{ij}\in C_i\ot D_j$. So $\ts_{ij}=\sum_\alpha c_{i\alpha}\ot d_{j\alpha}$ for pure tensors $c_{i\alpha}\ot d_{j\alpha}\in C_i\ot D_j$ with $\alpha$ in a finite index set.
Thus a general term of $C\hatot  D$ is a possibly infinite sum
\begin{equation}\label{eq:ssum}
 \sum_{i,j,\alpha} c_{i\alpha}\ot d_{j\alpha},
 \end{equation}
where $i,j\in \ZZ$ and $\alpha$ is in a finite index set (which might depend on $i,j$).

With these notations, for linear maps $f:C\to C'$ and $g:D\to D'$, define
$$ f\hatot g: C\hatot D \to C' \hatot D',
\quad \sum_{i,j,\alpha} c_{i\alpha}\ot d_{j\alpha}\mapsto  \sum_{i,j,\alpha} f(c_{i\alpha})\ot g(d_{j\alpha}).
$$
Also the twist map $\tau$ has its completion
$$ \widehat{\tau}: C\hatot C \to C\hatot C, \quad
\sum_{i,j,\alpha} c_{i\alpha} \ot d_{j\alpha} \mapsto \sum_{i,j,\alpha} d_{j\alpha}\ot c_{i\alpha}.
$$
Finally, we define a (completed) coproduct to be a linear map
$$ \Delta: C\to C\hatot C, \quad \Delta(a)
:=\sum_{i,j,\alpha} a_{1i\alpha}\ot a_{2j\alpha}.
$$
Then we have the well-defined map
$$ (\Delta \hatot \id)\Delta(a)=(\Delta \hatot \id)\Big(\sum_{i,j,\alpha} a_{1i\alpha}\ot a_{2j\alpha}\Big) :=\sum_{i,j,\alpha} \Delta(a_{1i\alpha})\ot a_{2j\alpha}
\in C\hatot C \hatot C.
$$

\begin{defi}
\begin{enumerate}
\item   A {\bf \complete right Novikov coalgebra} is a pair $(C,\Delta)$ where $C=\oplus_{i\in \ZZ}C_i$ is a $\mathbb{Z}$-graded vector space and
    $\Delta: C\rightarrow C\hatot  C$ is a linear map satisfying
    \begin{eqnarray}
        &&\label{top-Nov-Coalg-1}(\Delta \hatot  \id)\Delta(a)-(\id \hatot  \widehat{\tau})(\Delta \hatot  \id)\Delta(a)=(\id \hatot  \Delta)\Delta(a)-(\id \hatot  \widehat{\tau})(\id \hatot  \Delta )\Delta(a),\\
        &&\label{top-Nov-Coalg-2}(\id \hatot  \Delta)\Delta(a)=(\widehat{\tau}\hatot  \id) (\id \hatot  \Delta)\Delta(a)\;\;\;\tforall  a\in C.
    \end{eqnarray}
\item
A {\bf \complete Lie coalgebra}
 is a pair $(L, \delta)$, where
$L=\oplus_{i\in \ZZ} L_i$
  is a $\ZZ$-graded vector space and $\delta: L\rightarrow L\hatot L$ is a linear map satisfying
    \begin{eqnarray}
        \label{lia1} \notag
        &&\delta(a)=-\widehat{\tau} \delta(a), \\
        \label{lia2} \notag
        &&(\id\hatot \delta)\delta(a)-(\widehat{\tau}\hatot  \id)(\id\hatot  \delta)\delta(a)=(\delta\hatot  \id)\delta(a) ~~~\tforall  a\in L.
    \end{eqnarray}
\end{enumerate}
\end{defi}

In the above two definitions, when $C$ (resp. $L$)  is finite-dimensional, $(C, \Delta)$ (resp. $(L, \delta)$) is just a right Novikov coalgebra as the opposite of a Novikov coalgebra (resp. a Lie coalgebra).

\begin{ex}\label{Laurent-coproduct}
On the vector space $B\coloneqq {\bf k}[t,t^{-1}]$, define a linear map $\Delta_B: B\rightarrow B \hatot  B$ by
\begin{eqnarray}\label{eq:laurentco}
     \notag
\Delta_B(t^j)=\sum_{i\in \mathbb{Z}}(i+1)t^{-i-2}\otimes  t^{j+i} ~~~~\tforall  j\in \mathbb{Z}.
\end{eqnarray}
One can directly check that $(B={\bf k}[t,t^{-1}], \Delta_B)$ is a \complete right Novikov coalgebra.
\end{ex}

Now we give the dual version of Theorem~\ref{pp:tensorlie}.
\begin{thm}\label{Constr-Lie coalgebra}
    Let $(A,\Delta_A)$ be a Novikov coalgebra, $(B, \Delta_B)$ be a \complete right Novikov coalgebra and $L:=A\otimes B$.
     Define the linear map $\delta:
    L\rightarrow L \hatot  L$ by
    \begin{eqnarray}\label{GLiebi1}
        \delta(a\otimes b)=(\id_{L\widehat{\ot} L}-\widehat{\tau})\big(\Delta_A(a)\bullet \Delta_B(b)\big) \tforall a\in A, b\in B.
    \end{eqnarray}
Here for $\Delta_A(a)=\sum_{(a)}
a_{(1)}\otimes a_{(2)}$ in the Sweedler notation and $\Delta_B(b)
=\sum _{i,j,\alpha} b_{1i\alpha}\ot b_{2j\alpha}$ as in
Eq.~\eqref{eq:ssum}, we set
\begin{equation} \label{eq:multcoprod} \notag
\Delta_A(a)\bullet \Delta_B(b)\coloneqq \sum_{(a)}\sum_{i,j,\alpha} (a_{(1)}\otimes b_{1i\alpha})\otimes (a_{(2)}\otimes b_{2j\alpha}).
\end{equation}
Then $(L, \delta)$ is a \complete Lie coalgebra. Furthermore, if $(B={\bf k}[t,t^{-1}], \Delta_B)$ is the \complete right Novikov coalgebra given in Example \ref{Laurent-coproduct}, then $(L=A\otimes B, \delta)$ is a \complete Lie coalgebra if and only if $(A, \Delta_A)$ is a Novikov coalgebra.
\end{thm}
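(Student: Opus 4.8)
The plan is to deduce the theorem from its algebraic counterpart, Theorem~\ref{pp:tensorlie}, by passing to graded linear duals, exactly in the spirit of Lemma~\ref{l:laucop}. Since $A$ is finite-dimensional and $B=\oplus_{i\in\ZZ}B_i$ has finite-dimensional graded pieces, the graded linear duals $A^*$ and $B^*=\oplus_{i\in\ZZ}B_i^*$ are again finite-dimensional, respectively $\ZZ$-graded with finite-dimensional pieces. The completed tensor product is built precisely so that $L\hatot L$ pairs with $L^*\otimes L^*$: for homogeneous $\beta,\gamma\in B^*$ the grading condition $B_i\diamond B_j\subset B_{i+j}$ forces $\langle\Delta_B(b),\beta\otimes\gamma\rangle$ to reduce to a finite sum, so a completed coproduct $\Delta\colon C\to C\hatot C$ transposes to an honest (uncompleted) product on $C^*$, and conversely.

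First I would record the two dual dictionaries. Transposing \eqref{top-Nov-Coalg-1}--\eqref{top-Nov-Coalg-2} term by term shows that $(B,\Delta_B)$ is a completed right Novikov coalgebra if and only if the transpose product $\diamond$ makes $(B^*,\diamond)$ a $\ZZ$-graded right Novikov algebra; likewise the two completed Lie coalgebra axioms transpose exactly to skew-symmetry and the Jacobi identity, so $(L,\delta)$ is a completed Lie coalgebra if and only if $(L^*,[\cdot,\cdot])$ is a $\ZZ$-graded Lie algebra (the finite-dimensional case being the remark following the definition). Since $A$ is finite-dimensional, $(A,\Delta_A)$ is a Novikov coalgebra if and only if $(A^*,\cdot)$ is a Novikov algebra.

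The heart of the argument is to check that the cobracket $\delta$ of \eqref{GLiebi1} is exactly the transpose of the Lie bracket of Theorem~\ref{pp:tensorlie} applied to $A^*$ and $B^*$. Here the factor $(\id_{L\hatot L}-\widehat{\tau})$ dualizes the skew combination $a_1\circ a_2\otimes b_1\diamond b_2-a_2\circ a_1\otimes b_2\diamond b_1$, while the product $\Delta_A(a)\bullet\Delta_B(b)$ dualizes the componentwise multiplication on $A^*\otimes B^*$; matching Sweedler indices gives $\langle\delta(a\otimes b),\,(\varphi\otimes\beta)\otimes(\psi\otimes\gamma)\rangle=\langle a\otimes b,\,[\varphi\otimes\beta,\psi\otimes\gamma]\rangle$ for all homogeneous $\varphi,\psi\in A^*$ and $\beta,\gamma\in B^*$. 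With this identification, Theorem~\ref{pp:tensorlie} says $(A^*\otimes B^*,[\cdot,\cdot])=(L^*,[\cdot,\cdot])$ is a $\ZZ$-graded Lie algebra, so by the Lie dictionary $(L,\delta)$ is a completed Lie coalgebra, proving the first assertion. For the equivalence, take $B=\bfk[t,t^{-1}]$ with $\Delta_B$ as in Example~\ref{Laurent-coproduct}; its graded dual is the Laurent right Novikov algebra of Example~\ref{Laurent Novikov}. The ``if and only if'' clause of Theorem~\ref{pp:tensorlie} then gives that $(L^*,[\cdot,\cdot])$ is a $\ZZ$-graded Lie algebra if and only if $(A^*,\cdot)$ is a Novikov algebra, i.e. if and only if $(A,\Delta_A)$ is a Novikov coalgebra; dualizing back yields the stated equivalence.

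The step I expect to be the main obstacle is the bookkeeping in the duality itself: one must verify that every infinite formal sum produced by the completed tensor product pairs to a \emph{finite} number against homogeneous elements of $L^*$ (this is exactly what the grading guarantees, and why the uncompleted dual $C\to C\ot C$ fails in Lemma~\ref{l:laucop}), and that transposition intertwines $\widehat{\tau}$, $\id\hatot\delta$, $\delta\hatot\id$ with their algebraic analogues. Alternatively, one can bypass duality and verify the two completed Lie coalgebra axioms directly: co-skew-symmetry $\delta=-\widehat{\tau}\delta$ is immediate since $\delta=(\id_{L\hatot L}-\widehat{\tau})(\cdots)$ and $\widehat{\tau}^{2}=\id$, while the co-Jacobi identity follows by expanding $(\id\hatot\delta)\delta$, $(\delta\hatot\id)\delta$ and $(\widehat{\tau}\hatot\id)(\id\hatot\delta)\delta$ through \eqref{GLiebi1} and collecting terms using the Novikov coalgebra relations \eqref{Lc3}--\eqref{Lc4} for $A$ together with \eqref{top-Nov-Coalg-1}--\eqref{top-Nov-Coalg-2} for $B$; this is a finite computation on each graded component but considerably more laborious.
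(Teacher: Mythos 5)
Your reduction has a genuine gap in the forward (general $B$) direction, and it sits exactly where you locate the ``dual dictionary.'' You assert that $(B,\Delta_B)$ is a completed right Novikov coalgebra \emph{if and only if} the transposed product makes the graded dual $B^{*}=\oplus_{i}B_{i}^{*}$ a $\ZZ$-graded right Novikov algebra, and you then invoke Theorem~\ref{pp:tensorlie}. But the paper's definition of a completed right Novikov coalgebra imposes \emph{no} compatibility between $\Delta_B$ and the grading of $B$, and does not require the pieces $B_i$ to be finite-dimensional; both are hypotheses you silently add (they belong to the definition of a $\ZZ$-graded right Novikov \emph{algebra}, not of the completed coalgebra). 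Consequently the transposed product need not even close on the graded dual: nothing in \eqref{top-Nov-Coalg-1}--\eqref{top-Nov-Coalg-2} prevents infinitely many degrees $k$ from containing elements $b$ whose coproduct $\Delta_B(b)$ has a nonzero component in one fixed bidegree $(p,q)$ --- this happens, for instance, for an infinite direct sum of copies of Example~\ref{Laurent-coproduct} regraded with degree shifts $n$, which is a legitimate completed right Novikov coalgebra since no finite-dimensionality is required --- and then for $\beta\in B_p^{*}$, $\gamma\in B_q^{*}$ the functional $\beta\diamond\gamma$ is nonzero on infinitely many $B_k$, hence lies outside $\oplus_k B_k^{*}$, and iterated products such as $(\beta\diamond\gamma)\diamond\epsilon$, needed even to \emph{state} the right Novikov identities, are undefined. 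Even when closure holds, the paper's grading condition $B_p^{*}\diamond B_q^{*}\subset B_{p+q}^{*}$ can fail: regrade the Laurent example by a non-affine bijection $\sigma$ of $\ZZ$ (set $B_i:=\bfk t^{\sigma(i)}$); the completed coalgebra axioms are identities in a space of formal sums of monomials that is insensitive to this regrading, but the dual product is no longer graded, so $B^{*}$ is not a $\ZZ$-graded right Novikov algebra in the paper's sense and Theorem~\ref{pp:tensorlie} cannot be cited. The same defect blocks your transfer back from the Jacobi identity on $L^{*}$ to the co-Jacobi identity on $L$, since that adjunction requires the transposed bracket to stay inside the graded dual. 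So your argument proves the first assertion only under extra hypotheses on $(B,\Delta_B)$ --- essentially that $\Delta_B$ is the graded dual of a graded product with finite-dimensional pieces, as produced by Lemma~\ref{lem:cop} --- not in the stated generality. The paper closes this by the direct expansion of the co-Jacobi identity using \eqref{Lc3}--\eqref{Lc4} for $A$ and \eqref{top-Nov-Coalg-1}--\eqref{top-Nov-Coalg-2} for $B$; you mention that computation only as a laborious alternative and do not carry it out, so the gap remains open.

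By contrast, your treatment of the Laurent ``if and only if'' clause is sound, because there the dictionary is legitimate: the pieces are one-dimensional, the coproduct of Example~\ref{Laurent-coproduct} is by construction the graded dual of the product of Example~\ref{Laurent Novikov} (via $s^{p}\mapsto t^{-p-1}$), the direction you need --- co-Jacobi for $\delta$ transposes to Jacobi for the induced bracket on $A^{*}\ot\bfk[t,t^{-1}]$ --- is the safe one, and the ``only if'' clause of Theorem~\ref{pp:tensorlie} together with finite-dimensional duality for $A$ then yields that $(A,\Delta_A)$ is a Novikov coalgebra. This is genuinely different from, and tidier than, the paper's converse, which extracts \eqref{Lc3} and \eqref{Lc4} by comparing coefficients of $t^{-1}\ot t^{-1}\ot 1$ (at $k=2$) and $t^{-1}\ot t^{-3}\ot 1$ (at $k=0$) in the expanded identity. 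But since the main assertion concerns arbitrary completed right Novikov coalgebras, the proposal as a whole is incomplete.
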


\begin{proof}
   Obviously, $\delta=-\widehat{\tau }\,\delta$. For
    $\sum_{\ell}a'_\ell\otimes a''_\ell\otimes a'''_\ell\in A\otimes A\otimes A$ and
    $\sum_{i,j,k,\alpha}b'_{i\alpha}\otimes b''_{j\alpha}\otimes b'''_{k\alpha}\in B\hatot  B\hatot  B$, we denote
    \begin{eqnarray*}
    \Big(\sum_{\ell}a'_\ell\otimes a''_\ell\otimes a'''_\ell\Big)\bullet \Big(\sum_{i,j,k,\alpha }b'_{i\alpha}\otimes b''_{j\alpha}\otimes b'''_{k\alpha}\Big):=\sum_\ell \sum_{i,j,k,\alpha}(a'_\ell\otimes b'_{i\alpha})\otimes (a''_\ell\otimes b''_{j\alpha})\otimes (a'''_\ell\otimes  b'''_{k\alpha}).
    \end{eqnarray*}
Let $a\otimes b\in A\otimes B$. Applying the definition of $\delta$ and using the above notation, we obtain
    \small{\begin{eqnarray*}
        &&\Big((\id \hatot\delta)\delta-(\widehat{\tau}\hatot  \id)(\id \hatot \delta)\delta-(\delta \hatot  \id)\delta\Big)(a\otimes b)\\
        &=& \Big((\id \otimes \Delta_A)\Delta_A(a)\Big)\bullet \Big((\id \hatot  \Delta_B)\Delta_B(b)\Big)-\Big((\id\otimes \tau)(\id \otimes \Delta_A)\Delta_A(a)\Big) \bullet \Big((\id\hatot  \widehat{\tau})(\id \hatot  \Delta_B)\Delta_B(b)\Big) \\
        &&-\Big((\id \otimes \Delta_A)\tau \Delta_A(a)\Big) \bullet \Big((\id \hatot \Delta_B)\widehat{\tau} \Delta_B(b)\Big)
        +\Big((\id \otimes \tau)(\id \otimes \Delta_A)\tau \Delta_A(a)\Big) \bullet \Big((\id \hatot  \widehat{\tau})(\id \hatot \Delta_B)\widehat{\tau }\Delta_B(b)\Big)\\
        &&-\Big((\tau\otimes \id)(\id \otimes \Delta_A)\Delta_A(a)\Big)\bullet \Big((\widehat{\tau}\hatot  \id)(\id \hatot  \Delta_B)\Delta_B(b)\Big)\\
        &&+\Big((\tau\otimes \id)(\id\otimes \tau)(\id \otimes \Delta_A)\Delta_A(a) \Big)
        \bullet \Big((\widehat{\tau}\hatot  \id)(\id\hatot  \widehat{\tau})(\id \hatot  \Delta_B)\Delta_B(b)\Big)\\
        &&+\Big((\tau\otimes \id)(\id \otimes \Delta_A)\tau \Delta_A(a)\Big) \bullet \Big((\widehat{\tau}\hatot  \id)(\id \hatot \Delta_B)\widehat{\tau} \Delta_B(b)\Big)\\
        &&-\Big((\tau\otimes \id)(\id \otimes \tau)(\id \otimes \Delta_A)\tau \Delta_A(a)\Big) \bullet \Big((\widehat{\tau}\hatot  \id)(\id \hatot  \widehat{\tau})(\id \hatot \Delta_B)\widehat{\tau }\Delta_B(b)\Big)\\
        &&-\Big((\Delta_A\otimes \id)\Delta_A(a)\Big)\bullet \Big((\Delta_B\hatot  \id)\Delta_B(b)\Big)+\Big((\tau\otimes \id)(\Delta_A\otimes \id)\Delta_A(a)\Big) \bullet \Big((\tau\hatot \id)(\Delta_B\hatot  \id)\Delta_B(b)\Big)\\
        &&+\Big((\Delta_A\otimes \id)\tau \Delta_A(a)\Big) \bullet \Big((\Delta_B\hatot  \id)\tau \Delta_B(b)\Big)-\Big((\tau \otimes \id)(\Delta_A\otimes \id)\tau \Delta_A(a)\Big) \bullet \Big((\tau \hatot \id)(\Delta_B\hatot  \id)\tau \Delta_B(b)\Big).
    \end{eqnarray*}}
    By Eqs. (\ref{top-Nov-Coalg-1}) and (\ref{top-Nov-Coalg-2}), we obtain
\begin{eqnarray*}
        &&(\Delta_B\hatot  \id)\widehat{\tau} \Delta_B(b)=(\id \otimes \widehat{\tau})    (\id \hatot  \Delta_B)\Delta_B(b), \;\;\;(\widehat{\tau}\hatot  \id)(\Delta_B\hatot  \id)\Delta_B(b)=(\widehat{\tau}\hatot  \id)(\id\hatot  \widehat{\tau})(\id \hatot  \Delta_B)\Delta_B(b),\\
        &&(\id \hatot  \widehat{\tau})(\id\widehat{ \otimes} \Delta_B)\widehat{\tau} \Delta_B(b)=(\widehat{\tau}\hatot  \id)(\id\hatot  \widehat{\tau})(\id\hatot \Delta_B)\Delta_B(b)-(\Delta_B\hatot \id)\widehat{\tau} \Delta_B(b)+(\widehat{\tau}\hatot  \id)(\Delta_B\hatot  \id)\widehat{\tau} \Delta_B(b).
    \end{eqnarray*}
Then applying these equalities together with Eqs. (\ref{top-Nov-Coalg-1}) and (\ref{top-Nov-Coalg-2}), we get
\small{\begin{eqnarray*}
        &&\Big((\id \hatot \delta)\delta-(\widehat{\tau}\hatot  \id)(\id \hatot \delta)\delta-(\delta \hatot  \id)\delta\Big)(a\otimes b)\\
        &=&\!\!\! \Big((\id \otimes \Delta_A)\Delta_A(a)\Big)\bullet \Big( (\id \hatot  \Delta_B)\Delta_B(b)\Big)-\Big((\id\otimes \tau)(\id \otimes \Delta_A)\Delta_A(a)\Big) \bullet  \Big( (\id\hatot  \widehat{\tau})(\id \hatot  \Delta_B)\Delta_B(b) \Big)\\
        &&\!\!\!\!-\Big((\id \otimes \Delta_A)\tau \Delta_A(a)\Big) \bullet \Big((\id \hatot \Delta_B)\widehat{\tau} \Delta_B(b)\Big)
        +\Big((\id \otimes \tau)(\id \otimes \Delta_A)\tau \Delta_A(a)\Big) \bullet \Big((\widehat{\tau}\hatot  \id)(\id\hatot  \widehat{\tau})(\id\hatot \Delta_B)\Delta_B(b)\\
        &&-(\Delta_B\hatot \id)\widehat{\tau} \Delta_B(b)+(\widehat{\tau}\hatot  \id)(\Delta_B\hatot  \id)\widehat{\tau} \Delta_B(b)\Big)
        -\Big((\tau\otimes \id)(\id \otimes \Delta_A)\Delta_A(a)\Big)\bullet \Big((\widehat{\tau}\hatot  \id)(\id \hatot  \Delta_B)\Delta_B(b)\Big)\\
        &&+\Big((\tau\otimes \id)(\id\otimes \tau)(\id \otimes \Delta_A)\Delta_A(a)\Big) \bullet \Big((\widehat{\tau}\hatot  \id)(\id\hatot  \widehat{\tau})(\id \hatot  \Delta_B)\Delta_B(b)\Big)\\
        &&+\Big((\tau\otimes \id)(\id \otimes \Delta_A)\tau \Delta_A(a) \Big)\bullet \Big( (\widehat{\tau}\hatot  \id)(\id \hatot \Delta_B)\widehat{\tau} \Delta_B(b)\Big)\\
        &&-\Big((\tau\otimes \id)(\id \otimes \tau)(\id \otimes \Delta_A)\tau \Delta_A(a) \Big )\bullet \Big((\widehat{\tau}\hatot  \id)(\id \hatot  \widehat{\tau})(\id \hatot \Delta_B)\widehat{\tau }\Delta_B(b)\Big)\\
        &&-\Big((\Delta_A\otimes \id)\Delta_A(a)\Big)\bullet \Big((\id \hatot  \widehat{\tau})(\Delta_B\hatot  \id)\Delta_B(a)
        +(\id\hatot  \Delta_B)\Delta_B(b)-(\id \hatot \widehat{\tau})(\id\widehat{ \otimes} \Delta_B)\Delta_B(b)\Big)\\
        &&+\Big((\tau\otimes \id)(\Delta_A\otimes \id)\Delta_A(a)\Big) \bullet \Big((\widehat{\tau}\hatot  \id)(\id \hatot  \widehat{\tau})(\Delta_B\hatot  \id)\Delta_B(a)
        +(\widehat{\tau}\hatot  \id)(\id\hatot \Delta_B)\Delta_B(b)\\
        &&\hspace{4.5cm} -(\widehat{\tau}\hatot  \id)(\id \hatot \widehat{\tau})(\id \hatot \Delta_B)\Delta_B(b)\Big)\\
        &&+\Big((\Delta_A\otimes \id)\tau \Delta_A(a) \Big)\bullet \Big((\Delta_B\hatot  \id)\widehat{\tau} \Delta_B(b)\Big)
        -\Big((\tau \otimes \id)(\Delta_A\otimes \id)\tau \Delta_A(a)\Big) \bullet \Big( (\widehat{\tau} \hatot \id)(\Delta_B\hatot  \id)\widehat{\tau} \Delta_B(b)\Big)\\
        &=& \Big((\id \otimes \Delta_A)\Delta_A(a)-(\tau\otimes \id)(\Delta_A\otimes \id)\Delta_A(a)-(\Delta_A\otimes \id)\Delta_A(a)+(\tau\otimes \id)(\Delta_A\otimes \id)\Delta_A(a)\Big)\\
        &&\bullet \Big((\id\widehat{ \otimes} \Delta_B)\Delta_B(b)\Big)\\
        &&+\Big((\id \otimes \tau)(\id \otimes \Delta_A)\Delta_A(a)-(\id \otimes \tau)(\id \otimes \Delta_A)\tau \Delta_A(a)-(\Delta_A\otimes \id)\Delta_A(a)
        +(\Delta_A\otimes \id)\tau \Delta_A(a)\Big) \\
        &&\ \ \bullet \Big((\id \hatot \widehat{ \tau})(\id\widehat{ \otimes} \Delta_B)\Delta_B(b)\Big)\\
        &&-\Big((\id\otimes \Delta_A)\tau \Delta_A(a)-(\tau\otimes \id)(\Delta_A\otimes \id)\Delta_A(a)\Big)
        \bullet \Big((\id \hatot  \Delta_B)\widehat{\tau }\Delta_B(b)\Big)\\
        &&+\Big((\id \otimes \tau)(\id \otimes \Delta_A)\tau \Delta_A(a)-(\id \otimes \tau)(\tau\otimes \id)(\Delta_A\otimes \id)\Delta_A(a)\Big)
        \bullet \Big((\id \hatot \widehat{\tau})(\id \hatot \Delta_B)\widehat{\tau} \Delta_B(b)\Big)\\
        &&+\Big((\tau\otimes \id)(\id\otimes \Delta_A)\tau \Delta_A(a)-(\Delta_A\otimes \id)\Delta_A(a)\Big)\bullet \Big( (\widehat{\tau}\hatot  \id)(\id\hatot \Delta_B)\widehat{\tau }\Delta_B(b)\Big)\\
        &&+\Big((\id\otimes \tau)(\id\otimes \Delta_A)\tau \Delta_A(a)
        +(\tau\otimes \id)(\id\otimes \tau)(\id \otimes \Delta_A)\Delta_A(a)\\
        &&\ \ \ -(\tau\otimes \id)(\Delta_A\otimes \id)\Delta_A(a)
         -(\tau\otimes \id)(\Delta_A\otimes \id)\tau \Delta_A(a)\Big)\bullet \Big((\widehat{\tau}\hatot  \id)(\Delta_B\hatot  \id)\widehat{\tau} \Delta_B(b)\Big)\\
        &=&0.
    \end{eqnarray*}}
Therefore, $(L, \delta)$ is a \complete Lie coalgebra.

If $(B={\bf k}[t,t^{-1}], \Delta_B)$ is the \complete right
Novikov coalgebra given in Example \ref{Laurent-coproduct}, then
the corresponding $\delta$ is given by
 \begin{eqnarray}
\label{Lie-coproduct} \delta(at^k)=\sum_{i\in \mathbb{Z}}\sum_{(a)}(i+1)(a_{(1)}t^{-i-2}\otimes a_{(2)}t^{k+i}-a_{(2)}t^{k+i}\otimes a_{(1)}t^{-i-2}) ~~~~\tforall  a\in A, k\in \mathbb{Z}.
\end{eqnarray}

Suppose that $(\frakg, \delta)$ is a \complete Lie coalgebra. Then we have
\begin{eqnarray*}
0&=&(\id\hatot  \delta)\delta(at^k)-(\widehat{\tau}\widehat{\otimes} \id)(\id\hatot  \delta)\delta(at^k)-(\delta\hatot  \id)\delta(at^k)\\
&=&\sum_{i, j\in \mathbb{Z}}\sum_{(a)} \Big((i+1)(j+1){a_{(1)}}t^{-i-2}\otimes ({a_{(21)}}t^{-j-2}\otimes {a_{(22)}}t^{k+i+j}-{a_{(22)}}t^{k+i+j}\otimes {a_{(21)}}t^{-j-2})\\
&&-(i+1)(j+1)({a_{(2)}}t^{k+i}\otimes ({a_{(11)}}t^{-j-2}\otimes {a_{(12)}}t^{-i-2+j}- {a_{(12)}}t^{-i-2+j}\otimes {a_{(11)}}t^{-j-2}))\\
&&-(i+1)(j+1)({a_{(21)}}t^{-j-2}\otimes {a_{(1)}}t^{-i-2} \otimes {a_{(22)}}t^{k+i+j}-{a_{(22)}}t^{k+i+j}\otimes {a_{(1)}}t^{-i-2}\otimes  {a_{(21)}}t^{-j-2})\\
&&+(i+1)(j+1)({a_{(11)}}t^{-j-2}\otimes {a_{(2)}}t^{k+i}\otimes {a_{(12)}}t^{-i-2+j}-{a_{(12)}}t^{-i-2+j}\otimes {a_{(2)}}t^{k+i}\otimes  {a_{(11)}}t^{-j-2})\\
&&-(i+1)(j+1)({a_{(11)}}t^{-j-2}\otimes {a_{(12)}}t^{-i+j-2}-{a_{(12)}}t^{-i-2+j}\otimes {a_{(11)}}t^{-j-2})\otimes {a_{(2)}}t^{k+i}\\
&&+(i+1)(j+1)({a_{(21)}}t^{-j-2}\otimes {a_{(22)}}t^{k+i+j}-{a_{(22)}}t^{k+i+j}\otimes {a_{(21)}}t^{-j-2})\otimes {a_{(1)}}t^{-i-2}\Big).
\end{eqnarray*}
Let $k=2$. Comparing the coefficients of $t^{-1}\otimes t^{-1}\otimes 1$, we obtain
\begin{eqnarray*}
0=\sum_{(a)}(a_{(2)}\otimes a_{(12)}\otimes a_{(11)}-a_{(12)}\otimes a_{(2)}\otimes a_{(11)})= \tau_{13}((\tau\otimes \id)(\id \otimes \Delta_A)\tau \Delta_A(a)-(\Delta_A\otimes\id)\Delta_A(a)),
\end{eqnarray*}
where $\tau_{13}(a_1\otimes a_2\otimes a_3)=a_3\otimes a_2\otimes a_1$ for all $a_1$, $a_2$, $a_3\in A$.
Similarly, let $k=0$. Comparing the coefficients of $t^{-1}\otimes t^{-3}\otimes 1$,
we obtain
\begin{eqnarray*}
0&=&\sum_{(a)}(-a_{(22)}\otimes a_{(1)}\otimes a_{(21)}-a_{(12)}\otimes a_{(2)}\otimes a_{(11)}
+a_{(12)}\otimes a_{(11)}\otimes a_{(2)}+a_{(22)}\otimes a_{(21)}\otimes a_{(1)})\\
&=&\tau_{13}((\id\otimes \Delta_A)\Delta_A(a)-(\tau\otimes \id)(\id\otimes \Delta_A)\Delta_A(a)
-(\Delta_A\otimes \id)\Delta_A(a)+(\tau\otimes \id)(\Delta_A\otimes \id)\Delta_A(a))\\
&&+(\tau\otimes \id+\tau_{13})((\Delta_A\otimes\id)\Delta_A(a)-(\tau\otimes \id)(\id\otimes \Delta_A)\tau \Delta_A(a)).
\end{eqnarray*}
Therefore, Eqs.~\eqref{Lc3} and \eqref{Lc4} hold, that is, $(A, \Delta_A)$ is a Novikov coalgebra.

This completes the proof.
\end{proof}

If the $\ZZ$-graded right Novikov algebra $(B=\oplus_{i\in\ZZ}B_i, \diamond)$ is finite-dimensional, then Eq.~(\ref{GLiebi1}) is a finite sum.

Here is an example of a \complete Lie coalgebra obtained by Theorem \ref{Constr-Lie coalgebra}.
\begin{ex}
Let $(A={\bf k}e, \Delta_A)$ be the 1-dimensional Novikov coalgebra with $\Delta_A$ defined by
\begin{eqnarray*}
\Delta_A(e)=e\otimes e.
\end{eqnarray*}
Let $(B={\bf k}[t,t^{-1}], \Delta_B)$ be the \complete right Novikov coalgebra in Example \ref{Laurent-coproduct}. Then by Theorem \ref{Constr-Lie coalgebra}, there is a \complete Lie coalgebra $(L=A\otimes B, \delta)$ with the linear map $\delta$ given by
\begin{eqnarray*}
\delta(et^j)&=&\Delta_A(e)\bullet \Delta_B(t^j)=\sum_{i\in \mathbb{Z}}(i+1)et^{-i-2}\otimes et^{j+i}-\sum_{i\in \mathbb{Z}}(i+1)et^{j+i}\otimes et^{-i-2} \\
&=&\sum_{i\in \mathbb{Z}}(j+2i+2)et^{-i-2}\otimes et^{j+i}\;\;\;\;\tforall  j\in \mathbb{Z}.
\end{eqnarray*}
\end{ex}
\subsection{Lie bialgebras from Novikov bialgebras and quadratic right Novikov algebras}
\label{ss:novbiaff}

Now we introduce the last ingredients for the construction of Lie bialgebras from Novikov bialgebras.

\begin{defi}\label{def:quad}
A bilinear form
$(\cdot,\cdot)$ on a $\ZZ$-graded vector space $B=\oplus_{i\in \ZZ}B_i$ is called {\bf graded} if there exists some $m\in \ZZ$ such that
\begin{eqnarray*}
(B_i,B_j)=0 \;\;\; \text{for all $i$, $j\in \mathbb{Z}$ satisfying $i+j+m\neq 0$.}
\end{eqnarray*}
A graded bilinear form on a $\ZZ$-graded right Novikov algebra $(B=\oplus_{i\in \ZZ}B_i, \diamond)$ is called {\bf invariant} if it satisfies
\begin{eqnarray}\label{Rbilinear1}
(a\diamond b,c)=-(a, b\diamond c+c\diamond b)\;\;\tforall
a,b,c\in B.
\end{eqnarray}
A {\bf quadratic  $\ZZ$-graded right Novikov algebra}, denoted by $(B=\oplus_{i\in \ZZ}B_i, \diamond,
(\cdot,\cdot))$, is a $\ZZ$-graded right Novikov algebra $(B,\diamond)$ together
with a symmetric invariant nondegenerate graded bilinear form
$(\cdot,\cdot)$.  In particular, when $B=B_0$, it is simply called a  {\bf quadratic right Novikov algebra}.
\end{defi}
For a quadratic $\ZZ$-graded right Novikov algebra $(B=\oplus_{i\in \ZZ}B_i, \diamond, (\cdot, \cdot))$, the nondegenerate symmetric bilinear form $(\cdot,\cdot)$ induces bilinear forms $(\cdot,\cdot)_k, k\geq 2$, defined by
{\small
\begin{equation} \label{eq:pairb}
(\cdot,\cdot)_k: (\underbrace{B\hatot \cdots \hatot
B}_{k\text{-fold}}) \ot (\underbrace{B \ot \cdots \ot
B}_{k\text{-fold}}) \to \bfk,
    \Big(\hspace{-.3cm}\sum_{i_1,\cdots,i_k,\alpha} a_{1i_1\alpha}\ot \cdots \ot a_{ki_k\alpha}, b_1\ot \cdots \ot b_k\Big)_k\coloneqq \hspace{-.4cm}\sum_{i_1,\cdots,i_k,\alpha} \prod_{\ell=1}^k(a_{\ell i_\ell\alpha}, b_\ell)
\end{equation}
}
with the notation in Eq.~\eqref{eq:ssum} and homogeneous elements $b_i\in B$.
Further the forms are {\bf left nondegenerate} in the sense that if
$$\Big(\sum_{i_1,\cdots, i_k,\alpha} a_{1i_1\alpha}\ot \cdots \ot a_{ki_k\alpha}, b_1\ot \cdots \ot b_k\Big)_k=\Big(\sum_{j_1,\cdots, j_k,\beta} b_{1j_1\beta}\ot \cdots \ot b_{kj_k\beta}, b_1\ot \cdots \ot b_k\Big)_k
$$
for all homogeneous elements $b_1, \ldots,b_k\in B$, then
$$\sum_{i_1,\cdots, i_k,\alpha} a_{1i_1\alpha}\ot \cdots \ot a_{ki_k\alpha}=\sum_{j_1,\cdots, j_k,\beta} b_{1j_1\beta}\ot \cdots \ot b_{kj_k\beta}.
$$
For brevity, we will suppress the index $k$ since the meaning will be clear from the contexts.
\begin{rmk}
\begin{enumerate}
\item Suppose that $(B=\oplus_{i\in \ZZ}B_i, \diamond,
(\cdot,\cdot))$ is a  quadratic $\ZZ$-graded right Novikov algebra. Let $\{e_p\}_{p\in \Pi}$
be a basis of $B$ consisting of homogeneous elements.
Since $(\cdot, \cdot)$ induces an isomorphism of graded vector spaces $\varphi: B=\oplus_{i\in \ZZ}B_i\rightarrow \oplus_{i\in \ZZ}B_i^\ast$, we can always find its (graded) dual basis $\{f_q\}_{q\in \Pi}$, again consisting of homogeneous elements,
associated with $(\cdot, \cdot)$, that is, $(e_p, f_q)=\delta_{p,q}$ for all $p$, $q\in \Pi$.

\item Let a $\ZZ$-graded right Novikov algebra $(B=\oplus_{i\in
\ZZ}B_i, \diamond)$ have a unit $e$. Setting $b=e$ in
Eq.~\eqref{Rbilinear1} yields $3(a,c)=0$. Thus as long as the
characteristic of ${\bf k}$ is not $3$, the bilinear form is
trivial,  showing that there does not exist any non-trivial
bilinear form $(\cdot,\cdot)$ such that $(B, \diamond, (\cdot,
\cdot))$ is a unital quadratic $\ZZ$-graded right Novikov algebra.
\end{enumerate}
\end{rmk}

\begin{ex}\label{quadratic-example}
Let $(B=B_0, \diamond)$ be the $2$-dimensional right Novikov algebra
in~\cite{BaiMeng} with a basis $\{e_1,e_2\}$ whose multiplication is given by
\begin{eqnarray*}
e_1\diamond e_1=0,~~e_1\diamond e_2=-2e_1,~~e_2\diamond e_1=e_1,~~e_2\diamond e_2=e_2.
\end{eqnarray*}
Define a bilinear form $( \cdot ,\cdot )$ on $B$ by
$(e_1,e_1)=(e_2,e_2)=0,~~(e_1,e_2)=(e_2,e_1)=1.$
This bilinear form is evidently nondegenerate symmetric and invariant, showing that $(B, \diamond, (\cdot,\cdot ))$ is a quadratic right Novikov algebra.
\end{ex}
\begin{ex}\label{Laurent-Bilinear}
Let $(B=\oplus_{i\in \mathbb{Z}}{\bf k}t^i, \diamond)$ be the $\mathbb{Z}$-graded right Novikov algebra given in Example \ref{Laurent Novikov}.
Define a bilinear form $(\cdot,\cdot)$ on $B$ by
\begin{eqnarray}\label{Laurent-Bilinear-1}
(t^i,t^j)=\delta_{i+j+1,0}\;\;\text{for all $i$, $j\in \mathbb{Z}$.}
\end{eqnarray}
It is directly checked that $(B={\bf k}[t,t^{-1}], \diamond, (\cdot, \cdot))$
is a quadratic $\mathbb{Z}$-graded right Novikov algebra.
\end{ex}
\begin{lem}\label{lem:cop}
Let $(B=\oplus_{i\in \ZZ}B_i, \diamond, (\cdot, \cdot))$ be a quadratic $\ZZ$-graded right Novikov algebra. Let $\Delta_B:B\rightarrow B\hatot B$ be the dual of $\diamond$ under the left nondegenerate bilinear form in Eq.~\eqref{eq:pairb}, so that
\begin{equation}\label{eq:coproduct-self}
(\Delta_B(a), b\otimes c)=(a, b\diamond c) ~~~~~\tforall  a,b,c\in B.
\end{equation}
Then $(B,\Delta_B)$ is a \complete right Novikov coalgebra.
\end{lem}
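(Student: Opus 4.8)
The plan is to prove the two completed right Novikov coalgebra axioms \eqref{top-Nov-Coalg-1} and \eqref{top-Nov-Coalg-2} by dualizing the two defining identities of the right Novikov algebra $(B,\diamond)$, exploiting that $\Delta_B$ is the transpose of $\diamond$ under the left nondegenerate pairing \eqref{eq:pairb}. As a preliminary, I would record that $\Delta_B$ is well defined as a map into the \emph{completed} tensor product $B\hatot B$: since $\diamond$ is graded and $(\cdot,\cdot)$ is a nondegenerate graded form, for a homogeneous $a\in B_n$ the element prescribed by Eq.~\eqref{eq:coproduct-self} has in each bidegree $(i,j)$ a finite, uniquely determined component, while infinitely many bidegrees may contribute at once. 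This is exactly the situation that $B\hatot B=\prod_{i,j}B_i\ot B_j$ is built to accommodate, so $\Delta_B(a)\in B\hatot B$.

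The technical heart is to establish the adjunction formulas computing the pairing of the iterated coproducts against a homogeneous pure tensor $b\ot c\ot d$. Writing $\Delta_B(a)=\sum a_{(1)}\ot a_{(2)}$ and applying Eq.~\eqref{eq:coproduct-self} twice, one obtains
\begin{align*}
\big((\id\hatot\Delta_B)\Delta_B(a),\,b\ot c\ot d\big)&=(a,\,b\diamond(c\diamond d)),\\
\big((\Delta_B\hatot\id)\Delta_B(a),\,b\ot c\ot d\big)&=(a,\,(b\diamond c)\diamond d),
\end{align*}
while the completed twist maps translate into permutations of the pairing arguments, namely $\big((\id\hatot\widehat{\tau})X,\,b\ot c\ot d\big)=(X,\,b\ot d\ot c)$ and $\big((\widehat{\tau}\hatot\id)X,\,b\ot c\ot d\big)=(X,\,c\ot b\ot d)$.

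Plugging these into the axioms reduces each one to a single right Novikov identity. For \eqref{top-Nov-Coalg-2}, the two sides pair against $b\ot c\ot d$ to $(a,b\diamond(c\diamond d))$ and $(a,c\diamond(b\diamond d))$, so the axiom is precisely the transpose of the second right Novikov identity $b\diamond(c\diamond d)=c\diamond(b\diamond d)$. For \eqref{top-Nov-Coalg-1}, the four terms pair to $(a,(b\diamond c)\diamond d)$, $(a,(b\diamond d)\diamond c)$, $(a,b\diamond(c\diamond d))$ and $(a,b\diamond(d\diamond c))$, so the axiom amounts to the transpose of $(b\diamond c)\diamond d-(b\diamond d)\diamond c=b\diamond(c\diamond d)-b\diamond(d\diamond c)$, which is a rearrangement of the first right Novikov identity. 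In each case the identity in $B$ makes the pairings against \emph{every} homogeneous $b\ot c\ot d$ agree, whereupon the left nondegeneracy of the induced pairing \eqref{eq:pairb} (for $k=3$) forces the corresponding two elements of $B\hatot B\hatot B$ to coincide.

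I expect the main obstacle to be bookkeeping rather than anything conceptual. One must take care that the infinite completed sums are manipulated legitimately, which is guaranteed because pairing against a fixed homogeneous $b\ot c\ot d$ isolates finitely many terms by the gradedness of the form, and that the adjunction formulas and the twist translations are applied with the correct ordering of tensor slots. It is worth emphasizing that neither the symmetry nor the invariance of $(\cdot,\cdot)$ is needed for this statement—they enter only later, for the Lie bialgebra structure; here only nondegeneracy, gradedness, and the two right Novikov axioms are used.
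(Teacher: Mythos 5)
Your proof is correct and takes essentially the same route as the paper's: pair each completed coalgebra axiom against homogeneous pure tensors, use the iterated adjunction $(\Delta_B(a), b\otimes c)=(a, b\diamond c)$ together with the twist-as-permutation rule to convert every term into a right Novikov expression paired against $a$, and conclude by left nondegeneracy of the induced pairing on the three-fold completed tensor product. The only differences are cosmetic: you spell out the well-definedness of $\Delta_B$ and verify the second axiom explicitly, whereas the paper writes out only the first axiom and dispatches the second as ``similarly verified.''
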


\begin{proof}
Note that for any $c_1\otimes c_2\otimes c_3\in B_i \otimes B_j \otimes B_k$, by the definition of $\Delta_B$, we get
\begin{eqnarray*}
\Big((\Delta_B\hatot \id)\Delta_B(a)-(\id\hatot  \widehat{\tau})(\Delta_B\hatot  \id)\Delta_B(a)-(\id\hatot  \Delta_B)\Delta_B(a)+(\id\hatot  \widehat{\tau})(\id \hatot  \Delta_B)\Delta_B(a), c_1\otimes c_2\otimes c_3\Big)\\
=(a, (c_1\diamond c_2)\diamond c_3-(c_1\diamond c_3)\diamond c_2-c_1\diamond (c_2\diamond c_3)+c_1\diamond (c_3\diamond c_2)))=0.
\end{eqnarray*}
Then the nondegeneracy of $(\cdot,\cdot)$ gives Eq. (\ref{top-Nov-Coalg-1}). Eq.~\eqref{top-Nov-Coalg-2} is similarly verified, completing the proof.
\end{proof}

\begin{rmk}
By Example \ref{Laurent-Bilinear}, the pair $(B={\bf
k}[t,t^{-1}], \diamond, (\cdot, \cdot))$ with $(\cdot, \cdot)$
given by Eq. (\ref{Laurent-Bilinear-1}) is a quadratic
$\mathbb{Z}$-graded right Novikov algebra. A direct calculation
shows that the pair $(B, \Delta_B)$ obtained from $(B={\bf
k}[t,t^{-1}], \diamond, (\cdot, \cdot))$ by Lemma \ref{lem:cop}
coincides with the \complete right Novikov coalgebra given
in Example \ref{Laurent-coproduct}.
\end{rmk}

We now give the notion and results on \complete Lie bialgebras.
\begin{defi}
A {\bf \complete Lie bialgebra}
is a triple $(L, [\cdot,\cdot],\delta)$ such that
$(L, [\cdot, \cdot])$ is a Lie algebra, $(L,
\delta)$ is a \complete Lie coalgebra, and the following
compatibility condition holds.
\begin{eqnarray} \label{lia3}
\notag \delta([a,b])=(\ad_a\hatot  \id+\id \hatot
\ad_a)\delta(b)-(\ad_b\hatot  \id+\id \hatot
\ad_b)\delta(a)\;\;\tforall  a, b\in L,
\end{eqnarray}
where $\ad_a (b)=[a,b]$ for all $a, b\in
L$.
\end{defi}
\begin{thm}\label{Gcorrespond6} Let $(A,\circ,\Delta_A)$ be a Novikov bialgebra and $(B=\oplus_{i\in \ZZ}B_i, \diamond, (\cdot, \cdot))$ be a
quadratic $\ZZ$-graded right Novikov algebra. Let
$L=A\otimes B$ be the induced Lie algebra from $(A,
\circ)$ and $(B, \diamond)$ in Theorem~\ref{pp:tensorlie}, $\Delta_B:B\rightarrow
B\hatot B$ be the linear map defined by
Eq.~\eqref{eq:coproduct-self}, and $\delta:L\rightarrow L\widehat{ \otimes }L$ be the linear map defined in Eq.~\eqref{GLiebi1}$:$
\begin{eqnarray}\label{GLiebi1b} \notag
\delta(a\otimes b)=(\id_{L\widehat{\ot} L}-\widehat{\tau})\big(\Delta_A(a)\bullet \Delta_B(b)\big) \quad \tforall  a\in A, b\in B.
\end{eqnarray}
Then $(L, [\cdot, \cdot], \delta)$ is a \complete Lie bialgebra. Further, if $(B, \diamond, (\cdot, \cdot))=({\bf k}[t,t^{-1}], \diamond, (\cdot, \cdot))$ is the quadratic $\mathbb{Z}$-graded right Novikov algebra given in Example \ref{Laurent-Bilinear}, then the converse also holds.
\end{thm}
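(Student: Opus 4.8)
The plan is to treat the Lie algebra and the completed Lie coalgebra structures as already given and to concentrate all the real work on the cocycle compatibility \eqref{lia3}. Indeed, the Lie algebra structure on $L=A\otimes B$ is supplied by Theorem~\ref{pp:tensorlie}, while the completed Lie coalgebra structure on $(L,\delta)$ follows by combining Lemma~\ref{lem:cop}, which shows that the dual coproduct $(B,\Delta_B)$ is a completed right Novikov coalgebra, with Theorem~\ref{Constr-Lie coalgebra}. Thus both the Lie algebra axioms and the completed Lie coalgebra axioms hold automatically, and it remains only to verify the compatibility condition \eqref{lia3}.

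For the forward direction, I would evaluate both sides of \eqref{lia3} on a pair of homogeneous generators $a_1\otimes b_1$ and $a_2\otimes b_2$. On the left side, expanding via the bracket formula of Theorem~\ref{pp:tensorlie} and the definition \eqref{GLiebi1} of $\delta$ produces terms built from $(\id-\widehat{\tau})\big(\Delta_A(a_1\circ a_2)\bullet\Delta_B(b_1\diamond b_2)\big)$ and its $a_1\leftrightarrow a_2$, $b_1\leftrightarrow b_2$ swap. On the right side, each $\ad_{a_i\otimes b_i}$ acts in each tensor factor through the bracket, so it contributes both an $L_A$/left-$\diamond$ term and an $R_A$/right-$\diamond$ term, interleaved with $\tau$-twists. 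The heart of the matching is to substitute the Novikov bialgebra relations \eqref{Lb5}, \eqref{Lb6}, \eqref{Lb7} into the $A$-components while simultaneously using the invariance \eqref{Rbilinear1} of the bilinear form on $B$ which, because $\Delta_B$ is dual to $\diamond$, translates into compatibility identities relating $\Delta_B$ and $\diamond$ for the $B$-components. Each of the three defining relations of the Novikov bialgebra pairs with one dualized right Novikov identity in $B$ to cancel a block of terms, so that the two sides agree.

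For the converse, with $(B,\diamond,(\cdot,\cdot))=(\bfk[t,t^{-1}],\diamond,(\cdot,\cdot))$, the coproduct $\delta$ is given explicitly by Eq.~\eqref{Lie-coproduct}. Here the fact that $(L,\delta)$ is a completed Lie coalgebra already forces $(A,\Delta_A)$ to be a Novikov coalgebra by the converse half of Theorem~\ref{Constr-Lie coalgebra}, and the fact that $(L,[\cdot,\cdot])$ is a Lie algebra forces $(A,\circ)$ to be a Novikov algebra by Theorem~\ref{pp:tensorlie}; so only the three conditions \eqref{Lb5}, \eqref{Lb6}, \eqref{Lb7} remain to be extracted. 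I would expand \eqref{lia3} on generators $a_1 t^{k_1}$ and $a_2 t^{k_2}$, assemble the resulting doubly-infinite sum in $L\hatot L$, and then compare coefficients of well-chosen monomials $t^p\otimes t^q$ in the $B$-slots after specializing $k_1,k_2$ to convenient low values, exactly as in the coefficient-extraction used in the proof of Theorem~\ref{Constr-Lie coalgebra}. Choosing three independent monomials recovers \eqref{Lb5}, \eqref{Lb6} and \eqref{Lb7} in turn.

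The main obstacle will be the size and bookkeeping of the forward computation: tracking how the $\tau$-twists on the $A$-side interleave with the $\widehat{\tau}$-twists on the $B$-side, and identifying precisely which combination of \eqref{Lb5}--\eqref{Lb7} together with which dualized invariance identity annihilates each block of terms. The conceptual point is that the quadratic (self-dual) structure on $B$ is exactly what makes the dualized $\diamond$-relations line up with the three Novikov bialgebra relations; once this correspondence is pinned down, the verification reduces to lengthy but routine algebra.
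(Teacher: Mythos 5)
Your proposal is correct and follows essentially the same route as the paper: the Lie algebra and completed Lie coalgebra structures come from Theorem~\ref{pp:tensorlie}, Lemma~\ref{lem:cop} and Theorem~\ref{Constr-Lie coalgebra}, the forward cocycle verification is done by translating the invariance of $(\cdot,\cdot)$ into identities relating $\Delta_B$ and $\diamond$ and then cancelling blocks via Eqs.~\eqref{Lb5}--\eqref{Lb7}, and the converse is obtained exactly as you describe, by invoking the converse halves of Theorems~\ref{correspond1} and \ref{Constr-Lie coalgebra} and then extracting \eqref{Lb5}--\eqref{Lb7} from the cocycle identity by comparing coefficients of monomials $t^p\otimes t^q$ at low values of the exponents.
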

\begin{proof}
By Lemma~\ref{lem:cop},
$(B,\Delta_B)$ is a \complete right Novikov coalgebra. Then by
Theorem \ref{Constr-Lie coalgebra}, $(L, \delta)$
is a \complete Lie coalgebra.

Let $a\otimes b, c\otimes d\in A\otimes B$ with $b\in B_i, d\in B_j$. We obtain
$$\delta([a\otimes b, c\otimes d])\\
=(\id_{L\widehat{\ot} L}-\widehat{\tau})\big(\Delta_A(a\circ c)\bullet \Delta_B(b\diamond d)\big)-(\id_{L\widehat{\ot} L}-\widehat{\tau})\big(\Delta_A(c\circ a)\bullet \Delta_B(d\diamond b)\big)
$$
and
{\small
\begin{eqnarray*}
&&(\ad_{(a\otimes b)}\hatot  \id+\id\hatot \ad_{(a\otimes b)}) \delta(c\otimes d)\\
&=&(\id_{L\widehat{\ot} L}-\widehat{\tau})\Big(\sum_{(c)}\sum_{i,j,\alpha}(a\circ c_{(1)}\otimes c_{(2)})\bullet(b \diamond d_{1i\alpha}\otimes d_{2j\alpha})-\sum_{(c)}\sum_{i,j,\alpha}(c_{(1)}\circ a\otimes c_{(2)})\bullet ( d_{1i\alpha} \diamond b\otimes d_{2j\alpha})\\
&&+\sum_{(c)}\sum_{i,j,\alpha}(c_{(1)}\otimes a\circ c_{(2)})\bullet ( d_{1i\alpha}\otimes b\diamond d_{2j\alpha})-\sum_{(c)}\sum_{i,j,\alpha}(c_{(1)}\otimes c_{(2)}\circ a)\bullet ( d_{1i\alpha}\otimes d_{2j\alpha}\diamond b)\Big).
\end{eqnarray*}
}
Let $e\in B_{k}$ and $f\in B_{l}$. Note that
{\small
\begin{eqnarray*}
&&(\widehat{\tau} \Delta_B(b\diamond d), e\otimes f)=\Big(\sum_{i,j,\alpha} (b\diamond d)_{2j\alpha}\hatot  (b\diamond d)_{1i\alpha}, e\otimes f\Big)=(b\diamond d, f\diamond e)=(d, b\diamond (f\diamond e)),\\
&&\Big(\sum_{i,j,\alpha} (b\diamond d_{1i\alpha}\otimes d_{2j\alpha}), e\otimes f\Big)=\sum_{i,j,\alpha} (d_{1i\alpha}\otimes d_{2j\alpha}, ( b\diamond e)\otimes f)=(d, (b\diamond e)\diamond f)\\
&&=\Big(d, (b\diamond f)\diamond e+b\diamond (e\diamond f)-b\diamond (f\diamond e)\Big).
\end{eqnarray*}
}
By the nondegeneracy of $(\cdot,\cdot)$, we have
\begin{equation}
\label{bialg-eq1}\sum_{i,j,\alpha} (b\diamond d_{1i\alpha}\otimes d_{2j\alpha})=-\widehat{\tau} \Delta_B(b\diamond d)+\sum_{i,j,\alpha} g_{i\alpha}\otimes h_{j\alpha},
\end{equation}
where $\sum_{i,j,\alpha} g_{i\alpha}\otimes h_{j\alpha}\in B\hatot  B$ is chosen so that $(\sum_{i,j,\alpha} g_{i\alpha}\otimes h_{j\alpha}, e\otimes f)=(d, (b\diamond f)\diamond e+b\diamond (e\diamond f))$.
Similarly, we obtain
{\small
\begin{eqnarray}
\label{bialg-eq2}&&\sum_{i,j,\alpha} (d_{1i\alpha}\otimes b\diamond d_{2j\alpha})=\Delta_B(b\diamond d),\;\;\sum_{i,j,\alpha} (d_{1i\alpha}\otimes d_{2j\alpha}\diamond b)=-\Delta_B(b\diamond d)-\sum_{i,j,\alpha} k_{i\alpha}\otimes l_{j\alpha},\\
\label{bialg-eq3}&&\sum_{i,j,\alpha} (d_{1i\alpha}\diamond b\otimes d_{2j\alpha} )
=\widehat{\tau}\Delta_B(b\diamond d)-\sum_{i,j, \alpha}g_{i\alpha}\otimes h_{j\alpha}+\Delta_B(b\diamond d)+\sum_{i,j,\alpha} k_{i\alpha}\otimes l_{j\alpha},\\
\label{bialg-eq5}&&\sum_{i,j,\alpha} (b_{1i\alpha}\otimes d \diamond b_{2j\alpha})=\Delta_B(d\diamond b),\;\;\sum_{i,j,\alpha} (d \diamond b_{1i\alpha}\otimes b_{2j\alpha})=-\widehat{\tau} \Delta_B(d\diamond b)+\sum_{i,j,\alpha} g_{i\alpha}\otimes h_{j\alpha},\\
\label{bialg-eq7}&&\sum_{i,j,\alpha} (b_{1i\alpha}\diamond d\otimes b_{2j\alpha})
=\widehat{\tau} \Delta_B(d\diamond b)-\sum_{i,j,\alpha}g_{i\alpha}\otimes h_{j\alpha}+\Delta_B(d\diamond b)+\sum_{i,j,\alpha} k_{i\alpha}\otimes l_{j\alpha},\\
\label{bialg-eq8}&&\sum_{i,j,\alpha} (b_{1i\alpha}\otimes b_{2j\alpha}\diamond d)=-\Delta_B(d\diamond b)-\sum_{i,j,\alpha} k_{i\alpha}\otimes l_{j\alpha},
\end{eqnarray}}
where $\sum_{i,j,\alpha} k_{i\alpha}\otimes l_{j\alpha}\in B\hatot  B$ is chosen so that $(\sum_{i,j,\alpha} k_{i\alpha}\otimes l_{j\alpha}, e\otimes f)=(d, e\diamond (f\diamond b))$.

Applying Eqs. (\ref{bialg-eq1})-(\ref{bialg-eq8}) and Eqs. (\ref{Lb5})-(\ref{Lb7}), we obtain
\small{
\begin{eqnarray*}
&&\delta([a\otimes b, c\otimes d])-(\ad_{(a\otimes b)}\hatot  \id+\id\hatot \ad_{(a\otimes b)})\delta(c\otimes d)+(\ad_{(c\otimes d)}\hatot  \id+\id\hatot \ad_{(c\otimes d)})\delta(a\otimes b)\\
&=&(\id_{L\widehat{\ot} L}-\widehat{\tau})((\Delta_A(a\circ c)-\sum_{(c)}c_{(1)}\otimes (a\star c_{(2)})- \sum_{(c)} c_{(2)}\otimes (a\star c_{(1)})-\sum_{(a)} a_{(1)}\circ c\otimes a_{(2)})\bullet \Delta_B(b\diamond d))\\
&&-(\id_{L\widehat{\ot} L}-\widehat{\tau})((\Delta_A(c\circ a)-\sum_{(c)} c_{(1)}\circ a\otimes c_{(2)}- \sum_{(a)} a_{(1)}\otimes (c\star a_{(2)})-\sum_{(a)} a_{(2)}\otimes c\star a_{(2)})\bullet \Delta_B(d\diamond b))\\
&&-\sum_{(c)}(a\star c_{(1)}\otimes c_{(2)}-c_{(2)}\otimes a\star c_{(1)}-c\star a_{(1)}\otimes a_{(2)}+a_{(2)}\otimes c\star a_{(1)})\bullet \sum_{i,j,\alpha}g_{i\alpha}\otimes h_{j\alpha}\\
&&+\sum_{(c)}(c_{(1)}\circ a\otimes c_{(2)}-c_{(1)}\otimes c_{(2)}\circ a+c_{(2)}\circ a\otimes c_{(1)}-c_{(2)}\otimes c_{(1)}\circ a\\
&&-a_{(1)}\circ c\otimes  a_{(2)}+a_{(1)}\otimes a_{(2)}\circ c-a_{(2)}\circ c\otimes a_{(1)}+a_{(2)}\otimes a_{(1)}\circ c)\bullet \sum_{i,j,\alpha}k_{i\alpha}\otimes l_{j\alpha}
= 0.
\end{eqnarray*}
}
Therefore, $(L, [\cdot, \cdot], \delta)$ is a \complete Lie bialgebra.

If $(B, \diamond, (\cdot, \cdot))=({\bf k}[t,t^{-1}], \diamond, (\cdot, \cdot))$
 and $(L, [\cdot, \cdot], \delta)$ is a \complete Lie bialgebra, then
 $(A, \circ)$ is a Novikov algebra and $(A, \Delta_A)$ is a Novikov coalgebra by Theorems \ref{correspond1} and \ref{Constr-Lie coalgebra} respectively. Note that in this case, $\delta$ is given by Eq. (\ref{Lie-coproduct}).
 Then we only need to verify Eqs. (\ref{Lb5})-(\ref{Lb7}).

We compute
\begin{eqnarray*}
0&=&\delta([at^j, bt^k])-(\ad_{at^j}\widehat{\otimes}\id+\id\widehat{\otimes} \ad_{at^j}) \delta(bt^k)+(\ad_{bt^k}\widehat{\otimes}\id+\id\widehat{\otimes} \ad_{bt^k}) \delta(at^j)\\
&=&\sum_{i\in \mathbb{Z}}\sum_{(a\circ b)}\Big(j(i+1)({(a\circ b)_{(1)}}t^{-i-2}\otimes {(a\circ b)_{(2)}}t^{j+k+i-1}-{(a\circ b)_{(2)}}t^{j+k+i-1}\otimes{(a\circ b)_{(1)}}t^{-i-2})\\
&&-k(i+1)({(b\circ a)_{(1)}}t^{-i-2}\otimes {(b\circ a)_{(2)}}t^{j+k+i-1}-{(b\circ a)_{(2)}}t^{j+k+i-1}\otimes {(b\circ a)_{(1)}}t^{-i-2})\Big)\\
&&- \sum_{i\in \mathbb{Z}}\sum_{(b)}\Big((i+1)(j(a\circ b_{(1)})t^{j-i-3}+(i+2)(b_{(1)}\circ a)t^{j-i-3})\otimes {b_{(2)}}t^{k+i}\\
&&+(i+1)b_{(1)}t^{-i-2}\otimes (j(a\circ b_{(2)})t^{j+k+i-1}-(k+i)(b_{(2)}\circ a)t^{j+k+i-1})\\
&&-(i+1)(j(a\circ b_{(2)})t^{j+k+i-1}-(k+i)(b_{(2)}\circ a)t^{j+k+i-1})
\otimes b_{(1)}t^{-i-2}\\
&&-(i+1)b_{(2)}t^{k+i}\otimes (j(a\circ b_{(1)})t^{j-i-3}+(i+2)(b_{(1)}\circ a)t^{j-i-3})\Big)\\
&&+\sum_{i\in \mathbb{Z}}\sum_{(a)}\Big((i+1)(k(b\circ a_{(1)})t^{k-i-3}+(i+2)(a_{(1)}\circ b)t^{k-i-3})\otimes a_{(2)}t^{j+i}\\
&&+(i+1)a_{(1)}t^{-i-2}\otimes (k(b\circ a_{(2)})t^{j+k+i-1}-(j+i)(a_{(2)}\circ b)t^{j+k+i-1})\\
&&-(i+1)(k(b\circ a_{(2)})t^{j+k+i-1}-(j+i)(a_{(2)}\circ b)t^{j+k+i-1})\otimes a_{(1)}t^{-i-2}\\
&&-(i+1)a_{(2)}t^{j+i}\otimes (k(b\circ a_{(1)})t^{k-i-3}+(i+2)(a_{(1)}\circ b)t^{k-i-3})\Big).
\end{eqnarray*}
Let $k=0$ and $j=2$. Comparing the coefficients of $t^{-1}\otimes 1$, we obtain
\begin{eqnarray}\label{Nov-bialgebra-1} \notag
\tau \Delta_A(a\circ b)=(L_{A,\star}(a)\otimes \id)(\Delta_A(b)+\tau\Delta_A(b))+(\id\otimes R_A(b))\tau\Delta_A(a) \;\;\;\tforall  a, b \in A.
\end{eqnarray}
Then we obtain Eq.~\eqref{Lb5}.
Similarly, setting $j=k=0$ and comparing the coefficients of $t^{-3}\otimes 1$ yield Eq. (\ref{Lb7}); setting $j=k=1$ and comparing the coefficients of $t^{-1}\otimes 1$ by Eq.~\eqref{Lb5} give Eq. (\ref{Lb6}).

This completes the proof.
\end{proof}
\begin{rmk}\label{rk:char}
\begin{enumerate}
\item
Theorem~\ref{Gcorrespond6} shows that the notion of the Novikov bialgebra can be characterized by the condition that its ``affinization" with the quadratic $\ZZ$-graded right Novikov algebra $(\bfk[t,t^{-1}],\diamond, (\cdot,\cdot))$ is a Lie bialgebra. As in the case of Novikov algebras (Theorem~\ref{correspond1}), this characterization on the one hand gives a general construction of infinite-dimensional Lie bialgebras. On the other hand, it shows the significance of the Novikov bialgebra.
\item In view of the operadic interpretation of Theorem~\ref{correspond1}, that the left and right Novikov algebras are the Koszul dual of each other~\cite{GK,LV}, Theorem~\ref{Gcorrespond6} suggests that the Novikov bialgebra and quadratic right Novikov algebra (instead of the right Novikov bialgebra) are the operadic Koszul dual of each other as dioperads or properads, even though the general results in~\cite{Ga,Va} do not apply to this situation. Adapting Theorem~\ref{Gcorrespond6} to such results should provide a general procedure to construct Lie bialgebras.
\end{enumerate}
\end{rmk}

In Section~\ref{sec:nybe}, we will give a general method to apply Theorem~\ref{Gcorrespond6} in constructing Lie bialgebras. For now, we present a simple example.

\begin{ex}
Let $(A, \circ, \Delta_A)$ be the Novikov bialgebra given in
Example \ref{ex-Nov-bi} and $(B, \diamond, (\cdot, \cdot))=({\bf
k}[t,t^{-1}], \diamond, (\cdot, \cdot))$ be the quadratic
$\mathbb{Z}$-graded right Novikov algebra given in Example
\ref{Laurent-Bilinear}. Then by Theorem \ref{Gcorrespond6},
there is a \complete Lie bialgebra $(L=A\otimes B, [\cdot, \cdot], \delta)$ given by
\begin{eqnarray*}
&&[e_1t^i, e_1t^j]=(i-j)e_1t^{i+j-1},\;\;[e_1t^i, e_2t^j]=-je_2t^{i+j-1},\;\;[e_2t^i,e_2t^j]=0 \;\; \tforall  i, j\in \ZZ,\\
&&\delta(e_1t^j)=\sum_{i\in \ZZ}\lambda (j+2i+2)e_2t^{-i-2}\otimes
e_2t^{j+i},\;\; \delta(e_2t^j)=0 \;\;\tforall j\in
\ZZ.
\end{eqnarray*}
\end{ex}
\section{Characterizations of Novikov bialgebras and Novikov Yang-Baxter equation}
\label{sec:nybe}

In Section~\ref{ss:mpair}, we use matched pairs and Manin triples
of Novikov algebras to give two equivalent conditions of Novikov
bialgebras.  We then give in Section~\ref{ss:manin} a natural
construction of a Manin triple of Lie algebras from the coupling
of a Manin triple of Novikov algebras with a quadratic right
Novikov algebra. Section~\ref{ss:nybe} focuses on a special class
of Novikov bialgebras in analogous to the coboundary Lie
bialgebras, leading to the introduction of the Novikov
Yang-Baxter equation (\nybe). A skewsymmetric solution of the
\nybe  in a Novikov algebra gives a Novikov bialgebra. The notions of an $\mathcal O$-operator of a Novikov algebra, a
\ndend algebra and a quasi-Frobenius Novikov algebra are introduced to interpret and construct solutions of the \nybe.

In this section, we assume that all algebras, representations and vector spaces are of finite dimension, although some results still hold in the infinite-dimensional case.
\subsection{Characterizations of Novikov bialgebras}
\label{ss:mpair}
We first give some background on representations of Novikov algebras needed for matched pairs of Novikov algebras.
\vspb
\begin{defi}\cite{O3}
A {\bf representation} of a Novikov algebra $(A,\circ)$ is a triple $(V, l_A,r_A)$, where $V$ is a vector space and   $l_A$, $r_A: A\rightarrow {\rm
End}_{\bf k}(V)$ are linear maps satisfying
\vspb
\begin{eqnarray}
\label{lef-mod1}\notag
&l_A(a\circ b-b\circ a)v=l_A(a)l_A(b)v-l_A(b)l_A(a)v,&\\
\label{lef-mod2}\notag
&l_A(a)r_A(b)v-r_A(b)l_A(a)v=r_A(a\circ b)v-r_A(b)r_A(a)v,&\\
\label{Nov-mod1}\notag
&l_A(a\circ b)v=r_A(b)l_A(a)v,
&\\
\label{Nov-mod2}\notag
&r_A(a)r_A(b)v=r_A(b)r_A(a)v &
\hspace{-2.7cm} \quad \tforall  a, b\in A, v\in V.
\vspc
\end{eqnarray}
\end{defi}
\vspb
Note that $(A, L_A, R_A)$ is a representation of $(A,\circ)$, called the \textbf{adjoint representation} of $(A,\circ)$.

\vspa
\begin{pro}\label{pro:semi}
Let $(A,\circ)$ be a Novikov algebra. Let $V$ be a vector space
and $l_A,r_A: A\to {\rm End}_{\bf k}(V)$ be   linear maps.
Define a binary operation $\bullet$ on $A\oplus V$ by
\vspa
    $$(a+u)\bullet(b+v)\coloneqq a\circ b+l_A(a)v+r_A(b)u \quad\tforall  a,b\in A,u,v\in V.$$
Then $(V,l_A,r_A)$ is a representation of $(A,\circ)$ if and only
if $(A\oplus V,\bullet)$ is a Novikov algebra, called the
{\bf semi-direct product} of $A$ by $V$ and denoted by
$A\ltimes_{l_A,r_A} V$ or simply $A\ltimes V$.
\end{pro}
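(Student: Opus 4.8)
The plan is to verify the two defining identities of a Novikov algebra—left-symmetry (symmetry of the associator in its first two arguments) and right-commutativity—directly on $A\oplus V$ with the product $\bullet$, and then to read off that the $V$-valued part of these identities is precisely the conjunction of the four representation axioms. Since $\bullet$ restricts to $\circ$ on $A$ (so $A$ is a subalgebra) and satisfies $u\bullet v=0$ for all $u,v\in V$ while $a\bullet u=l_A(a)u$ and $u\bullet a=r_A(a)u$ lie in $V$ (so $V$ is an abelian ideal), it suffices by multilinearity to test the identities on elements $a+u$, $b+v$, $c+w$ with $a,b,c\in A$ and $u,v,w\in V$.

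First I would expand each triple product. Because any monomial with two or more entries from $V$ vanishes, the associator $(a+u,\,b+v,\,c+w)_\bullet$ decomposes as the sum of its $A$-component and three $V$-components, indexed by the single slot contributing its $V$-entry. Explicitly, the $A$-component is the associator $(a,b,c)_\circ$ in $A$, while the coefficients of $w$, of $v$, and of $u$ are the operators $l_A(a\circ b)-l_A(a)l_A(b)$, $\ r_A(c)l_A(a)-l_A(a)r_A(c)$, and $\ r_A(c)r_A(b)-r_A(b\circ c)$ respectively, applied to the corresponding element of $V$. An analogous expansion applies to the right-commutativity identity $(x\bullet y)\bullet z=(x\bullet z)\bullet y$, whose $w,v,u$ coefficients are $l_A(a\circ b)-r_A(b)l_A(a)$, $\ r_A(c)l_A(a)-l_A(a\circ c)$, and $\ r_A(c)r_A(b)-r_A(b)r_A(c)$.

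Next I would impose each Novikov identity on $A\oplus V$ and separate by slot. The $A$-components reproduce exactly the two Novikov axioms of $(A,\circ)$, which hold by hypothesis and carry no information about $V$; hence all the content lies in the $V$-components. Matching these term by term, the left-symmetry identity yields, from its third slot, the first representation axiom $l_A(a\circ b-b\circ a)=l_A(a)l_A(b)-l_A(b)l_A(a)$, and, from its first two slots, the second axiom $l_A(a)r_A(b)-r_A(b)l_A(a)=r_A(a\circ b)-r_A(b)r_A(a)$; the right-commutativity identity yields, from its second and third slots, the third axiom $l_A(a\circ b)=r_A(b)l_A(a)$, and, from its first slot, the fourth axiom $r_A(a)r_A(b)=r_A(b)r_A(a)$. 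Since every one of the four representation axioms is recovered, and conversely each of them forces the vanishing of the corresponding $V$-component, the two conditions are equivalent, which establishes both directions of the proposition simultaneously.

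The argument involves no conceptual difficulty; the only real work is the bookkeeping in the expansion and the careful pairing of the six slot-identities (three from each Novikov axiom) with the four representation axioms. In particular, I expect the main care to go into checking that the redundant slots (which reproduce axioms two and three a second time) are genuinely redundant, so that every axiom is accounted for and no spurious extra condition is introduced.
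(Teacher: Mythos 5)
Your proof is correct: the slot-by-slot expansion is accurate (the associator's $w$, $v$, $u$ coefficients $l_A(a\circ b)-l_A(a)l_A(b)$, $r_A(c)l_A(a)-l_A(a)r_A(c)$, $r_A(c)r_A(b)-r_A(b\circ c)$ and the right-commutativity coefficients all check out), and the pairing of the six slot-identities with the four representation axioms, including the observation that two slots are redundant restatements of axioms two and three in permuted variables, is exactly right. However, your route differs from the paper's: the paper gives no computation at all, instead deducing the proposition as a special case of Proposition~\ref{Matched pair} (the matched-pair construction of Novikov algebras, quoted from~\cite{Hong}), by viewing $V$ as a Novikov algebra with zero multiplication and taking $l_B=r_B=0$, so that the matched-pair conditions and the two-subalgebra decomposition in the converse direction collapse to precisely the representation axioms. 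The paper's approach buys brevity and situates the semidirect product inside the more general matched-pair framework that is needed later anyway (for the Manin-triple characterization in Theorem~\ref{thmm1}), at the cost of resting on the unproved-there eight-equation Proposition~\ref{Matched pair}; your approach is self-contained and elementary, makes visible exactly which Novikov identity in which slot forces which representation axiom, and requires no external input, at the cost of the bookkeeping you describe. Both establish the equivalence in full, since your standing hypothesis that $(A,\circ)$ is Novikov disposes of the $A$-components in both directions.
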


This result will follow as a special case of  Proposition~\ref{Matched pair} for matched pairs of Novikov algebras, when $V$ is regarded as a Novikov algebra with the zero multiplication.

Let $(A,\circ)$ be a Novikov algebra and $V$ be a vector space.
For a linear map $\varphi:A\rightarrow\mathrm{End}_{
\bf k}(V)$, define a linear map
$\varphi^{*}:A\rightarrow\mathrm{End}_{\bf k}(V^{*})$ by
\vspa
$$\langle\varphi^{*}(a)f,v\rangle=-\langle f,\varphi(a)v\rangle \tforall a\in A, f\in V^{*},v\in V,$$
where $\langle\cdot, \cdot\rangle$ is the usual pairing between $V$ and $V^*$.

\begin{pro} \label{pp:dualrep}
Let $(A,\circ)$ be a Novikov algebra and $(V, l_A, r_A)$ be a
representation of $(A,\circ)$. Then $(V^\ast, l_A^\ast+r_A^\ast, -r_A^\ast)$ is
a representation of $(A,\circ)$.
\end{pro}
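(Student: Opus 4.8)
The plan is to verify directly the four defining identities of a representation for the triple $(V^\ast,\hat l,\hat r)$, where $\hat l\coloneqq l_A^\ast+r_A^\ast$ and $\hat r\coloneqq -r_A^\ast$. The whole argument rests on one bookkeeping identity for dual maps: for linear maps $\varphi,\psi\colon A\to \End_{\bfk}(V)$ one has
\[
\langle \varphi^\ast(a)\psi^\ast(b)f,v\rangle=\langle f,\psi(b)\varphi(a)v\rangle \tforall a,b\in A,\ f\in V^\ast,\ v\in V,
\]
so that a composite acting on $V^\ast$ becomes the reverse-order composite acting on $V$, the two defining minus signs cancelling; in particular $\langle \hat r(a)f,v\rangle=\langle f,r_A(a)v\rangle$. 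Thus each identity to be proved for $(\hat l,\hat r)$ is paired against arbitrary $f\in V^\ast$ and $v\in V$ and thereby reduced to an operator identity on $V$, which I will then derive from the four defining identities of the representation $(V,l_A,r_A)$, in the order listed in the definition, call them (1)--(4).

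Two of the four identities are immediate. Dualizing $\hat r(a)\hat r(b)=r_A^\ast(a)r_A^\ast(b)$ turns the right-commutativity axiom for $(\hat l,\hat r)$ into exactly axiom (4) for $r_A$. For the identity $\hat l(a\circ b)=\hat r(b)\hat l(a)$, pairing reduces it to $l_A(a\circ b)+r_A(a\circ b)=l_A(a)r_A(b)+r_A(a)r_A(b)$ on $V$; substituting axiom (3) for $l_A(a\circ b)$, axiom (2) in the form $l_A(a)r_A(b)=r_A(b)l_A(a)+r_A(a\circ b)-r_A(b)r_A(a)$, and cancelling $r_A(a)r_A(b)-r_A(b)r_A(a)$ by (4) yields the claim. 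The mixed identity $\hat l(a)\hat r(b)-\hat r(b)\hat l(a)=\hat r(a\circ b)-\hat r(b)\hat r(a)$ likewise collapses, after dualization and one application each of (2) and (4), to a true operator identity on $V$.

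The main work is the bracket identity $\hat l(a\circ b-b\circ a)=\hat l(a)\hat l(b)-\hat l(b)\hat l(a)$, whose dualization produces eight composite operators on $V$. I would sort them by type. The two pure-$l$ terms give $l_A(b)l_A(a)-l_A(a)l_A(b)=-l_A(a\circ b)+l_A(b\circ a)$ by axiom (1); the two pure-$r$ terms $r_A(b)r_A(a)-r_A(a)r_A(b)$ vanish by (4); and the four mixed terms $r_A(b)l_A(a)-r_A(a)l_A(b)+l_A(b)r_A(a)-l_A(a)r_A(b)$ reassemble, using axiom (2) for the pair $(a,b)$ and for the pair $(b,a)$ together with (4), into $-r_A(a\circ b)+r_A(b\circ a)$. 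Summing the three contributions gives $-l_A(a\circ b)-r_A(a\circ b)+l_A(b\circ a)+r_A(b\circ a)$, which is exactly the dualization of $\hat l(a\circ b-b\circ a)=-(l_A+r_A)(a\circ b)+(l_A+r_A)(b\circ a)$, completing the verification.

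The only real obstacle is the bookkeeping in this last step: keeping the order-reversal and the signs straight across the eight composites and matching each against the correct axiom. There is no conceptual difficulty, since every reduction is forced by (1)--(4). I note that one could instead package the statement through Proposition~\ref{pro:semi} by checking that the semidirect product $A\ltimes_{\hat l,\hat r} V^\ast$ is a Novikov algebra, but this amounts to the same computation.
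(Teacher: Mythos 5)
Your proposal is correct: each of the four representation axioms for $(V^\ast, l_A^\ast+r_A^\ast, -r_A^\ast)$ is dualized correctly (the order-reversal identity $\langle \varphi^\ast(a)\psi^\ast(b)f,v\rangle=\langle f,\psi(b)\varphi(a)v\rangle$ is the right tool), and each resulting operator identity on $V$ does follow from the stated combinations of the axioms for $(V,l_A,r_A)$. This is exactly the direct verification that the paper's proof, which reads only ``It is a straightforward check,'' leaves to the reader, so your argument matches the paper's approach while actually supplying the details.
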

\begin{proof}
It is a straightforward check.
\end{proof}

\begin{ex} \label{ex:dualrep}
The adjoint representation of a Novikov algebra $(A,\circ)$ gives the representation
$(A^\ast, L_A^\ast+R_A^\ast, -R_A^\ast)$.
\end{ex}

\begin{rmk} \label{rk:kuper}
Let $(A,\cdot)$ be an
algebraic structure with linear maps $L_A$ and $R_A$ which give a natural representation of $A$ on itself. As the central notion introduced in the comprehensive work of Kupershmidt~\cite{Ku} on algebraic structures, an algebraic structure is called {\bf
proper} if, for every such algebra $(A,\cdot)$, there is a representation on the linear dual $A^*$ that can be given by a nonzero linear combination of $L_A^*$ and $R_A^*$.
This property is essential for example for the algebraic structure to have a bialgebra theory
comparable to the Lie bialgebra and infinitesimal associative bialgebra.
It was incorrectly perceived in~\cite{Ku}\footnote{See \S 19 where he wrote ``The
    moral is that Novikov algebras are not proper".}
that the Novikov algebra is not proper, which would have been an obstacle for a bialgebra theory for the Novikov algebra. Proposition~\ref{pp:dualrep} rectifies this perception. In other words, the Novikov algebra is in fact proper. Thus it is possible to establish a reasonable bialgebra theory, as presented in this paper.
\end{rmk}

We recall matched pairs of Novikov algebras.

\begin{pro}\label{Matched pair} \cite{Hong}
Let $(A,\circ)$ and $(B,\cdot)$ be Novikov algebras. Suppose that $(B,l_A, r_A)$ is a representation of $(A,\circ)$,
$(A,l_B, r_B)$ is a representation of $(B,\cdot)$ and the following conditions are satisfied.
\begin{eqnarray}
&&\label{matchpair1}l_B(x)(a\circ b)=-l_B(l_A(a)x-r_A(a)x)b+(l_B(x)a-r_B(x)a)\circ b+r_B(r_A(b)x)a+a\circ(l_B(x)b),\\
&&\label{matchpair2}r_B(x)(a\circ b-b\circ a)=r_B(l_A(b)x)a-r_B(l_A(a)x)b+a\circ (r_B(x)b)-b\circ (r_B(x)a),\\
&&\label{matchpair3}l_A(a)(x\cdot y)=-l_A(l_B(x)a-r_B(x)a)y+(l_A(a)x-r_A(a)x)\cdot y+r_A(r_B(y)a)x+x\cdot(l_A(a)y),\\
&&\label{matchpair4}r_A(a)(x\cdot y-y\cdot x)=r_A(l_B(y)a)x-r_A(l_B(x)a)y+x\cdot(r_A(a)y)-y\cdot(r_A(a)x),\\
&&\label{matchpair5}(l_B(x)a)\circ b+l_B(r_A(a)x)b=(l_B(x)b)\circ a+l_B(r_A(b)x)a,\\
&&\label{matchpair6}(r_B(x)a)\circ b+l_B(l_A(a)x)b=r_B(x)(a\circ b),\\
&&\label{matchpair7}l_A(r_B(x)a)y+(l_A(a)x)\cdot y=l_A(r_B(y)a)x+(l_A(a)y)\cdot x,\\
~~&&\label{matchpair8}l_A(l_B(x)a)y+(r_A(a)x)\cdot y=r_A(a)(x\cdot y) \tforall a, b\in A, x, y\in B.
\end{eqnarray}
Then there is a Novikov algebra structure on the direct sum $A\oplus B$ of the underlying vector spaces of $A$ and $B$ given by
\vspb
\begin{eqnarray}
(a+x)\bullet (b+y):=\big(a\circ b+l_B(x)b+r_B(y)a\big)+\big(x\cdot y+l_A(a)y+r_A(b)x\big)
\vspa
\end{eqnarray}
for all $a,b\in A, x,y\in B$. We call the resulting sextuple $(A,
B, l_A, r_A, l_B, r_B)$ a {\bf matched pair of Novikov algebras}.
Conversely, any Novikov algebra that can be decomposed into a linear direct sum of two Novikov subalgebras is obtained from a matched pair of Novikov algebras.
\end{pro}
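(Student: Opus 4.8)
The plan is to prove both directions by directly checking the two defining identities of a Novikov algebra for the operation $\bullet$ and organizing the verification by the decomposition $A\oplus B$. Since each Novikov identity---left-symmetry $(u\bullet v)\bullet w-u\bullet(v\bullet w)=(v\bullet u)\bullet w-v\bullet(u\bullet w)$ and right-commutativity $(u\bullet v)\bullet w=(u\bullet w)\bullet v$---is trilinear in $u,v,w$, it suffices by linearity to verify it when each of $u,v,w$ lies entirely in $A$ or entirely in $B$, giving $2^3$ cases per identity. The two homogeneous cases, all three arguments in $A$ or all three in $B$, reduce at once to the Novikov axioms of $(A,\circ)$ and $(B,\cdot)$, so the content lies in the six mixed cases.

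For the mixed cases I would expand both sides using the formula for $\bullet$ and then project onto $A$ and onto $B$. A short check (as in the computation behind Proposition~\ref{pro:semi}) shows that for a configuration with two arguments in $A$ and one in $B$, the $B$-component of each identity involves only $\circ$, $l_A$, and $r_A$ and reproduces exactly the four representation axioms for $(B,l_A,r_A)$; the $A$-component mixes all the operations and yields, after placing the $B$-argument in the two inequivalent slots allowed by the symmetry of each identity, precisely conditions (\ref{matchpair1}) and (\ref{matchpair2}) from left-symmetry and (\ref{matchpair5}) and (\ref{matchpair6}) from right-commutativity. By the mirror symmetry exchanging $(A,\circ,l_A,r_A)$ with $(B,\cdot,l_B,r_B)$, the configurations with one argument in $A$ and two in $B$ give the representation axioms for $(A,l_B,r_B)$ on their $A$-component and conditions (\ref{matchpair3}), (\ref{matchpair4}), (\ref{matchpair7}), (\ref{matchpair8}) on their $B$-component. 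The antisymmetrized forms (\ref{matchpair2}) and (\ref{matchpair4}) arise automatically, since left-symmetry equates the two orderings of its first two arguments.

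For the converse, suppose $(N,\bullet)$ is a Novikov algebra with $N=A\oplus B$ as vector spaces and $A,B$ subalgebras, and let $p_A,p_B$ be the projections. Matching the product formula forces the definitions $l_A(a)x:=p_B(a\bullet x)$, $r_A(a)x:=p_B(x\bullet a)$, $l_B(x)a:=p_A(x\bullet a)$, and $r_B(x)a:=p_A(a\bullet x)$ for $a\in A$, $x\in B$. Running the same projection argument backwards---applying the two Novikov identities of $N$ to homogeneous triples and reading off components---then yields that $(B,l_A,r_A)$ and $(A,l_B,r_B)$ are representations and that (\ref{matchpair1})--(\ref{matchpair8}) hold, so the sextuple is a matched pair recovering $\bullet$. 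I expect the only real difficulty to be organizational rather than conceptual: tracking, across all sixteen projected identities, the exact placement of arguments and the resulting compositions of $\circ,\cdot,l_A,r_A,l_B,r_B$, and confirming that the representation axioms together with (\ref{matchpair1})--(\ref{matchpair8}) form a complete and non-redundant list.
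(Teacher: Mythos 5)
Your proposal is correct and takes the expected route: the paper itself supplies no proof of this proposition (it is quoted from \cite{Hong}), and the direct verification you outline---checking the two Novikov identities on homogeneous triples, with the pure $A$ and pure $B$ cases reducing to the Novikov axioms, the $B$-components (resp.\ $A$-components) of the mixed cases giving the representation axioms for $(B,l_A,r_A)$ (resp.\ $(A,l_B,r_B)$), and the remaining components giving Eqs.~(\ref{matchpair1})--(\ref{matchpair8})---is exactly the computation behind the cited result. Your bookkeeping also checks out in detail: in the two-$A$-one-$B$ configurations, left-symmetry yields (\ref{matchpair1}) and (\ref{matchpair2}) (the latter antisymmetrized, as you note) and right-commutativity yields (\ref{matchpair5}) and (\ref{matchpair6}), the mirror cases yield (\ref{matchpair3}), (\ref{matchpair4}), (\ref{matchpair7}), (\ref{matchpair8}), and the converse follows by defining $l_A,r_A,l_B,r_B$ via the projections $p_A,p_B$ and running the same projected identities backwards.
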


Similar to Definition~\ref{def:quad}, we give the following notion.

\begin{defi}\label{Novbilinear}
Let $(A,\circ)$ be a Novikov algebra. A bilinear form
$\mathcal{B}(\cdot,\cdot)$  on $A$ is called {\bf invariant} if it satisfies
\vspc
\begin{eqnarray}\label{bilinear1}
\mathcal{B}(a\circ b,c)=-\mathcal{B}(b, a\star c)\;\;\tforall
a,b,c\in A.
\vspa
\end{eqnarray}
A {\bf quadratic Novikov algebra}, denoted by $(A,
\circ,\mathcal{B}(\cdot,\cdot))$, is a Novikov algebra $(A,\circ)$
together with a nondegenerate symmetric invariant bilinear form
$\mathcal{B}(\cdot,\cdot)$.
\end{defi}
\vspb
\begin{rmk}\label{quadratic-opp}
\begin{enumerate}
\item It is easy to see that if $(B, \diamond, (\cdot, \cdot))$ is a finite-dimensional quadratic right Novikov algebra, then with the multiplication
 $a\circ  b:=b\diamond a$ for all $a$, $b\in B$, the triple
$(B, \circ, (\cdot, \cdot))$ is a quadratic Novikov algebra. \item
The invariant condition of the bilinear form on a Novikov algebra
defined by Eq.~\eqref{bilinear1} is different from those studied in~\cite{BM4, G, Le, Z, ZC}.
\end{enumerate}
\end{rmk}

Motivated by the Manin triples of Lie algebras \cite{CP},
 we give the following notion.
\begin{defi}
 A {\bf (standard) Manin triple of Novikov algebras} is a triple of Novikov algebras $(A=A_1\oplus A_1^*,(A_1,\circ),(A_1^\ast,\cdot))$ for which
\begin{enumerate}
    \item as a vector space, $A$ is the direct sum of $A_1$ and $A_1^\ast$;
    \item $(A_1,\circ)$ and $(A_1^\ast,\cdot)$ are Novikov subalgebras of $A$;
    \item the bilinear form on $A=A_1\oplus A_1^\ast$ defined by
\begin{eqnarray}\label{bilinear}
\mathcal{B}(a+f,b+g):=\langle f, b\rangle+\langle g, a\rangle
\;\;\tforall ~~~~~~~~~~a,~~b\in A_1,~~f,~~g\in A_1^\ast,
\end{eqnarray}
is invariant.
\end{enumerate}
\end{defi}

Obviously, the bilinear form $\mathcal{B}(\cdot,\cdot)$ defined by Eq.~(\ref{bilinear}) is symmetric and nondegenerate. Thus $(A, \circ, \mathcal{B}(\cdot,\cdot))$ is indeed a quadratic Novikov algebra. Manin triples of Novikov algebras is equivalent to certain matched pairs of Novikov algebras.
\vspa
\begin{thm}\label{thmm1}
Let $(A,\circ)$ be a Novikov algebra. Suppose that there is also a
Novikov algebra structure $\cdot$ on its linear dual space $A^\ast$. Then
there is a Manin triple $(A\oplus A^*,(A,\circ),(A^\ast,\cdot))$
of Novikov algebras if and only if
$(A, A^\ast, L_{A,\star}^\ast, -R_A^\ast, L_{A^\ast,\star}^\ast,
-R_{A^\ast}^\ast)$ is a matched pair of Novikov algebras, where
$L_{A,\star}^\ast\coloneqq L_A^\ast+R_A^\ast$ and
$L_{A^\ast,\star}^\ast\coloneqq L_{A^\ast}^\ast+R_{A^\ast}^\ast$.
\end{thm}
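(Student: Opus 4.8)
The plan is to reduce everything to the matched-pair correspondence of Proposition~\ref{Matched pair}, so that the only genuine content becomes the translation of the invariance of $\mathcal{B}(\cdot,\cdot)$ into a precise identification of the four structure maps. A standard Manin triple requires two things: that $A\oplus A^\ast$ carry a Novikov algebra structure in which $A$ and $A^\ast$ are subalgebras, and that the pairing $\mathcal{B}$ of Eq.~\eqref{bilinear} be invariant (its symmetry and nondegeneracy being automatic). The first requirement is, by both directions of Proposition~\ref{Matched pair}, exactly the statement that the Novikov structure on $A\oplus A^\ast$ arises from some matched pair $(A,A^\ast,l_A,r_A,l_{A^\ast},r_{A^\ast})$, where the maps are recovered from the product by projecting $a\bullet f$ and $f\bullet a$ onto the two summands. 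Thus the whole theorem will follow once I show that, among all matched-pair structures on $A\oplus A^\ast$, invariance of $\mathcal{B}$ is equivalent to $l_A=L_{A,\star}^\ast$, $r_A=-R_A^\ast$, $l_{A^\ast}=L_{A^\ast,\star}^\ast$ and $r_{A^\ast}=-R_{A^\ast}^\ast$.

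For the forward direction I start from a Manin triple, extract the matched pair as above, and expand the invariance condition $\mathcal{B}(X\bullet Y,Z)=-\mathcal{B}(Y,X\star Z)$, the analogue of Eq.~\eqref{bilinear1} for the product $\bullet$, over the cases in which $X,Y,Z$ each lie in $A$ or $A^\ast$. Since $\mathcal{B}$ pairs $A$ with $A^\ast$ and annihilates $A\times A$ and $A^\ast\times A^\ast$, most cases collapse. Taking $X,Y\in A$ and $Z=g\in A^\ast$ gives, after using the defining relation $\langle\varphi^\ast(a)f,v\rangle=-\langle f,\varphi(a)v\rangle$, the identity $l_A+r_A=L_A^\ast$; taking $X=a\in A$, $Y\in A^\ast$, $Z=b\in A$ gives $l_A=L_A^\ast+R_A^\ast=L_{A,\star}^\ast$, whence $r_A=-R_A^\ast$. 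The involution exchanging $A\leftrightarrow A^\ast$ (together with $\circ\leftrightarrow\cdot$ and $L_A,R_A\leftrightarrow L_{A^\ast},R_{A^\ast}$), under which $\mathcal{B}$ is symmetric, turns these two cases into the corresponding ones for the opposite product and yields $l_{A^\ast}=L_{A^\ast,\star}^\ast$ and $r_{A^\ast}=-R_{A^\ast}^\ast$. Hence the matched pair is precisely the claimed sextuple.

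For the converse I assume $(A,A^\ast,L_{A,\star}^\ast,-R_A^\ast,L_{A^\ast,\star}^\ast,-R_{A^\ast}^\ast)$ is a matched pair; Proposition~\ref{Matched pair} then endows $A\oplus A^\ast$ with a Novikov algebra structure having $A$ and $A^\ast$ as subalgebras, and it remains only to verify invariance of $\mathcal{B}$. I run the same case analysis in reverse: each nontrivial case now reduces, via the definition of the dual maps and the bilinearity of the natural pairing, to an identity that holds automatically once the four maps are the coadjoint-type representations furnished by Proposition~\ref{pp:dualrep} and Example~\ref{ex:dualrep}.

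The main bookkeeping obstacle is the ``mixed'' cases, such as $X\in A$ with $Y,Z\in A^\ast$, in which both cross-representations $l_{A^\ast},r_{A^\ast}$ appear simultaneously; these impose no further constraint in the forward direction but must be checked to close the equivalence in the converse, and they are the places where sign errors are easiest to make. Care is therefore needed with the minus sign in the dualization $\varphi\mapsto\varphi^\ast$ and with the identification $A^{\ast\ast}\cong A$ used when transporting $L_{A^\ast},R_{A^\ast}$ back to operators on $A$. Once the conventions are fixed, each mixed case telescopes to a single use of invariance of the pairing together with the Novikov relations, so no computation beyond Proposition~\ref{pp:dualrep} is genuinely new.
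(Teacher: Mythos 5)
Your proposal is correct and takes essentially the same route as the paper: the paper's proof is a one-line appeal to the direct argument of \cite[Theorem~2.2.1]{Bai2} for associative algebras, which is precisely your reduction --- identify the ambient Novikov structure on $A\oplus A^\ast$ with a matched pair via Proposition~\ref{Matched pair}, then run the case analysis on the invariance of $\mathcal{B}$ to show it forces (and, conversely, follows from) $l_A=L_{A,\star}^\ast$, $r_A=-R_A^\ast$, $l_{A^\ast}=L_{A^\ast,\star}^\ast$, $r_{A^\ast}=-R_{A^\ast}^\ast$. The constraints you extract from the individual cases (e.g.\ $l_A+r_A=L_A^\ast$ and $l_A=L_{A,\star}^\ast$) and the observation that the remaining mixed cases are then automatic both check out.
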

\vspc
\begin{proof}
The proof is direct, following that of \cite[Theorem 2.2.1]{Bai2} for associative algebras.
\end{proof}
\vspc
\begin{thm}
Let $(A,\circ)$ be a Novikov algebra and let $\Delta:A\to A\ot A$ be a linear map. Suppose that the dual of $\Delta$ also gives a
Novikov algebra structure $\cdot$ on $A^\ast$. Then $(A, A^\ast,
L_{A,\star}^\ast, -R_A^\ast, L_{A^\ast,\star}^\ast,$ $
-R_{A^\ast}^\ast)$ is a matched pair of Novikov algebras if and
only if $(A, \circ, \Delta)$ is a Novikov bialgebra.
\label{thm1}
\end{thm}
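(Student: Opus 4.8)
The plan is to translate each of the eight matched pair compatibility conditions \eqref{matchpair1}--\eqref{matchpair8} into an identity of tensors in $A\otimes A$ or $A\otimes A\otimes A$, and to recognize the resulting identities as precisely \eqref{Lb5}--\eqref{Lb7}. First I would record that the representation requirements built into the notion of a matched pair are automatic here: by Proposition~\ref{pp:dualrep} applied to the adjoint representation of $(A,\circ)$, the triple $(A^\ast, L_{A,\star}^\ast, -R_A^\ast)$ is a representation of $(A,\circ)$, and the same proposition applied to the adjoint representation of $(A^\ast,\cdot)$ shows that $(A, L_{A^\ast,\star}^\ast, -R_{A^\ast}^\ast)$ is a representation of $(A^\ast,\cdot)$. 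Hence the whole content of the matched pair assertion is the list of cross-compatibility relations \eqref{matchpair1}--\eqref{matchpair8}, and it suffices to match these against the bialgebra axioms of Definition~\ref{Def-Novbi}.

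The translation mechanism is the nondegenerate pairing $\langle\cdot,\cdot\rangle$ between $A$ and $A^\ast$, together with the defining relation $\langle \Delta(a), f\otimes g\rangle=\langle a, f\cdot g\rangle$ identifying $\Delta$ as the dual of the multiplication on $A^\ast$. Each matched pair condition is an operator identity valued in $A$ or $A^\ast$; pairing it against arbitrary test elements and unfolding the definitions $\langle L_{A,\star}^\ast(a)f,v\rangle=-\langle f, a\star v\rangle$, $\langle R_A^\ast(a)f,v\rangle=-\langle f, v\circ a\rangle$ and their $A^\ast$-analogues converts every occurrence of $L_{A^\ast}$ and $R_{A^\ast}$ into a component of $\Delta$, and every occurrence of $L_A$ and $R_A$ into the operators $L_{A,\star}$ and $R_A$ appearing in \eqref{Lb5}--\eqref{Lb7}. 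Because the pairing is nondegenerate, each such translation is an \emph{equivalence}, so performing it term by term yields the biconditional directly, with no need to treat the two implications separately.

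The labor can be organized by the involution exchanging $A$ and $A^\ast$, under which \eqref{matchpair1} pairs with \eqref{matchpair3}, \eqref{matchpair2} with \eqref{matchpair4}, \eqref{matchpair5} with \eqref{matchpair7}, and \eqref{matchpair6} with \eqref{matchpair8}; within each pair the two translated identities are related by transposition of tensor factors, so it suffices to verify one representative per pair and then track how the transposed versions interact. I anticipate that the relation carrying $r_B(x)(a\circ b)$, namely \eqref{matchpair6}, reproduces the cocycle-type condition \eqref{Lb5}, that the ``commutator'' relation \eqref{matchpair2} reproduces the antisymmetry condition \eqref{Lb7}, and that \eqref{matchpair1} reproduces \eqref{Lb6}; the remaining relations \eqref{matchpair5},\eqref{matchpair7} should then turn out to be consequences of these together with the Novikov algebra axioms and the Novikov coalgebra axioms \eqref{Lc3}--\eqref{Lc4}, which is what accounts for the three axioms \eqref{Lb5}--\eqref{Lb7} absorbing all eight matched pair conditions.

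The main obstacle is precisely this reduction together with its bookkeeping: correctly dualizing the nested mixed products such as $l_B(l_A(a)x)b$ and $r_B(r_A(b)x)a$ while tracking the flip $\tau$ introduced whenever tensor factors are transposed, and then confirming that the apparently extra pair of relations is genuinely forced by the coassociativity relations \eqref{Lc3}--\eqref{Lc4} rather than being an independent constraint. The cleanest route is to follow the template of the associative-algebra computation in \cite[Theorem~2.2.1]{Bai2}, which establishes the analogous matched-pair/bialgebra equivalence, adapting each step to the Novikov operators $L_{A,\star}$ and $R_A$ and to the pair of Novikov coalgebra axioms.
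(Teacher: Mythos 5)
Your overall method---dualize the eight conditions \eqref{matchpair1}--\eqref{matchpair8} through the nondegenerate pairing, after noting that the two representation requirements are automatic by Proposition~\ref{pp:dualrep}, and then compare with \eqref{Lb5}--\eqref{Lb7}---is exactly the paper's method (carried out there in structure constants with respect to dual bases). However, your anticipated dictionary is wrong, and in a way that would make the computation fail rather than merely look different. With $l_B=L_{A^\ast,\star}^\ast$ and $r_B=-R_{A^\ast}^\ast$, the left-hand sides of \eqref{matchpair1} and \eqref{matchpair2} are $L_{A^\ast,\star}^\ast(x)(a\circ b)$ and $-R_{A^\ast}^\ast(x)(a\circ b-b\circ a)$, whose pairings against a test element $y\in A^\ast$ produce, up to sign, $\langle\Delta(a\circ b),\,x\otimes y+y\otimes x\rangle$ and $\langle\Delta(a\circ b-b\circ a),\,y\otimes x\rangle$; that is, both dualize to identities containing the coproduct of a product. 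Neither \eqref{Lb6} nor \eqref{Lb7} contains any such term, so your claimed equivalences \eqref{matchpair1}$\Leftrightarrow$\eqref{Lb6} and \eqref{matchpair2}$\Leftrightarrow$\eqref{Lb7} cannot arise as direct translations, and the paper's proof establishes a different dictionary: \eqref{Lb5} is equivalent to \eqref{matchpair6} (and to \eqref{matchpair8}), \eqref{Lb6} is equivalent to \eqref{matchpair7}, \eqref{Lb7} is equivalent to \eqref{matchpair5}, while \eqref{matchpair1} follows from \eqref{Lb5} and \eqref{Lb7} jointly, and \eqref{matchpair2} from \eqref{Lb5} and \eqref{Lb6} jointly (similarly for \eqref{matchpair3} and \eqref{matchpair4}). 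You did identify \eqref{matchpair6}$\Leftrightarrow$\eqref{Lb5} correctly.

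Consequently the redundancy runs in the opposite direction from what you predict: \eqref{matchpair5} and \eqref{matchpair7} are not consequences of the remaining conditions---they are precisely the translations of \eqref{Lb7} and \eqref{Lb6}, two of the three essential axioms---whereas the four conditions \eqref{matchpair1}--\eqref{matchpair4}, the ones involving actions on products, are the redundant ones. Moreover, the paper derives the redundant conditions purely by index manipulation of the three translated compatibility identities (in coordinates, Eqs.~\eqref{corresp10}--\eqref{corresp12} imply \eqref{corresp1}, \eqref{corresp2}, \eqref{corresp5} and \eqref{corresp6}); the coalgebra axioms \eqref{Lc3}--\eqref{Lc4} play no role in this step, contrary to your suggestion that coassociativity is what absorbs the extra conditions. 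If you executed your plan literally---proving your three claimed equivalences and then deriving \eqref{matchpair5} and \eqref{matchpair7}---the very first translation of \eqref{matchpair1} would leave uncancelled $\Delta(a\circ b)$-terms, signalling that you must instead take \eqref{matchpair5}, \eqref{matchpair6}, \eqref{matchpair7} as the conditions matching \eqref{Lb7}, \eqref{Lb5}, \eqref{Lb6}, and then deduce \eqref{matchpair1}--\eqref{matchpair4} from those.
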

\vspe
\begin{proof}
Since the dual of $\Delta$ gives a Novikov algebra structure
$\cdot$ on $A^\ast$, $(A, \Delta)$ is a Novikov coalgebra. Let
$\{e_1, \ldots, e_n\}$ be a basis of $A$ and $\{e_1^\ast, \ldots,
e_n^\ast\}$ be its dual basis. Denote
\vspb
$$e_\alpha\circ e_\beta=\sum_{\gamma=1}^nc_{\alpha\beta}^\gamma e_\gamma,\;\;\Delta(e_\gamma)=\sum_{\alpha,\beta=1}^nd_{\alpha\beta}^\gamma e_\alpha\otimes e_\beta.
\vspc
$$
Then $e_\alpha^\ast \cdot e_\beta^\ast=\sum_{\gamma=1}^n d_{\alpha\beta}^\gamma e_\gamma^\ast$ and
\vspb
\small{
$$R_A^\ast(e_\alpha)e_\beta^\ast=-\sum_{\gamma=1}^nc_{\gamma \alpha}^\beta e_\gamma^\ast,~~L_A^\ast(e_\alpha)e_\beta^\ast=-\sum_{\gamma=1}^nc_{\alpha \gamma}^\beta e_\gamma^\ast,{R_{A^\ast}^\ast}(e_\alpha^\ast)e_\beta=-\sum_{\gamma=1}^nd_{\gamma \alpha}^\beta e_\gamma,~~{L_{A^\ast}^\ast}(e_\alpha^\ast)e_\beta=-\sum_{\gamma=1}^nd_{\alpha\gamma}^\beta e_\gamma.
\vspb
$$}
Hence $L_{A, \star}^\ast(e_\alpha)e_\beta^\ast=-\sum_{\gamma=1}^n(c_{\gamma \alpha}^\beta+c_{\alpha \gamma}^\beta)e_\gamma^\ast$ and $L_{A^\ast,\star}^\ast(e_\alpha^\ast)e_\beta=-\sum_{\gamma=1}^n(d_{\gamma \alpha}^\beta+d_{\alpha \gamma}^\beta)e_\gamma$.

Take $l_A=L_{A,\star}^\ast$, $r_A=-R_A^\ast$, $l_B=L_{A^\ast, \star}^\ast$, $r_B=-R_{A^\ast}^\ast$ in Eqs.~(\ref{matchpair1})-(\ref{matchpair8}). Setting $a=e_\alpha$, $b=e_\beta$, $x=e_\gamma^\ast$ in Eqs.~(\ref{matchpair1}), (\ref{matchpair2}), (\ref{matchpair5}) and (\ref{matchpair6}), and comparing the coefficients of $e_\ell$ in these equalities, we obtain
\vspb
\begin{eqnarray}
&&\label{corresp1}\sum_{\nu=1}^n c_{\alpha\beta}^\nu(d_{\ell \gamma}^\nu+d_{\gamma \ell}^\nu)=\sum_{\nu=1}^n\big((2c_{\nu\alpha}^\gamma+c_{\alpha \nu}^\gamma)(d_{\ell\nu}^\beta+d_{\nu \ell}^\beta)+(2d_{\nu\gamma}^\alpha+d_{\gamma \nu}^\alpha)c_{\nu\beta}^\ell-c_{\nu\beta}^\gamma d_{\ell\nu}^\alpha+(d_{\nu\gamma}^\beta+d_{\gamma \nu}^\beta)c_{\alpha \nu}^\ell\big),\\
&&\label{corresp2}\sum_{\nu=1}^n(c_{\alpha \beta}^\nu-c_{\beta\alpha}^\nu)d_{\ell \gamma}^\nu=\sum_{\nu=1}^n\big((c_{\nu\alpha}^\gamma+c_{\alpha \nu}^\gamma)d_{\ell \nu}^\beta-(c_{\nu\beta}^\gamma+c_{\beta \nu}^\gamma)d_{\ell \nu}^\alpha +d_{\nu \gamma}^\beta c_{\alpha \nu}^\ell-d_{\nu \gamma}^\alpha c_{\beta \nu}^\ell\big),\\
&&\label{corresp3}\sum_{\nu=1}^n\big((d_{\nu \gamma}^\alpha+d_{\gamma\nu}^\alpha)c_{\nu \beta}^\ell+(d_{\ell\nu}^\beta+d_{\nu \ell}^\beta)c_{\nu \alpha}^\gamma\big)=\sum_{\nu=1}^n\big((d_{\nu \gamma}^\beta+d_{\gamma\nu}^\beta)c_{\nu \alpha}^\ell+(d_{\ell\nu}^\alpha+d_{\nu \ell}^\alpha)c_{\nu \beta}^\gamma\big),\\
&&\label{corresp4}\sum_{\nu=1}^n(d_{\nu \gamma}^\alpha c_{\nu \beta}^\ell+(c_{\nu \alpha}^\gamma+c_{\alpha \nu}^\gamma)(d_{\nu \ell}^\beta+d_{\ell \nu}^\beta))=\sum_{\nu=1}^nc_{\alpha\beta}^\nu d_{\ell \gamma}^\nu.
\vspb
\end{eqnarray}
Setting $x=e_\alpha^\ast$, $y=e_\beta^\ast$ and $a=e_\gamma$ in Eqs.~(\ref{matchpair3}), (\ref{matchpair4}), (\ref{matchpair7}) and (\ref{matchpair8}), and comparing the coefficients of $e_\ell^\ast$, we derive
\vspb
\begin{eqnarray}
&&\label{corresp5}\sum_{\nu=1}^n d_{\alpha\beta}^\nu(c_{\ell \gamma}^\nu+c_{\gamma \ell}^\nu)=\sum_{\nu=1}^n\big((2d_{\nu \alpha}^\gamma+d_{\alpha \nu}^\gamma)(c_{\ell \nu}^\beta+c_{\nu \ell}^\beta)+(2c_{\nu \beta}^\alpha+c_{\gamma \nu}^\alpha)d_{\nu \beta}^\ell-d_{\nu \beta}^\gamma c_{\ell\nu}^\alpha+(c_{\nu\gamma}^\beta+c_{\gamma \nu}^\beta)d_{\alpha \nu}^\ell\big),\\
&&\label{corresp6}\sum_{\nu=1}^n(d_{\alpha \beta}^\nu-d_{\beta\alpha}^\nu)c_{\ell\gamma}^\nu=\sum_{\nu=1}^n\big((d_{\nu\alpha}^\gamma+d_{\alpha \nu}^\gamma)c_{\ell\nu}^\beta-(d_{\nu\beta}^\gamma+d_{\beta \nu}^\gamma)c_{\ell\nu}^\alpha+c_{\nu\gamma}^\beta d_{\alpha \nu}^\ell-c_{\nu\gamma}^\alpha d_{\beta \nu}^\ell\big),\\
&&\label{corresp7}\sum_{\nu=1}^n((c_{\nu \gamma}^\alpha+c_{\gamma \nu}^\alpha)d_{\nu \beta}^\ell+(c_{\ell\nu}^\beta+c_{\nu \ell}^\beta)d_{\nu \alpha}^\gamma)=\sum_{\nu=1}^n\big((c_{\nu \gamma}^\beta+c_{\gamma\nu}^\beta)d_{\nu \alpha}^\ell+(c_{\ell\nu}^\alpha+c_{\nu \ell}^\alpha)d_{\nu \beta}^\gamma\big),\\
&&\label{corresp8}\sum_{\nu=1}^n\big(c_{\nu \gamma}^\alpha d_{\nu \beta}^\ell+(d_{\nu \alpha}^\gamma+d_{\alpha\nu}^\gamma)(c_{\nu \ell}^\beta+c_{\ell\nu}^\beta)\big)=\sum_{\nu=1}^nd_{\alpha \beta}^\nu c_{\ell\gamma}^\nu.
\end{eqnarray}

Further, setting $a=e_\alpha$ and $b=e_\beta$ in Eqs.~(\ref{Lb5})-(\ref{Lb7}), and comparing the coefficients of $e_\ell\otimes e_\gamma$, we have
\vspb
\begin{eqnarray}
&&\label{corresp10}\sum_{\nu=1}^nc_{\beta \alpha}^\nu d_{\ell \gamma}^\nu=\sum_{\nu=1}^n\big(d_{\nu \gamma}^\beta c_{\nu \alpha}^\ell+(d_{\ell\nu}^\alpha+d_{\nu \ell}^\alpha)(c_{\beta\nu}^\gamma+c_{\nu \beta}^\gamma)\big),\\
&&\label{corresp11}\sum_{\nu=1}^n(d_{\nu \ell}^\beta(c_{\alpha\nu}^\gamma+c_{\nu \alpha}^\gamma)-d_{\nu \gamma}^\beta(c_{\alpha\nu}^\ell+c_{\nu \alpha}^\ell))=\sum_{\nu=1}^n\big(d_{\nu \ell}^\alpha(c_{\beta\nu }^\gamma+c_{\nu \beta}^\gamma)-d_{\nu \gamma}^\alpha(c_{\beta\nu}^\ell+c_{\nu \beta}^\ell)\big),\\
&&\label{corresp12}\sum_{\nu=1}^n((d_{\nu \ell}^\beta+d_{\ell \nu}^\beta)c_{\nu \alpha}^\gamma-(d_{\gamma\nu}^\beta+d_{\nu \gamma}^\beta)c_{\nu \alpha}^\ell)=\sum_{\nu=1}^n\big((d_{\nu\ell}^\alpha+d_{\ell \nu}^\alpha)c_{\nu \beta}^\gamma-(d_{\gamma\nu}^\alpha+d_{\nu \gamma}^\alpha)c_{\nu \beta}^\ell)\big).
\vspb
\end{eqnarray}

Now setting $\alpha=\beta$, $\beta=\alpha$ in Eq.~(\ref{corresp10}), we find that Eq.~(\ref{corresp10}) is equivalent to Eq.~(\ref{corresp4}). Moreover, by setting $\beta=\ell$, $\alpha=\gamma$, $\ell=\alpha$ and $\gamma=\beta$ in Eq.~(\ref{corresp10}), we obtain that Eq.~(\ref{corresp10}) is equivalent to Eq.~(\ref{corresp8}). Similarly, Eq.~(\ref{corresp11}) (resp. Eq.~(\ref{corresp12})) is equivalent to Eq.~(\ref{corresp7}) (resp. Eq.~(\ref{corresp3}).

By Eq.~(\ref{corresp10}), we obtain
\vspb
\begin{eqnarray}\label{corresp13}
{\small \ \ \ \ \ \sum_{\nu=1}^nc_{\alpha \beta}^\nu(d_{\ell\gamma}^\nu+d_{\gamma \ell}^\nu)=\sum_{\nu=1}^n(d_{\nu \gamma }^\alpha c_{\nu \beta}^\ell+(d_{\ell\nu}^\beta+d_{\nu \ell}^\beta)(c_{\alpha\nu}^\gamma +c_{\nu \alpha}^\gamma ))+\sum_{\nu=1}^n(d_{\nu \ell}^\alpha c_{\nu \beta}^\gamma +(d_{\gamma \nu}^\beta+d_{\nu \gamma }^\beta)(c_{\alpha\nu}^\ell+c_{\nu \alpha}^\ell)).}
\end{eqnarray}
Adding the two sides of Eqs.~(\ref{corresp13}) and
~(\ref{corresp12}) and rearranging the terms, we obtain
Eq.~(\ref{corresp1}). Therefore Eqs.~(\ref{corresp10}) and (\ref{corresp12}) imply (\ref{corresp1}). Similarly,
Eqs.~(\ref{corresp10}) and (\ref{corresp11}) imply Eq.~(\ref{corresp2});
Eqs.~ (\ref{corresp10}) and
(\ref{corresp11}) imply Eq.~(\ref{corresp5}); and Eqs.~(\ref{corresp10}) and (\ref{corresp12}) imply Eq.~(\ref{corresp6}). This completes the proof.
\end{proof}
\vspb
\begin{rmk}
By the proof of Theorem \ref{thm1}, we see that for the matched pair
$(A, A^\ast, L_{A,\star}^\ast, -R_A^\ast,$ $L_{A^\ast, \star}^\ast,
-R_{A^\ast}^\ast)$ of Novikov algebras, the following implications hold.
\vspb
\begin{eqnarray*}
&&{\rm Eq.~(\ref{matchpair6})}\Longleftrightarrow {\rm Eq.~(\ref{matchpair8})},~~~~\\
&&{\rm  Eq.~(\ref{matchpair5})~~~and~~~Eq.~(\ref{matchpair6})} \Longrightarrow {\rm Eq.~(\ref{matchpair1})},\;\;{\rm  Eq.~(\ref{matchpair5})~~~and~~~Eq.~(\ref{matchpair6})}\Longrightarrow {\rm Eq.~(\ref{matchpair4})},\\
&&{\rm  Eq.~(\ref{matchpair6})~~~and~~~Eq.~(\ref{matchpair7})}\Longrightarrow {\rm Eq.~(\ref{matchpair2})},\;\;{\rm  Eq.~(\ref{matchpair6})~~~and~~~Eq.~(\ref{matchpair7})}\Longrightarrow {\rm Eq.~(\ref{matchpair3})}.
\vspb
\end{eqnarray*}
\end{rmk}
\vspa
Theorems \ref{thmm1} and \ref{thm1} give the following characterizations of Novikov bialgebras.
\vspa
\begin{cor}\label{Nov-equi}\label{cc1}
Let $(A,\circ)$ be a Novikov algebra and $(A,\Delta)$ a Novikov coalgebra. Let $\cdot$ denote the multiplication on the dual space $A^\ast$ induced by $\Delta$. Then the
following conditions are equivalent.
\begin{enumerate}
\item There is a Manin triple $(A\oplus A^*,(A,\circ),(A^\ast,\cdot))$
of Novikov algebras;
\item $(A,
A^\ast, L_{A, \star}^\ast, -R_A^\ast, L_{A^\ast, \star}^\ast,
-R_{A^\ast}^\ast)$ is a matched pair of Novikov algebras; \item
$(A, \circ, \Delta)$ is a Novikov bialgebra.
\end{enumerate}
\end{cor}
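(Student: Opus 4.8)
The plan is to deduce this equivalence directly by concatenating Theorems~\ref{thmm1} and~\ref{thm1}, so that no fresh computation is required. The only point needing attention is that the two theorems are stated under slightly different standing hypotheses, and I must check that these are all furnished by the data of the corollary.

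First I would note that, since $(A,\Delta)$ is assumed to be a Novikov coalgebra, the operation $\cdot$ on $A^\ast$ induced by $\Delta$ is a genuine Novikov algebra structure; this is the finite-dimensional algebra--coalgebra duality recorded just after the definition of Novikov coalgebras, namely that $(A,\Delta)$ is a Novikov coalgebra if and only if $(A^\ast,\cdot)$ is a Novikov algebra. Thus both of the hypotheses ``there is a Novikov algebra structure $\cdot$ on $A^\ast$'' (needed for Theorem~\ref{thmm1}) and ``the dual of $\Delta$ gives a Novikov algebra structure $\cdot$ on $A^\ast$'' (needed for Theorem~\ref{thm1}) are in force.

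With these hypotheses verified, Theorem~\ref{thmm1} yields the equivalence of condition (a) (existence of the Manin triple $(A\oplus A^\ast,(A,\circ),(A^\ast,\cdot))$) with condition (b) (the sextuple $(A,A^\ast,L_{A,\star}^\ast,-R_A^\ast,L_{A^\ast,\star}^\ast,-R_{A^\ast}^\ast)$ being a matched pair), since the maps $L_{A,\star}^\ast=L_A^\ast+R_A^\ast$ and $L_{A^\ast,\star}^\ast=L_{A^\ast}^\ast+R_{A^\ast}^\ast$ appearing in (b) are exactly the ones produced by that theorem. Likewise, Theorem~\ref{thm1} yields the equivalence of the same matched-pair condition (b) with condition (c), that $(A,\circ,\Delta)$ is a Novikov bialgebra. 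Chaining (a)$\Leftrightarrow$(b)$\Leftrightarrow$(c) then gives the result.

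Because both theorems have already been established, there is no genuine obstacle here; the proof is purely a matter of bookkeeping. The one thing I would make sure of is that the representation $(A^\ast,L_{A,\star}^\ast,-R_A^\ast)$ used to assemble the Manin triple in Theorem~\ref{thmm1} is precisely the dual representation of Proposition~\ref{pp:dualrep} (as in Example~\ref{ex:dualrep}), so that the self-dual pairing $\mathcal{B}$ of Eq.~\eqref{bilinear} is the bilinear form implicitly invoked on both ends of the chain; the substantive content, in particular the long index identities, has already been discharged inside the proof of Theorem~\ref{thm1}.
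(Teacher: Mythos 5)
Your proposal is correct and is exactly the paper's argument: the corollary is stated there as an immediate consequence of chaining Theorem~\ref{thmm1} (Manin triple $\Leftrightarrow$ matched pair) with Theorem~\ref{thm1} (matched pair $\Leftrightarrow$ Novikov bialgebra), with the finite-dimensional algebra--coalgebra duality supplying the common hypothesis on $(A^\ast,\cdot)$. Your extra care about the dual representation and the bilinear form of Eq.~\eqref{bilinear} is sound bookkeeping but adds nothing beyond what the paper already relies on.
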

\vspd
\subsection{Manin triples of Novikov algebras and Manin triples of Lie algebras}
\label{ss:manin}

Recall that a bilinear form $(\cdot,\cdot)_L $ on a Lie
algebra $L$ is called {\bf invariant} if
\begin{eqnarray}
( [a, b], c)_L=( a, [b, c])_L \;\; \tforall
a, b, c\in L.
\end{eqnarray}
Then the following statement follows from a routine check.
\begin{pro}\label{Gcorrespond3}
Let $(A, \circ)$ be a Novikov algebra and $(B, \diamond, (\cdot,
\cdot))$ be a quadratic right Novikov algebra. Let
$L=A\otimes B$ be the induced Lie algebra. Suppose that $(A, \circ,
\mathcal{B}(\cdot,\cdot))$ is quadratic.  Define a bilinear form
$(\cdot,\cdot)_L$ on $L$ by
\begin{eqnarray}\label{Lie-bilinear} \notag
( a_1\otimes b_1, a_2\otimes
b_2)_L=\mathcal{B}(a_1,a_2)(b_1, b_2)\;\; \tforall a_1, a_2\in
A, b_1, b_2\in B.
\end{eqnarray}
Then $(\cdot,\cdot)_L$ is a nondegenerate invariant
symmetric bilinear form on the Lie algebra
$L$.
\end{pro}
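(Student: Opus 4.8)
\textit{The plan is} to verify the three defining properties of an invariant form in turn, disposing of symmetry and nondegeneracy first and reserving invariance for the genuine computation. Symmetry is inherited on the nose: on simple tensors $(a_1\otimes b_1,a_2\otimes b_2)_L=\mathcal{B}(a_1,a_2)(b_1,b_2)$, and since both $\mathcal{B}$ and $(\cdot,\cdot)$ are symmetric, so is $(\cdot,\cdot)_L$; bilinearity extends this to all of $L$. For nondegeneracy I would fix a basis $\{e_p\}$ of $A$ with $\mathcal{B}$-dual basis $\{e_p'\}$ (which exists as $\mathcal{B}$ is nondegenerate) and a basis $\{f_q\}$ of $B$ with $(\cdot,\cdot)$-dual basis $\{f_q'\}$; then $\{e_p\otimes f_q\}$ and $\{e_p'\otimes f_q'\}$ are dual under $(\cdot,\cdot)_L$, so $(\cdot,\cdot)_L$ is nondegenerate (i.e. the tensor product of nondegenerate forms on finite-dimensional spaces is nondegenerate).

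For invariance I would reduce, by bilinearity of both the bracket and the form, to simple tensors $u=a_1\otimes b_1$, $v=a_2\otimes b_2$, $w=a_3\otimes b_3$, and prove $([u,v],w)_L=(u,[v,w])_L$. Inserting the bracket of Theorem~\ref{pp:tensorlie} and using the symmetry of the two forms to move each product into the second slot, both sides collapse to
\begin{equation*}
([u,v],w)_L = \mathcal{B}(a_3,a_1\circ a_2)\,(b_3,b_1\diamond b_2) - \mathcal{B}(a_3,a_2\circ a_1)\,(b_3,b_2\diamond b_1),
\end{equation*}
\begin{equation*}
(u,[v,w])_L = \mathcal{B}(a_1,a_2\circ a_3)\,(b_1,b_2\diamond b_3) - \mathcal{B}(a_1,a_3\circ a_2)\,(b_1,b_3\diamond b_2).
\end{equation*}
The engine of the matching is two ``cyclic'' identities that follow purely from symmetry combined with the invariance conditions: from \eqref{Rbilinear1} one gets $(b_i,b_j\diamond b_k)=-(b_j,b_k\diamond b_i)-(b_j,b_i\diamond b_k)$, and from \eqref{bilinear1} one gets $\mathcal{B}(a_i,a_j\circ a_k)=-\mathcal{B}(a_k,a_j\circ a_i)-\mathcal{B}(a_k,a_i\circ a_j)$.

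Applying the $B$-identity to $(b_3,b_1\diamond b_2)$ and $(b_3,b_2\diamond b_1)$ expresses $([u,v],w)_L$ through the terms $(b_1,b_2\diamond b_3)$ and $(b_1,b_3\diamond b_2)$ occurring in $(u,[v,w])_L$, after using the same identity to collapse $(b_2,b_1\diamond b_3)+(b_2,b_3\diamond b_1)=-(b_1,b_2\diamond b_3)$. Comparing the resulting scalar coefficients then reduces the whole claim to the two $A$-side facts $\mathcal{B}(a_1,a_2\circ a_3)=-\mathcal{B}(a_3,a_1\star a_2)$ and $\mathcal{B}(a_3,a_1\circ a_2)=\mathcal{B}(a_1,a_3\circ a_2)$, the latter holding because both sides equal $-\mathcal{B}(a_2,a_1\star a_3)$ and $\star$ is symmetric; both are instances of \eqref{bilinear1} together with symmetry. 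Hence the two displays coincide.

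\textit{I expect the only real obstacle to be organizational bookkeeping}: there are several scalar products to track, and one must route the two invariance relations through the correct arguments so that the $A$-side symmetrization $a\star c$ of \eqref{bilinear1} and the $B$-side symmetrization $b\diamond c+c\diamond b$ of \eqref{Rbilinear1} line up. The conceptual reason this dovetails is the opposite-algebra duality recorded in Remark~\ref{quadratic-opp}: because $A$ is a left and $B$ a right Novikov algebra, their invariance conditions are mirror images. It is worth emphasizing that the deeper Novikov and right Novikov axioms, needed in Theorem~\ref{pp:tensorlie} to make $[\cdot,\cdot]$ a Lie bracket, do not reenter here; only the symmetry and invariance of the two forms are used, which is precisely why the check is routine.
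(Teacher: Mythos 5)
Your proposal is correct and matches the paper's approach: the paper gives no details at all, stating only that the proposition ``follows from a routine check,'' and your verification (symmetry and nondegeneracy via dual bases in finite dimensions, invariance on simple tensors) is exactly that check carried out. All steps are valid --- the two cyclic identities are correct instances of Eqs.~\eqref{bilinear1} and \eqref{Rbilinear1} combined with symmetry, and your ``comparison of coefficients'' is legitimately used as a sufficient (not necessary) condition, since the two $A$-side facts it requires do hold.
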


\begin{defi}\cite{CP}
Let $L$, $L_1$ and $L_2$ be
Lie algebras.  If there is a
nondegenerate invariant symmetric bilinear form
$(\cdot,\cdot)_L$ on $L$ such that
 \begin{itemize}
\item $L_1$ and $L_2$ are Lie subalgebras of
$L$ and $L=L_1\oplus
L_2$ as a direct sum of vector spaces, \item
$L_1$ and $L_2$ are isotropic with respect
to $(\cdot,\cdot)_L$, that is
$(L_i,L_i)_L =0$ for $i=1, 2$,
\end{itemize}
then the triple $(L,L_1, L_2)$ is
called a {\bf Manin triple of Lie algebras}. Two Manin triples
$(L, L_1,L_2)$ and $(L',L_1',L_2')$ are called {\bf isomorphic} if there exists an isomorphism
$\varphi:L\rightarrow L'$ of Lie algebras such that
$\varphi(L_i)=L_i'$ for $i=1,2$ and $(
\varphi(a),\varphi(b))_{L^{'}}=( a,b)_L$ for all $a,b\in
L$.
\end{defi}

Then a straightforward consequence of
Proposition~\ref{Gcorrespond3} is

\begin{pro}\label{Gcorrespond5} Let $(B, \diamond, (\cdot, \cdot))$ be a
quadratic right Novikov algebra. Let $(A, \circ)$ and
$(A^\ast, \bullet)$ be Novikov algebras, and let $A\otimes B$ and $A^\ast\otimes B$ be the induced Lie algebras.  If $(A\oplus
A^\ast, A, A^\ast)$ is a Manin triple of Novikov
algebras, then $(L=(A\oplus A^\ast)\otimes B, A\otimes B,
A^\ast\otimes B)$ is a Manin triple of Lie algebras associated
with the bilinear form
\begin{eqnarray}\label{Manin1}
( a_1\otimes b_1+f\otimes b_2,a_2\otimes b_3+g\otimes
b_4)_L=\langle f, a_2\rangle (b_2, b_3)+\langle g, a_1\rangle
(b_1, b_4),\;\;
\end{eqnarray}
for all $a_1$, $a_2\in A$, $f$, $g\in A^\ast$, and $b_1$, $b_2$,
$b_3$, $b_4\in B$.
\end{pro}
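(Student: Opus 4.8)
The plan is to obtain this as an immediate application of Proposition~\ref{Gcorrespond3} to the total space of the Manin triple. First I would note that, since $(A\oplus A^\ast, A, A^\ast)$ is a Manin triple of Novikov algebras, the total space $\widetilde{A}:=A\oplus A^\ast$ is a Novikov algebra carrying the nondegenerate symmetric invariant bilinear form $\mathcal{B}$ of Eq.~\eqref{bilinear}; in other words $(\widetilde{A},\circ,\mathcal{B})$ is a quadratic Novikov algebra. Thus the pair $\widetilde{A}$ and $B$ meets the hypotheses of Proposition~\ref{Gcorrespond3}, which then produces a nondegenerate invariant symmetric bilinear form $(\cdot,\cdot)_L$ on the induced Lie algebra $L=\widetilde{A}\otimes B=(A\oplus A^\ast)\otimes B$, given by $(x\otimes b, y\otimes b')_L=\mathcal{B}(x,y)(b,b')$ for all $x,y\in\widetilde{A}$ and $b,b'\in B$. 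This already supplies the nondegenerate invariant symmetric form needed; it remains only to match it with Eq.~\eqref{Manin1} and to check the subalgebra and isotropy conditions.

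For the identification of the forms, I would write a general pair of elements as $a_1\otimes b_1+f\otimes b_2$ and $a_2\otimes b_3+g\otimes b_4$ with $a_1,a_2\in A$ and $f,g\in A^\ast$, and expand $(\cdot,\cdot)_L$ by bilinearity into the four terms $\mathcal{B}(a_1,a_2)(b_1,b_3)$, $\mathcal{B}(a_1,g)(b_1,b_4)$, $\mathcal{B}(f,a_2)(b_2,b_3)$, and $\mathcal{B}(f,g)(b_2,b_4)$. Because $A$ and $A^\ast$ are isotropic for $\mathcal{B}$ by the definition of the Manin triple form in Eq.~\eqref{bilinear}, the first and last terms vanish, while $\mathcal{B}(a_1,g)=\langle g,a_1\rangle$ and $\mathcal{B}(f,a_2)=\langle f,a_2\rangle$; this recovers Eq.~\eqref{Manin1} exactly.

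Finally I would verify the structural conditions. As vector spaces, $L=(A\otimes B)\oplus(A^\ast\otimes B)$. That $A\otimes B$ and $A^\ast\otimes B$ are Lie subalgebras follows from the bracket formula in Theorem~\ref{pp:tensorlie}: since $A$ and $A^\ast$ are Novikov subalgebras of $\widetilde{A}$, the bracket of two elements of $A\otimes B$ (resp. $A^\ast\otimes B$) stays in $A\otimes B$ (resp. $A^\ast\otimes B$), and the restricted bracket is precisely the induced Lie bracket from $A$ (resp. $A^\ast$) and $B$. Isotropy is immediate from the expansion above, since $(a_1\otimes b_1,a_2\otimes b_3)_L=\mathcal{B}(a_1,a_2)(b_1,b_3)=0$ and likewise for $A^\ast\otimes B$. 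This establishes all the conditions, so $(L, A\otimes B, A^\ast\otimes B)$ is a Manin triple of Lie algebras. I do not expect any genuine obstacle here: invariance and nondegeneracy are inherited wholesale from Proposition~\ref{Gcorrespond3}, and the only point demanding care is the bookkeeping that isolates the two surviving cross terms when matching the restricted form with Eq.~\eqref{Manin1}.
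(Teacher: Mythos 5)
Your proposal is correct and follows exactly the paper's route: the paper likewise derives Proposition~\ref{Gcorrespond5} as a direct consequence of Proposition~\ref{Gcorrespond3}, applied to the quadratic Novikov algebra $(A\oplus A^\ast,\circ,\mathcal{B})$ coming from the Manin triple together with the quadratic right Novikov algebra $(B,\diamond,(\cdot,\cdot))$. Your expansion of the four cross terms, the vanishing of $\mathcal{B}$ on $A\otimes A$ and $A^\ast\otimes A^\ast$ giving both Eq.~\eqref{Manin1} and the isotropy, and the subalgebra check via Theorem~\ref{pp:tensorlie} are precisely the routine verifications the paper leaves implicit in calling the result "a straightforward consequence."
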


Recall the classical Manin triple characterization of Lie bialgebras.

\begin{pro} \label{equiv-Lie bialgebra} \cite{CP}
Consider a finite-dimensional Lie algebra
$(L, [\cdot, \cdot]_{L})$ and a Lie coalgebra
$(L, \delta)$, so that the linear dual $\delta^*:L^*\otimes L^*\rightarrow L^*$ defines a Lie algebra
$(L^*,[\cdot,\cdot]_{L^*})$. The triple $(L,[\cdot,\cdot],\delta)$ is a Lie bialgebra if and only if
 $(L\oplus L^*,
L, L^*)$ is a Manin triple of Lie algebras associated
with the bilinear form defined by Eq.~\eqref{bilinear}.
\end{pro}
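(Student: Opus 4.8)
The plan is to realize the double $D=L\oplus L^*$ as a Lie algebra and to show that the Jacobi identity on $D$, together with the invariance of the pairing $\mathcal{B}$ from Eq.~\eqref{bilinear}, is equivalent to the cocycle condition defining the Lie bialgebra; this mirrors the matched-pair route already used for Novikov algebras in Theorems~\ref{thmm1} and~\ref{thm1}. First I would record that $\mathcal{B}$ is automatically symmetric and nondegenerate and that $L$ and $L^*$ are isotropic for it, so that a Manin triple structure on $D$ amounts precisely to the existence of a Lie bracket $[\cdot,\cdot]_D$ that restricts to $[\cdot,\cdot]_L$ on $L$ and to $[\cdot,\cdot]_{L^*}=\delta^*$ on $L^*$ and for which $\mathcal{B}$ is invariant.

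The key preliminary step is that invariance of $\mathcal{B}$, combined with the requirement that $L$ and $L^*$ be subalgebras, forces the mixed bracket. Indeed, for $a\in L$ and $f\in L^*$, pairing $[a,f]_D$ against arbitrary $b\in L$ and $g\in L^*$ and using $\mathcal{B}([x,y]_D,z)=\mathcal{B}(x,[y,z]_D)$ determines both components of $[a,f]_D$ uniquely by nondegeneracy, yielding the coadjoint formula $[a,f]_D=\ad^*_a f-\ad^*_f a$, where $\ad^*_a\colon L^*\to L^*$ is dual to $\ad_a$ and $\ad^*_f\colon L\to L$ is dual to $\ad_f$ acting on $L^*$. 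Thus a Manin triple on $D$ is the same datum as this canonical bracket satisfying the Jacobi identity, that is, $(L,L^*,\ad^*,\ad^*)$ being a matched pair of Lie algebras.

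It then remains to expand the Jacobi identity for $[\cdot,\cdot]_D$ and to sort the terms according to how many arguments lie in $L$ versus in $L^*$. The all-$L$ and all-$L^*$ components are exactly Jacobi for $[\cdot,\cdot]_L$ and for $[\cdot,\cdot]_{L^*}$, which hold by hypothesis. The two genuinely mixed components (two arguments in $L$ and one in $L^*$, and its mirror image) are, after pairing against a test element and transporting the $\ad^*$'s back to their transposes, precisely the assertion that the transpose of the cobracket $\delta$ intertwines the brackets as in the cocycle condition $\delta([a,b])=(\ad_a\ot \id+\id\ot\ad_a)\delta(b)-(\ad_b\ot\id+\id\ot\ad_b)\delta(a)$. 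Establishing this dictionary — reading the matched-pair compatibility as the transpose of the $1$-cocycle condition — yields both implications at once: a Manin triple produces the cocycle condition, hence a Lie bialgebra, and conversely, defining $[\cdot,\cdot]_D$ by the coadjoint formula from a Lie bialgebra gives a Lie bracket on $D$ for which $\mathcal{B}$ is invariant, i.e.\ a Manin triple.

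The main obstacle is exactly this last dictionary: carefully dualizing between the bracket-level matched-pair identities on $D$ and the comultiplication-level cocycle condition on $\delta$, while keeping track of the flip $\tau$ and of the signs introduced by the coadjoint convention $\langle\ad^*_a f,v\rangle=-\langle f,\ad_a v\rangle$. Because $\delta$ is skewsymmetric and its dual bracket enters both mixed Jacobi components, these two components are not independent; showing that they collapse to the single cocycle identity is where the bookkeeping must be carried out with care. Everything else is a routine verification parallel to the associative case in~\cite{Bai2} and the Lie case in~\cite{CP}.
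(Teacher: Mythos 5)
Your proposal is correct, and it is essentially the canonical argument: the paper itself states Proposition~\ref{equiv-Lie bialgebra} without proof, citing~\cite{CP}, and the proof given there is exactly your route --- isotropy plus invariance of the form in Eq.~\eqref{bilinear} forces the mixed bracket $[a,f]=\ad^*_af-\ad^*_fa$, the pure components of the Jacobi identity on $L\oplus L^*$ hold by hypothesis, the $L^*$-component of the mixed Jacobi identity is automatic from the coadjoint representation, and the remaining component is the transpose of the $1$-cocycle condition (with its mirror collapsing to the same identity by the self-duality of the bialgebra axioms). Your observation that this mirrors the matched-pair strategy the paper uses for the Novikov analogues (Theorems~\ref{thmm1} and~\ref{thm1}) is also accurate.
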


Let $(B, \diamond, (\cdot, \cdot))$ be a quadratic right Novikov
algebra. Through the linear isomorphism $\varphi: B\rightarrow B^\ast$ given by $\langle \varphi(a), b\rangle=(a,b)$ for all $a, b\in B$, a right Novikov algebra $(B^\ast,\diamond')$ is obtained by transporting of structure:
\vspb
\begin{equation}\label{eq:right}
f\diamond' g=\varphi(\varphi^{-1}(f)\diamond \varphi^{-1}(g))\;\;\tforall f,g\in
B^\ast.
\vspa
\end{equation}
Let $(A^\ast, \bullet)$ be a Novikov algebra. Then it
is straightforward to show the induced Lie algebra $
A^\ast\otimes B$ from $(A^\ast, \bullet)$ and $(B,\diamond)$ is
isomorphic to the induced Lie algebra $A^\ast\otimes B^\ast$ from
$(A^\ast, \bullet)$ and $(B^\ast,\diamond')$.
Hence the next conclusion follows immediately.

\begin{lem}\label{cor:equiv}
Assume the conditions in Proposition~\ref{Gcorrespond5} and let the Lie algebra $L^*=A^*\otimes
B^*$ be induced from $(A^\ast, \bullet)$ and $(B^\ast,\diamond')$. Then
$(L\oplus L^*=(A\otimes B)\oplus (A^*\otimes B^*),
L=A\otimes B,L^*=A^*\otimes B^*)$ is a Manin triple of
Lie algebras associated to the bilinear form defined by
Eq.~\eqref{bilinear}. Moreover, this Manin triple is
isomorphic to the Manin triple $((A\oplus A^\ast)\otimes B,
A\otimes B, A^\ast\otimes B)$  associated with the bilinear form
defined by Eq.~\eqref{Manin1}.
\end{lem}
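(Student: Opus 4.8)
The plan is to produce the Manin triple $(L\oplus L^\ast,L,L^\ast)$ by transporting, along one explicit isomorphism, the Manin triple $((A\oplus A^\ast)\otimes B,A\otimes B,A^\ast\otimes B)$ already furnished by Proposition~\ref{Gcorrespond5}. Concretely, I would set
$$\Phi:=\id_{A\otimes B}\oplus(\id_{A^\ast}\otimes\varphi):(A\oplus A^\ast)\otimes B=(A\otimes B)\oplus(A^\ast\otimes B)\longrightarrow (A\otimes B)\oplus(A^\ast\otimes B^\ast)=L\oplus L^\ast,$$
which is a linear isomorphism because $\varphi:B\to B^\ast$ is one. By construction $\Phi$ carries $A\otimes B$ identically onto $L$ and carries $A^\ast\otimes B$ onto $L^\ast=A^\ast\otimes B^\ast$ via $\id_{A^\ast}\otimes\varphi$, so the subspace condition of a Manin triple will be immediate.

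First I would verify that $\Phi$ is an isomorphism of Lie algebras, where $L\oplus L^\ast$ carries the bracket pushed forward from $(A\oplus A^\ast)\otimes B$. On the summand $A\otimes B$ the map is the identity, and since $A$ is a Novikov subalgebra of the double, that summand lands in $L$ with exactly the induced bracket. On the summand $A^\ast\otimes B$, the relevant fact is the transport-of-structure observation preceding the lemma: from the definition~\eqref{eq:right} of $\diamond'$ one has $\varphi(b_1\diamond b_2)=\varphi(b_1)\diamond'\varphi(b_2)$, so $\id_{A^\ast}\otimes\varphi$ intertwines the induced bracket from $(A^\ast,\bullet)$ and $(B,\diamond)$ with the induced bracket from $(A^\ast,\bullet)$ and $(B^\ast,\diamond')$; hence this summand lands in $L^\ast$ with precisely the bracket specified in the statement. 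As the bracket on $L\oplus L^\ast$ is declared to be the pushforward, $\Phi$ is tautologically a Lie algebra isomorphism and $L$, $L^\ast$ are Lie subalgebras with the asserted structures.

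The one genuine computation, and the step I expect to be the main (though routine) point, is matching the two bilinear forms under $\Phi$. Writing $x=a_1\otimes b_1+f\otimes b_2$ and $y=a_2\otimes b_3+g\otimes b_4$ with $a_i\in A$, $f,g\in A^\ast$, $b_i\in B$, I would use the canonical pairing between $L=A\otimes B$ and its dual $L^\ast=A^\ast\otimes B^\ast$, namely $\langle f\otimes\psi,a\otimes b\rangle=\langle f,a\rangle\langle\psi,b\rangle$, together with $\langle\varphi(b'),b\rangle=(b',b)$. Then Eq.~\eqref{bilinear} evaluated on $\Phi(x)=a_1\otimes b_1+f\otimes\varphi(b_2)$ and $\Phi(y)=a_2\otimes b_3+g\otimes\varphi(b_4)$ gives
$$\langle f,a_2\rangle(b_2,b_3)+\langle g,a_1\rangle(b_1,b_4),$$
which is exactly Eq.~\eqref{Manin1} evaluated on $x$ and $y$. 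The care needed here is only in keeping track of how $\varphi$ folds the quadratic form $(\cdot,\cdot)$ on $B$ into the canonical duality between $L$ and $L^\ast$, so that the ad hoc form~\eqref{Manin1} is reproduced by the standard pairing~\eqref{bilinear}.

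Finally I would conclude by transport: since $\Phi$ is a Lie algebra isomorphism sending the isotropic subalgebras $A\otimes B$, $A^\ast\otimes B$ onto $L$, $L^\ast$ and identifying~\eqref{Manin1} with~\eqref{bilinear}, every Manin triple axiom (symmetry, invariance, nondegeneracy of the form, and isotropy of the two summands) carries over from Proposition~\ref{Gcorrespond5} to $(L\oplus L^\ast,L,L^\ast)$. This at once shows that $(L\oplus L^\ast,L,L^\ast)$ is a Manin triple of Lie algebras associated with the bilinear form~\eqref{bilinear} and that $\Phi$ is the desired isomorphism of Manin triples.
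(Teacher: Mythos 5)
Your proposal is correct and takes essentially the same route as the paper: the paper's own proof consists of introducing the transport isomorphism $\varphi:B\to B^\ast$ with Eq.~\eqref{eq:right}, noting that $\id_{A^\ast}\otimes\varphi$ identifies the induced Lie algebra $A^\ast\otimes B$ with $A^\ast\otimes B^\ast$, and then declaring that the lemma ``follows immediately'' by transporting the Manin triple of Proposition~\ref{Gcorrespond5}. You have simply made explicit the details the paper leaves implicit, namely that $\Phi=\id_{A\otimes B}\oplus(\id_{A^\ast}\otimes\varphi)$ intertwines the brackets and carries the form of Eq.~\eqref{Manin1} to the canonical form of Eq.~\eqref{bilinear}.
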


\begin{rmk} \label{rk:quadnov}
The above conclusion gives the reason why $B$  is required to be a
quadratic right Novikov algebra in Theorem \ref{Gcorrespond6}, that is, such a condition guarantees that
 $((A\oplus A^\ast)\otimes B, A\otimes B,
A^\ast\otimes B)$ is isomorphic to $(L\oplus L^*=(A\otimes B)\oplus (A^*\otimes B^*),
L=A\otimes B, L^*=A^*\otimes B^*)$ as Manin triples of Lie algebras.
\end{rmk}

Then it is direct to verify the commutativity of the right square of diagram~\eqref{eq:bigdiag}:
\begin{pro} \label{eq:bialgdouble}
Assume the conditions in Theorem~\ref{Gcorrespond6}, with $(B, \diamond, (\cdot, \cdot))$ being a
quadratic right Novikov algebra. Then the resulting Lie bialgebra
$(L, [\cdot, \cdot], \delta)$ coincides with the one
obtained from the Manin triple $((A\otimes B) \oplus (A^\ast
\otimes B^\ast), A\otimes B, A^\ast\otimes B^\ast)$ of Lie
algebras given in Lemma ~\ref{cor:equiv}, leading to the
commutative diagram$:$ \vspb
$$        \xymatrix{
 \text{$(A, \circ, \Delta)$}\atop\text{a Novikov bialgebra}\ar@{<->}[rr]^-{\rm Cor. \ref{Nov-equi}}  \ar[d]_{\rm Thm. \ref{Gcorrespond6}}&&
\text{$(A\oplus A^\ast, A, A^\ast)$} \atop {\txt{\tiny a Manin triple of  Novikov algebras}} \ar[d]^(.6){\rm Prop. \ref{Gcorrespond5}, ~Lem. \ref{cor:equiv}}\\
 \text{$(L=A\otimes B, [\cdot, \cdot], \delta)$}\atop\text{a Lie bialgebra} \ar@{<->}[rr]^-{\rm Prop. \ref{equiv-Lie bialgebra}}&& \text{$((A\otimes B) \oplus (A^\ast \otimes B^\ast),
A\otimes B, A^\ast\otimes B^\ast)$}\atop\text{a Manin triple of Lie algebras}  }
\vspb
$$
\end{pro}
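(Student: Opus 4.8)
The plan is to reduce the statement to an equality of cocommutators. Both Lie bialgebra structures on $L=A\ot B$ are built on the \emph{same} Lie algebra: the bracket produced by Theorem~\ref{Gcorrespond6} is the induced bracket of Theorem~\ref{pp:tensorlie}, and the first component $A\ot B$ of the Manin triple in Lemma~\ref{cor:equiv} carries precisely this same induced bracket. So the two Lie algebras agree and only the cobrackets remain to be matched. Since $B$ is a quadratic right Novikov algebra, $B=B_0$ is finite-dimensional and all completed tensor products reduce to ordinary ones. Write $\delta_{\mathrm{aff}}$ for the cobracket of Eq.~\eqref{GLiebi1} and $\delta_{\mathrm{M}}$ for the cobracket of the Manin-triple Lie bialgebra. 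By Proposition~\ref{equiv-Lie bialgebra}, $\delta_{\mathrm{M}}$ is, under the pairing in Eq.~\eqref{bilinear}, the transpose of the Lie bracket on $L^\ast=A^\ast\ot B^\ast$, which by Lemma~\ref{cor:equiv} is the induced bracket from $(A^\ast,\bullet)$ and $(B^\ast,\diamond')$. Hence it suffices to verify, for all $a\ot b\in L$ and $f\ot\beta,\,g\ot\gamma\in L^\ast$, the scalar identity $\langle\delta_{\mathrm{aff}}(a\ot b),(f\ot\beta)\ot(g\ot\gamma)\rangle=\langle a\ot b,[f\ot\beta,g\ot\gamma]_{L^\ast}\rangle$, and then conclude by nondegeneracy of the pairing.

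For the left-hand side I would unwind Eq.~\eqref{GLiebi1}. Writing $\Delta_A(a)=\sum a_{(1)}\ot a_{(2)}$ and $\Delta_B(b)=\sum b_{(1)}\ot b_{(2)}$ (finite sums), the interleaving product of Theorem~\ref{Constr-Lie coalgebra} gives $\delta_{\mathrm{aff}}(a\ot b)=\sum (a_{(1)}\ot b_{(1)})\ot(a_{(2)}\ot b_{(2)})-\sum(a_{(2)}\ot b_{(2)})\ot(a_{(1)}\ot b_{(1)})$. Because the $A$- and $B$-factors are tensored independently, pairing against $(f\ot\beta)\ot(g\ot\gamma)$ splits as a product of an $A$-pairing and a $B$-pairing. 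On the $A$-side the two summands give $\langle\Delta_A(a),f\ot g\rangle$ and $\langle\Delta_A(a),g\ot f\rangle$, which by the defining property of $\bullet$ equal $\langle a,f\bullet g\rangle$ and $\langle a,g\bullet f\rangle$ respectively.

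The heart of the argument is matching the $B$-side. Using the self-duality $\varphi\colon B\to B^\ast$ of Eq.~\eqref{eq:right}, write $\beta=\varphi(p)$ and $\gamma=\varphi(q)$, so that $\beta\diamond'\gamma=\varphi(p\diamond q)$ and, by symmetry of the form, $\langle b,\beta\diamond'\gamma\rangle=(b,p\diamond q)$. On the other hand, the $B$-pairing coming from $\delta_{\mathrm{aff}}$ is $\sum\langle b_{(1)},\beta\rangle\langle b_{(2)},\gamma\rangle=\sum (b_{(1)},p)(b_{(2)},q)=(\Delta_B(b),p\ot q)$, which by the defining relation Eq.~\eqref{eq:coproduct-self} of $\Delta_B$ equals $(b,p\diamond q)$. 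Thus the two $B$-pairings coincide; combining with the $A$-side (and treating the flipped term the same way, with $p,q$ interchanged) yields
$$\langle\delta_{\mathrm{aff}}(a\ot b),(f\ot\beta)\ot(g\ot\gamma)\rangle=\langle a,f\bullet g\rangle\langle b,\beta\diamond'\gamma\rangle-\langle a,g\bullet f\rangle\langle b,\gamma\diamond'\beta\rangle,$$
which is exactly the expansion of $\langle a\ot b,[f\ot\beta,g\ot\gamma]_{L^\ast}\rangle$. Nondegeneracy then gives $\delta_{\mathrm{aff}}=\delta_{\mathrm{M}}$, and with the common Lie bracket this proves the two Lie bialgebras coincide, whence the diagram commutes.

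I expect the main obstacle to be bookkeeping rather than conceptual: one must keep straight the three pairings in play — the $A$--$A^\ast$ duality, the $B$--$B^\ast$ duality, and the quadratic form $(\cdot,\cdot)$ on $B$ — which are linked through $\varphi$ and through the defining relations of $\bullet$ and $\Delta_B$, and one must check that the antisymmetrization $\id-\widehat{\tau}$ appearing in $\delta_{\mathrm{aff}}$ lines up correctly with the skewsymmetrizing difference of the two terms in the induced bracket on $L^\ast$. No new idea is needed beyond Theorems~\ref{Gcorrespond6} and~\ref{Constr-Lie coalgebra}, Lemma~\ref{cor:equiv}, and the self-duality $\varphi$; once the pairings are tracked consistently the computation collapses to the factored form above.
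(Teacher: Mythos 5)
Your proposal is correct, and it is essentially the argument the paper intends: the paper states Proposition~\ref{eq:bialgdouble} without proof, asserting the commutativity is ``direct to verify,'' and your computation supplies exactly that direct check. Reducing the claim to the scalar identity $\langle\delta_{\mathrm{aff}}(a\ot b),(f\ot\beta)\ot(g\ot\gamma)\rangle=\langle a\ot b,[f\ot\beta,g\ot\gamma]_{L^\ast}\rangle$ and verifying it through the defining relation~\eqref{eq:coproduct-self} of $\Delta_B$ and the transport map $\varphi$ of Eq.~\eqref{eq:right} is sound, with the needed finiteness hypotheses ($A$ finite-dimensional and $B=B_0$) all in force.
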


\subsection{Novikov Yang-Baxter equation, $\mathcal O$-operators of Novikov algebras and \ndend algebras}
\label{ss:nybe}
Let $(A,\circ)$ be a Novikov algebra and $r\in A\otimes A$. In the
following, we consider a special class of Novikov bialgebras $(A,
\circ, \Delta_r)$ when $\Delta_r: A\rightarrow A\otimes A$ is
defined by
\begin{eqnarray}\label{co1}
\Delta_r(a)\coloneqq (L_A(a)\otimes \id+\id\otimes L_{A,\star}(a))r\quad \tforall  a\in A.
\end{eqnarray}
This resembles the coboundary Lie bialgebras \cite{CP}.

Let $r=\sum_\alpha x_\alpha \otimes y_\alpha \in A\otimes A$ and
$r'=\sum_\beta x_\beta '\otimes y_\beta '\in A\otimes A$. Set
$$r_{12}\circ r'_{13}:=\sum_{\alpha,\beta}x_\alpha \circ x_\beta'\otimes y_\alpha\otimes
y_\beta',\;\; r_{12}\circ r'_{23}:=\sum_{\alpha,\beta} x_\alpha\otimes y_\alpha\circ
x_\beta'\otimes y_\beta',\;\;r_{13}\circ r'_{23}:=\sum_{\alpha,\beta} x_\alpha\otimes
x_\beta'\otimes y_\alpha\circ y_\beta',$$
$$r_{13}\circ r'_{12}:=\sum_{\alpha,\beta}x_\alpha\circ x_\beta'\otimes y_\beta'\otimes
y_\alpha,\;\;r_{23}\circ r'_{13}:=\sum_{\alpha,\beta} x_\beta'\otimes x_\alpha\otimes
y_\alpha\circ y_\beta',$$
$$r_{12}\star r'_{23}:=\sum_{\alpha,\beta} x_\alpha\otimes y_\alpha\star
x_\beta'\otimes y_\beta',\;\;r_{13}\star r'_{23}:=\sum_{\alpha,\beta} x_\alpha\otimes
x_\beta'\otimes y_\alpha\star y_\beta'.$$

\begin{lem}\label{coblem1}\label{lem:cob2}
Let $(A,\circ)$ be a Novikov algebra and $r\in A\otimes A$. Define $\Delta_r: A\rightarrow A\otimes A$ by Eq.~\eqref{co1}.
\begin{enumerate}
\item \label{it:coba}$\Delta_r$ satisfies Eq.~{\rm (\ref{Lb5})} if and only if
\begin{equation}
\label{cob4}(\id\otimes (L_A(b\circ a)+L_A(a)L_A(b))+L_{A,\star}(a)\otimes L_{A,\star}(b))(r+\tau r)=0\;\;\tforall  a,b\in A.
\end{equation}
\item \label{it:cobb}
$\Delta_r$ satisfies Eq.~{\rm (\ref{Lb6})} if and only if
\begin{equation}
\label{cob2}
(L_{A,\star}(a)\otimes L_{A,\star}(b)-L_{A,\star}(b)\otimes L_{A,\star}(a))(r+\tau r)=0\;\;\tforall  a,b\in A.
\end{equation}
\item \label{it:cobc}
$\Delta_r$ satisfies Eq.~{\rm (\ref{Lb7})} if and only if
{\small
\begin{equation}
\label{cob3}
\begin{split}
&\Big(-L_{A,\star}(b)\otimes R_A(a)+L_{A,\star}(a)\otimes R_A(b)+R_A(a)\otimes L_A(b)-R_A(b)\otimes L_A(a)+\id\otimes \big(L_A(a)L_A(b)\\
& \ -L_A(b)L_A(a)\big)-\big(L_A(a)L_A(b)-L_A(b)L_A(a)\big)\otimes \id\Big)(r+\tau r)=0 \tforall  a,b\in A.
\end{split}
\end{equation}}
\item \label{it:cobd}
$\Delta_r$ satisfies Eq.~{\rm (\ref{Lc3})} if and only if
{\small
    \begin{eqnarray}
&&\Big(L_A(a)\otimes \id\otimes \id-\id\otimes L_A(a)\otimes \id\Big)\Big((\tau r)_{12}\circ r_{13}+r_{12}\circ r_{23}+r_{13}\star r_{23}\Big)\nonumber\\
&&\hspace{0.3cm}+\Big((\id\otimes L_A(a)\otimes
\id)(r+\tau r)_{12}\Big)\circ r_{23}-\Big((L_A(a)\otimes \id\otimes \id)r_{13}\Big)\circ (r+\tau r)_{12}+\Big(\id\otimes \id\otimes
L_{A,\star}(a)\Big)\label{cob6}\\
&&\hspace{0.3cm}\Big(r_{23}\circ r_{13}-r_{13}\circ
r_{23}-(\id\otimes\id\otimes \id-\tau\otimes \id)(r_{13}\circ
r_{12}+r_{12}\star r_{23})\Big)=0~~\tforall  a\in A.
    \nonumber
\end{eqnarray}}
\item \label{it:cobe}
$\Delta_r$ satisfies Eq.~{\rm (\ref{Lc4})} if and only if
        \begin{equation}
            \label{cob7}(\id\otimes \id\otimes \id-\id\otimes \tau)(\id\otimes \id\otimes
            L_{A,\star}(a))(r_{13}\circ (\tau r)_{23} -r_{12}\star
            r_{23}-r_{13}\circ r_{12})=0\;\;\tforall  a\in A.
\end{equation}
\end{enumerate}
\end{lem}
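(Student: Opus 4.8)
The plan is to prove all five equivalences by directly substituting the definition \eqref{co1} of $\Delta_r$ into the respective condition and simplifying, exploiting the fact that each resulting computation is an \emph{exact} identity, so that both directions of the ``if and only if'' are obtained at once. Throughout I would use the compatibility of the flip with operators, $\tau(f\otimes g)=(g\otimes f)\tau$, together with the operator identities furnished by the adjoint representation $(A,L_A,R_A)$ of $(A,\circ)$, namely
\begin{gather*}
L_A(a\circ b)=R_A(b)L_A(a),\qquad R_A(a)R_A(b)=R_A(b)R_A(a),\\
L_A(a\circ b-b\circ a)=L_A(a)L_A(b)-L_A(b)L_A(a),\\
L_A(a)R_A(b)-R_A(b)L_A(a)=R_A(a\circ b)-R_A(b)R_A(a),
\end{gather*}
and the definitions $a\star b=a\circ b+b\circ a$, $L_{A,\star}=L_A+R_A$. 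A useful consequence, obtained by combining $R_A(a)L_A(b)=L_A(b\circ a)$ (a relabeling of the first identity) with the left-symmetry identity, is $L_{A,\star}(a)L_A(b)=L_{A,\star}(b)L_A(a)$ for all $a,b\in A$.

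For parts (a)--(c), the conditions \eqref{Lb5}--\eqref{Lb7} are linear in $\Delta$, so substituting $\Delta_r$ yields operators applied to $r$ and to $\tau r$. I would expand each term, use $L_A(a\circ b)=R_A(b)L_A(a)$ to cancel the ``$L_A(a\circ b)$'' contribution on one side against the ``$R_A(b)L_A(a)$'' contribution on the other, and then regroup. The key point is that all surviving terms assemble into a single operator applied to the symmetric combination $r+\tau r$. For instance, in part (b) the quadratic terms combine into $(L_{A,\star}(a)\otimes L_{A,\star}(b)-L_{A,\star}(b)\otimes L_{A,\star}(a))(r+\tau r)$, which is exactly \eqref{cob2}, while the remaining terms cancel identically because of the consequence $L_{A,\star}(a)L_A(b)=L_{A,\star}(b)L_A(a)$ recorded above. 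Parts (a) and (c) proceed along the same lines, producing \eqref{cob4} and \eqref{cob3} respectively, the latter also requiring the fourth identity to rewrite the $R_A(a\circ b)$-type terms.

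For parts (d) and (e), the coalgebra conditions \eqref{Lc3} and \eqref{Lc4} are quadratic in $\Delta$, so substituting $\Delta_r$ and applying $\Delta_r$ once more to a tensor leg produces expressions quadratic in $r$. I would collect the resulting three-fold tensors into the combinations $r_{12}\circ r_{13}$, $r_{12}\circ r_{23}$, $r_{13}\circ r_{23}$, $r_{13}\circ r_{12}$, $r_{23}\circ r_{13}$, $r_{12}\star r_{23}$, $r_{13}\star r_{23}$ (and their $\tau r$-variants such as $(\tau r)_{12}\circ r_{13}$ and $r_{13}\circ(\tau r)_{23}$) introduced before the statement, and then apply the Novikov identities to reduce composite operators acting on a single slot, condensing the two sides into \eqref{cob6} and \eqref{cob7}. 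The main obstacle will be precisely this bookkeeping: tracking which of the three tensor slots each operator occupies, and keeping straight the several twist maps placed in different positions that appear upon expanding $(\tau\otimes\id)(\id\otimes\Delta)\tau\Delta(a)$ and the antisymmetrizations in \eqref{Lc3}. Since every manipulation is a reversible identity, the equivalences again hold in both directions, completing the proof.
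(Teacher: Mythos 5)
Your proposal is correct and follows essentially the same route as the paper: substitute the definition of $\Delta_r$ into each of Eqs.~\eqref{Lb5}--\eqref{Lb7}, \eqref{Lc3}, \eqref{Lc4}, expand (the paper does this with $r=\sum_\alpha x_\alpha\otimes y_\alpha$ in element form, you in operator form, which is the same computation), and use the Novikov identities to show that the defect of each condition is \emph{identically equal} to the operator expression in the corresponding equation \eqref{cob4}--\eqref{cob7}, so both directions of the equivalence follow at once. Your auxiliary identity $L_{A,\star}(a)L_A(b)=L_{A,\star}(b)L_A(a)$ is a valid consequence of the Novikov axioms and is precisely what makes the non-quadratic terms cancel in part~\eqref{it:cobb}, matching the cancellations carried out explicitly in the paper's computation.
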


\begin{proof}
Set $r=\sum_\alpha x_\alpha \otimes y_\alpha$ and let $a, b\in A$. Then
\vspb
$$\Delta_r(a)=\sum_\alpha\Big((a\circ x_\alpha)\otimes y_\alpha+x_\alpha\otimes (a\star
y_\alpha)\Big).
\vspc
$$

\noindent
\eqref{it:coba}. We have
\vspb
\begin{eqnarray*}
&&\Delta_r(b\circ a)-(R_A(a)\otimes \id)\Delta_r(b)-(\id\otimes L_{A,\star}(b))(\Delta_r(a)+\tau\Delta_r(a))\\
&&=\sum_\alpha\Big((b\circ a)\circ x_\alpha \otimes y_\alpha+x_\alpha\otimes (b\circ
a)\star y_\alpha
-(b\circ x_\alpha)\circ a\otimes y_\alpha-x_\alpha\circ a\otimes b\star y_\alpha\\
&&\qquad -a\circ x_\alpha\otimes b\star y_\alpha -x_\alpha\otimes
(a\star y_\alpha)\star b -y_\alpha\otimes b\star (a\circ x_\alpha)
-a\star y_\alpha\otimes b\star x_\alpha\Big)\\
&&=\sum_\alpha\Big((b\circ a)\circ x_\alpha-(b\circ x_\alpha)\circ a)\otimes y_\alpha+x_\alpha\otimes ((b\circ a)\star y_\alpha-(a\star y_\alpha)\star b)\\
&&\qquad -a\star x_\alpha\otimes b\star y_\alpha-y_\alpha\otimes
b\star (a\circ x_\alpha)
-a\star y_\alpha\otimes b\star x_\alpha\Big)\\
&&=\sum_\alpha\Big( -x_\alpha\otimes ((b\circ a)\circ y_\alpha+a\circ (b\circ y_\alpha))-x_\alpha\star a\otimes b\star y_\alpha\\
&&\qquad -y_\alpha\otimes (a\circ (b\circ x_\alpha)+(b\circ
a)\circ x_\alpha)
-a\star y_\alpha\otimes b\star x_\alpha\Big)\\
&&=-\Big(\id\otimes (L_A(b\circ a)+L_A(a)L_A(b))\Big)(r+\tau
r)-\Big(L_{A,\star}(a)\otimes L_{A,\star}(b)\Big)(r+\tau r).
\end{eqnarray*}
Hence Eq.~(\ref{Lb5}) holds if and only if Eq.~(\ref{cob4}) holds.

One similarly verifies Items~\eqref{it:cobb} and \eqref{it:cobc}.

\smallskip

\noindent
\eqref{it:cobd}. We further have
\begin{eqnarray*}
\lefteqn{(\id\otimes \Delta_r)\Delta_r(a)-(\tau\otimes \id)(\id\otimes \Delta_r)\Delta_r(a)-(\Delta_r\otimes \id)\Delta_r(a)+(\tau\otimes \id)(\Delta_r\otimes \id)\Delta_r(a)}\\
&=&\sum_{\alpha,\beta}\Big(a\circ x_\alpha\otimes y_\alpha\circ x_\beta\otimes y_\beta-y_\alpha\circ x_\beta\otimes a\circ x_\alpha\otimes y_\beta+a\circ x_\alpha\otimes x_\beta\otimes y_\alpha\star y_\beta\\
&&\qquad +x_\alpha\otimes (a\star y_\alpha)\circ x_\beta\otimes y_\beta-x_\beta\otimes y_\beta\star (a\circ x_\alpha)\otimes y_\alpha+y_\beta\otimes (a\circ x_\alpha)\circ x_\beta\otimes y_\alpha\\
&&\qquad+x_\alpha\otimes x_\beta\otimes (a\star y_\alpha)\star y_\beta-x_\beta\otimes x_\alpha\otimes (a\star y_\alpha)\star y_\beta-x_\beta\otimes a\circ x_\alpha \otimes y_\alpha\star y_\beta\\
&&\qquad-((a\star y_\alpha)\circ x_\beta\otimes x_\alpha\otimes y_\beta+(a\circ x_\alpha)\circ x_\beta\otimes y_\beta\otimes y_\alpha-(a\circ x_\alpha)\star y_\beta\otimes x_\beta\otimes y_\alpha)\\
&&\qquad+(x_\alpha\star y_\beta\otimes x_\beta\otimes a\star y_\alpha-x_\alpha\circ x_\beta\otimes y_\beta\otimes a\star y_\alpha)\\
&&\qquad-(x_\beta\otimes x_\alpha\star y_\beta\otimes a\star y_\alpha+y_\beta\otimes x_\alpha\circ x_\beta\otimes a\star y_\alpha)\Big)\\
&=&\sum_{\alpha,\beta}\Big(a\circ x_\alpha\otimes y_\alpha\circ x_\beta\otimes y_\beta+a\circ x_\alpha\otimes x_\beta\otimes y_\alpha\star y_\beta+x_\alpha\otimes ((a\star y_\alpha)\circ x_\beta\\
&&\qquad-(a\circ x_\beta)\star y_\alpha)\otimes y_\beta+x_\alpha\otimes x_\beta\otimes ((a\star y_\alpha)\star y_\beta-(a\star y_\beta)\star y_\alpha)\\
&&\qquad-y_\alpha\circ x_\beta\otimes a\circ x_\alpha\otimes y_\beta-x_\beta\otimes a\circ x_\alpha\otimes y_\alpha\star y_\beta\\
&&\qquad-((a\star y_\alpha)\circ x_\beta-y_\alpha\star (a\circ x_\beta))\otimes x_\alpha\otimes y_\beta\\
&&\qquad-(a\circ x_\alpha)\circ x_\beta\otimes y_\beta\otimes
y_\alpha-x_\alpha\circ x_\beta\otimes y_\beta\otimes a\star y_\alpha-x_\beta\otimes x_\alpha\star
y_\beta
\otimes a\star y_\alpha\\
&&\qquad+y_\beta\otimes (a\circ x_\alpha)\circ x_\beta\otimes
y_\alpha+y_\beta\otimes x_\alpha\circ x_\beta\otimes a\star y_\alpha +x_\alpha\star y_\beta\otimes
x_\beta\otimes a\star y_\alpha\Big)\\
&=&\big(L_A(a)\otimes \id\otimes \id-\id\otimes L_A(a)\otimes \id\big)\big((\tau r)_{12}\circ r_{13}+r_{12}\circ r_{23}+r_{13}\star r_{23}\big)\nonumber\\
&&+((\id\otimes L_A(a)\otimes \id)(r+\tau r)_{12})\circ r_{23}-((L_A(a)\otimes \id\otimes \id)r_{13})\circ (r+\tau r)_{12}\nonumber\\
&&+(\id\otimes \id\otimes
L_{A,\star}(a))\big(r_{23}\circ r_{13}-r_{13}\circ
r_{23}-(\id\otimes\id\otimes \id-\tau\otimes \id)(r_{13}\circ
r_{12}+r_{12}\star r_{23})\big).
\end{eqnarray*}
Here we have used the following identities.
\vspb
\begin{eqnarray*}
&&(a\star y_\alpha)\circ x_\beta-(a\circ x_\beta)\star y_\alpha=(a\circ y_\alpha)\circ x_\beta-a\circ (y_\alpha\circ x_\beta),\\
&&(a\star y_\alpha)\star y_\beta-(a\star y_\beta)\star y_\alpha=a\star (y_\beta\circ
y_\alpha)-a\star (y_\alpha\circ y_\beta).
\vspb
\end{eqnarray*}
Hence Eq.~(\ref{Lc3}) holds if and only if Eq.~(\ref{cob6}) holds.  One similarly verifies Item \eqref{it:cobe}.
\end{proof}
\vspb
By Lemma~\ref{coblem1}, we arrive at the following conclusion.
\vspb
\begin{thm}\label{thmco1}
Let $(A,\circ)$ be a Novikov algebra and $r\in A\otimes A$. Define
$\Delta_r: A\rightarrow A\otimes A$ by Eq.~{\rm (\ref{co1})}. Then
 $(A,\circ, \Delta_r)$ is a Novikov bialgebra if and
only if Eqs.~(\ref{cob4})-(\ref{cob7}) hold.
\vspb
\end{thm}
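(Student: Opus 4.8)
The plan is to obtain the theorem as an immediate bookkeeping consequence of Lemma~\ref{coblem1}, which has already done all the computational work by translating each defining axiom of a Novikov bialgebra into a single tensor identity on $r$. First I would unwind Definition~\ref{Def-Novbi}: the triple $(A,\circ,\Delta_r)$ is a Novikov bialgebra exactly when three things hold simultaneously, namely that $(A,\circ)$ is a Novikov algebra, that $(A,\Delta_r)$ is a Novikov coalgebra (that is, Eqs.~\eqref{Lc3} and \eqref{Lc4} are satisfied), and that the compatibility conditions \eqref{Lb5}, \eqref{Lb6}, \eqref{Lb7} hold. The first requirement is granted by hypothesis, so precisely the five identities \eqref{Lc3}, \eqref{Lc4}, \eqref{Lb5}, \eqref{Lb6}, \eqref{Lb7} remain to be analyzed.

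Next I would apply the five items of Lemma~\ref{coblem1} one at a time. Items~\ref{it:coba}, \ref{it:cobb} and \ref{it:cobc} say that $\Delta_r$ satisfies \eqref{Lb5}, \eqref{Lb6}, \eqref{Lb7} if and only if \eqref{cob4}, \eqref{cob2}, \eqref{cob3} hold, respectively, while items~\ref{it:cobd} and \ref{it:cobe} say that $\Delta_r$ satisfies the coalgebra axioms \eqref{Lc3} and \eqref{Lc4} if and only if \eqref{cob6} and \eqref{cob7} hold. Since these five equations are displayed consecutively in the lemma, they are exactly the equations in the stated range \eqref{cob4}--\eqref{cob7}; conjoining the five equivalences therefore gives that $(A,\circ,\Delta_r)$ is a Novikov bialgebra if and only if every equation in \eqref{cob4}--\eqref{cob7} holds.

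I do not expect any genuine obstacle at the level of the theorem itself: the entire difficulty has been absorbed into Lemma~\ref{coblem1}, and the theorem is a pure assembly of its five parts. The one point worth being careful about is that the five bialgebra axioms are in bijection with the five cob-equations, so that the conjunction is faithfully reproduced with nothing lost or silently added. If I were to locate the hard step, it would sit inside the lemma rather than here, specifically item~\ref{it:cobd} (the coassociativity-type identity \eqref{Lc3}), whose expansion into degree-three tensors is the longest computation; but that verification may be assumed complete. The proof I would ultimately record is the single sentence: by Lemma~\ref{coblem1}, each defining identity of the Novikov bialgebra $(A,\circ,\Delta_r)$ is equivalent to the corresponding equation among \eqref{cob4}--\eqref{cob7}, whence the conclusion.
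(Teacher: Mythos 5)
Your proposal is correct and is exactly the paper's own argument: the paper offers no separate proof of Theorem~\ref{thmco1}, stating only that it follows from Lemma~\ref{coblem1}, since items \eqref{it:coba}--\eqref{it:cobe} of that lemma translate the five defining identities \eqref{Lb5}--\eqref{Lb7}, \eqref{Lc3}, \eqref{Lc4} of a Novikov bialgebra into the consecutively numbered equations \eqref{cob4}--\eqref{cob7}. Your observation that the equation range faithfully covers all five conditions, with the Novikov algebra axiom supplied by hypothesis, is precisely the bookkeeping the paper leaves implicit.
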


The we have the following special case.
\vspb
\begin{cor}\label{corco2}
Let $(A,\circ)$ be a Novikov algebra and $r\in A\otimes A$ be
skewsymmetric. Define $\Delta_r: A\rightarrow A\otimes A$ by
Eq.~{\rm (\ref{co1})}. Then
 $(A,\circ, \Delta_r)$ is a Novikov bialgebra if and only if the following
equalities hold.
{\small
\begin{eqnarray}
&&(L_A(a)\otimes \id\otimes \id-\id\otimes L_A(a)\otimes
\id)(\id\otimes \tau) (r\diamond r) +(\id\otimes
\id\otimes L_{A,\star}(a))(r\diamond r-(\tau\otimes \id)(r\diamond r))=0,\label{cob8}\\
&&\label{cob9}(\id\otimes \id\otimes \id- \id\otimes \tau)(\id\otimes
\id\otimes L_{A,\star}(a))(r\diamond r)=0\;\;\tforall  a\in A,
\end{eqnarray}}
where
\vspd
\begin{eqnarray} \notag
r\diamond r\coloneqq r_{13}\circ r_{23} +r_{12}\star r_{23}+r_{13}\circ
r_{12}.
\vspb
\end{eqnarray}
In particular, if $r\diamond r=0$, then  $(A,\circ, \Delta_r)$ is
a Novikov bialgebra.
\end{cor}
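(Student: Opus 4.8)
The plan is to deduce this corollary directly from Theorem~\ref{thmco1}, which already characterizes $(A,\circ,\Delta_r)$ as a Novikov bialgebra precisely when Eqs.~\eqref{cob4}, \eqref{cob2}, \eqref{cob3}, \eqref{cob6}, \eqref{cob7} all hold. The decisive simplification is that skewsymmetry of $r$ means $r+\tau r=0$. Since each of Eqs.~\eqref{cob4}, \eqref{cob2}, \eqref{cob3} (supplied by Lemma~\ref{coblem1}\eqref{it:coba}--\eqref{it:cobc} from the bialgebra compatibility conditions \eqref{Lb5}--\eqref{Lb7}) carries the factor $(r+\tau r)$, all three are satisfied automatically. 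Thus, in the skewsymmetric case, the Novikov bialgebra condition collapses to Eqs.~\eqref{cob6} and \eqref{cob7} alone, i.e.\ to the two Novikov-coalgebra conditions \eqref{Lc3} and \eqref{Lc4}. It then remains to rewrite these two equations under $\tau r=-r$ and identify them with Eqs.~\eqref{cob8} and \eqref{cob9}.

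The reduction of Eq.~\eqref{cob7} to Eq.~\eqref{cob9} is essentially a direct substitution: replacing $(\tau r)_{23}$ by $-r_{23}$ (one use of skewsymmetry in the second copy of $r$) turns the bracket $r_{13}\circ(\tau r)_{23}-r_{12}\star r_{23}-r_{13}\circ r_{12}$ into $-(r_{13}\circ r_{23}+r_{12}\star r_{23}+r_{13}\circ r_{12})=-\,r\diamond r$, so Eq.~\eqref{cob7} is just $-1$ times Eq.~\eqref{cob9}. For Eq.~\eqref{cob6}, I would first use $r+\tau r=0$ to delete its two middle terms $((\id\otimes L_A(a)\otimes\id)(r+\tau r)_{12})\circ r_{23}$ and $((L_A(a)\otimes\id\otimes\id)r_{13})\circ(r+\tau r)_{12}$, both of which contain a vanishing factor in slots $1,2$, and to replace $(\tau r)_{12}$ by $-r_{12}$ in the first term. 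What survives splits into two groups according to where the Novikov operators act: a group with $L_A(a)\otimes\id\otimes\id-\id\otimes L_A(a)\otimes\id$ acting on the first two slots, and a group with $\id\otimes\id\otimes L_{A,\star}(a)$ acting on the third slot.

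The hard part will be the last identification: showing that this surviving expression is exactly Eq.~\eqref{cob8}. This is a purely combinatorial rearrangement, matching permuted copies of $r\diamond r$ by relabeling the summation indices $\alpha\leftrightarrow\beta$ and applying the skewsymmetry relation $\sum_\alpha x_\alpha\otimes y_\alpha=-\sum_\alpha y_\alpha\otimes x_\alpha$ slotwise; one also collapses the operator $\id\otimes\id\otimes\id-\tau\otimes\id$ in the third-slot group using skewsymmetry. The first group then assembles into $(L_A(a)\otimes\id\otimes\id-\id\otimes L_A(a)\otimes\id)(\id\otimes\tau)(r\diamond r)$ and the third-slot group into $(\id\otimes\id\otimes L_{A,\star}(a))(r\diamond r-(\tau\otimes\id)(r\diamond r))$, which is precisely Eq.~\eqref{cob8}. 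Finally, since both Eqs.~\eqref{cob8} and \eqref{cob9} are linear in $r\diamond r$ read through the permutation operators, the closing assertion that $r\diamond r=0$ forces $(A,\circ,\Delta_r)$ to be a Novikov bialgebra is immediate.
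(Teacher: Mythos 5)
Your proposal is correct and follows essentially the same route as the paper's own proof: invoke Theorem~\ref{thmco1}, observe that Eqs.~\eqref{cob4}--\eqref{cob3} hold automatically since they act on $r+\tau r=0$, and then check that under skewsymmetry Eq.~\eqref{cob6} is equivalent to Eq.~\eqref{cob8} and Eq.~\eqref{cob7} to Eq.~\eqref{cob9}. The paper states these two equivalences without detail, whereas you outline the verification (the substitution $\tau r=-r$, the slotwise relabeling, and the identity $r_{23}\circ r_{13}=(\tau\otimes\id)(r_{13}\circ r_{23})$), which indeed works out, with the reduced form of Eq.~\eqref{cob6} equal to $-1$ times the left side of Eq.~\eqref{cob8}.
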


\begin{proof}
By the skewsymmetry of
$r$, we obtain that Eq.~(\ref{cob6}) holds if and only if
Eq.~(\ref{cob8}) holds, and Eq.~(\ref{cob7}) holds if and only
if Eq.~(\ref{cob9}) holds.
Then the conclusion follows.
\vspb
\end{proof}

Corollary~\ref{corco2} motivates the following notion similar to
the classical Yang-Baxter equation (CYBE) for Lie algebras
~\cite{CP}.
\begin{defi}
Let $(A,\circ)$ be a Novikov algebra and $r\in A\otimes A$. The
equation
\vspb
$$r\diamond r\coloneqq r_{13}\circ r_{23} +r_{12}\star r_{23}+r_{13}\circ
r_{12}=0
\vspb
$$
is called the {\bf Novikov Yang-Baxter equation (NYBE)} in $A$.
\end{defi}

To extend the close relationship between skewsymmetric
solutions of the CYBE and quasi-Frobenius Lie algebras (see
\cite{BFS}) to the context of Novikov algebras, we also give the
following notion.
\begin{defi}
Let $(A, \circ)$ be a Novikov algebra. If there is a skewsymmetric nondegenerate bilinear form $\omega(\cdot,\cdot)$ on $A$ satisfying
\begin{eqnarray}
&&\omega(a\circ b,c)-\omega(a\star c, b)+\omega(c\circ
b,a)=0\;\;\text{for all $a$, $b$, $c\in A$,}
\end{eqnarray}
then $(A, \circ, \omega(\cdot,\cdot))$ is called a {\bf quasi-Frobenius Novikov algebra}.
\end{defi}

Thus we obtain the following relation.
\begin{pro}\label{quasi-Nov-equi}
Let $(A, \circ)$ be a Novikov algebra with a nondegenerate
bilinear form $\omega(\cdot,\cdot)$. Let $\{e_\alpha |\alpha\in
I\}$ be a basis of $A$ and $\{f_\alpha |\alpha\in I\}$ be its dual
basis associated with the bilinear form
$\omega(\cdot,\cdot)$, and $r=\sum_{\alpha\in
I}e_\alpha\otimes f_\alpha\in A\otimes A$. Then $r$ is a
skewsymmetric solution of the NYBE in $A$ if and only if $(A,
\circ, \omega(\cdot,\cdot))$ is a quasi-Frobenius Novikov algebra.
\end{pro}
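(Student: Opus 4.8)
The plan is to translate the tensor identity $r\diamond r=0$ into the scalar identity defining a quasi-Frobenius Novikov algebra, by contracting the three legs of $r\diamond r\in A\ot A\ot A$ against $\omega$ in a suitable pattern and then appealing to nondegeneracy. Normalize the dual basis by $\omega(e_\alpha,f_\beta)=\delta_{\alpha\beta}$; this yields the two reconstitution formulas $v=\sum_\alpha\omega(v,f_\alpha)e_\alpha$ and $v=\sum_\alpha\omega(e_\alpha,v)f_\alpha$, valid for all $v\in A$, which are essentially the only facts about $r$ and $\omega$ the argument will use.

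First I would record the equivalence of the two skewsymmetry conditions. Writing $r=\sum_{\alpha,\beta}m_{\alpha\beta}\,e_\alpha\ot e_\beta$ with $(m_{\alpha\beta})$ the inverse transpose of the matrix $\big(\omega(e_\alpha,e_\beta)\big)$, one sees at once that $\tau r=-r$ holds if and only if $\big(\omega(e_\alpha,e_\beta)\big)$ is skewsymmetric, i.e. if and only if $\omega$ is skewsymmetric. Since a skewsymmetric solution of the NYBE requires $r$ to be skewsymmetric, while a quasi-Frobenius structure requires $\omega$ to be skewsymmetric, both sides of the asserted equivalence carry the same hypothesis, and I may assume $\omega$ skewsymmetric (equivalently $r$ skewsymmetric) throughout the rest of the argument.

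The core step is the contraction. From the notation fixed before the statement,
\[
r\diamond r=\sum_{\alpha,\beta}\Big(e_\alpha\ot e_\beta\ot (f_\alpha\circ f_\beta)+e_\alpha\ot (f_\alpha\star e_\beta)\ot f_\beta+(e_\alpha\circ e_\beta)\ot f_\beta\ot f_\alpha\Big).
\]
Fix $a,b,c\in A$ and apply to $r\diamond r$ the functional evaluating the first leg through $\omega(\cdot,c)$, the second through $\omega(b,\cdot)$, and the third through $\omega(a,\cdot)$. Collapsing each leg by the appropriate reconstitution formula, and using skewsymmetry of $\omega$ to interchange its arguments (producing the sign in $\omega(b,e_\beta)=-\omega(e_\beta,b)$) together with the symmetry $c\star a=a\star c$ of $\star$, the three summands reduce respectively to $\omega(c\circ b,a)$, $-\omega(a\star c,b)$ and $\omega(a\circ b,c)$. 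Hence this contraction of $r\diamond r$ is precisely $\omega(a\circ b,c)-\omega(a\star c,b)+\omega(c\circ b,a)$, the left-hand side of the quasi-Frobenius relation; note that only the bilinearity of $\circ$ and $\star$ is used here, not the Novikov axioms.

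Finally I would conclude by nondegeneracy. Because $\omega$ is nondegenerate, each of $c\mapsto\omega(\cdot,c)$, $b\mapsto\omega(b,\cdot)$ and $a\mapsto\omega(a,\cdot)$ is an isomorphism $A\to A^\ast$, so as $a,b,c$ range over $A$ the chosen functionals span $A^\ast\ot A^\ast\ot A^\ast$ and thus separate the points of $A\ot A\ot A$. Therefore $r\diamond r=0$ if and only if the displayed contraction vanishes for all $a,b,c$, that is, if and only if $\omega$ satisfies the quasi-Frobenius identity; together with the first step this proves the equivalence. I expect the only delicate point to be the slot-and-sign bookkeeping in the contraction—applying $\sum_\alpha\omega(v,f_\alpha)e_\alpha$ to legs carrying an $e$ and $\sum_\alpha\omega(e_\alpha,v)f_\alpha$ to legs carrying an $f$, and tracking the signs from skewsymmetry—after which the result drops out with no further structural input.
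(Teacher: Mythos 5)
Your proof is correct, and it is essentially the argument the paper itself invokes: the paper's proof of this proposition is a one-line appeal to the analogous Lie-algebra result \cite[Theorem 3.1]{BFS} (identifying $r$ with the inverse of $\omega$ and matching the Yang--Baxter tensor identity with the $2$-cocycle identity via contraction and nondegeneracy), which is exactly the computation you carry out. Your version simply makes that argument self-contained in the Novikov setting, with the correct bookkeeping for the $\star$-term and the signs coming from skewsymmetry.
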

\begin{proof}
The proof follows from the same argument as the one for Lie algebras, as presented in~\cite[Theorem 3.1]{BFS}.
\end{proof}

For a finite-dimensional vector space $A$, the isomorphism
\vspb
$$A\ot A\cong
\Hom_{\bf k}(A^*,{\bf k})\ot A\cong \Hom_{\bf k}(A^*,A)
\vspa
$$
identifies an $r\in A\otimes
A$ with a map from $A^*$ to $A$ which we denote by $T^r$.
Explicitly, writing $r=\sum_{\alpha}x_\alpha\otimes y_\alpha$, then
\vspe
\begin{equation}\label{eq:4.12} \notag
T^r:A^*\to A, \quad T^r(f)=\sum_{\alpha}\langle f, x_\alpha\rangle y_\alpha
~~\tforall  f\in A^*.
\vspd
\end{equation}
A routine check gives the following property.
\begin{thm}\label{operator1}
Let $(A, \circ)$ be a Novikov algebra and $r\in A\otimes A$ be
skewsymmetric. Then $r$ is a solution of the \nybe in $(A,\circ)$
if and only if $T^r$ satisfies
\vspa
\begin{eqnarray}\label{operr5} \notag
T^r(f)\circ
T^r(g)=T^r(L_{A,\star}^\ast(T^r(f))g)-T^r(R_A^\ast(T^r(g))f)~~~\tforall ~~f,~~g\in
A^\ast.
\vspd
\end{eqnarray}
\end{thm}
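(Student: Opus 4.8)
The plan is to translate the tensor identity $r\diamond r=0$ into the operator identity~\eqref{operr5} by contracting the three-tensor $r\diamond r\in A\otimes A\otimes A$ against covectors in its first two slots. For $f,g\in A^\ast$, let $c_{f,g}\colon A\otimes A\otimes A\to A$ be the contraction sending $\sum_i u_i\otimes v_i\otimes w_i$ to $\sum_i\langle f,u_i\rangle\langle g,v_i\rangle w_i$. Since $A$ is finite-dimensional, $A^\ast\otimes A^\ast$ separates $A\otimes A$, so $r\diamond r=0$ if and only if $c_{f,g}(r\diamond r)=0$ for all $f,g\in A^\ast$. This reduction converts the single equation in $A^{\otimes 3}$ into the family of equations in $A$ that \eqref{operr5} packages, and it is what gives both directions of the equivalence simultaneously.

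Writing $r=\sum_\alpha x_\alpha\otimes y_\alpha$, I would first record the identities used repeatedly. From skewsymmetry, $T^r(f)=\sum_\alpha\langle f,x_\alpha\rangle y_\alpha=-\sum_\alpha\langle f,y_\alpha\rangle x_\alpha$; from the definition of the dual representation, $\langle L_{A,\star}^\ast(a)g,v\rangle=-\langle g,a\star v\rangle$ and $\langle R_A^\ast(a)f,v\rangle=-\langle f,v\circ a\rangle$. Then I would contract the three summands of $r\diamond r=r_{13}\circ r_{23}+r_{12}\star r_{23}+r_{13}\circ r_{12}$ one at a time, using the explicit tensor expressions listed just before Lemma~\ref{coblem1}. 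The term $r_{13}\circ r_{23}$ contracts directly to $T^r(f)\circ T^r(g)$. For $r_{12}\star r_{23}$, moving $L_{A,\star}$ onto $g$ through its dual and then summing over the first index yields $-T^r\big(L_{A,\star}^\ast(T^r(f))g\big)$. For $r_{13}\circ r_{12}$, moving $R_A$ onto $f$ through its dual gives $-\sum_\beta\langle g,y_\beta\rangle\,T^r\big(R_A^\ast(x_\beta)f\big)$, and here the skewsymmetry identity $\sum_\beta\langle g,y_\beta\rangle x_\beta=-T^r(g)$ rewrites this as $+T^r\big(R_A^\ast(T^r(g))f\big)$.

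Summing the three contractions gives $c_{f,g}(r\diamond r)=T^r(f)\circ T^r(g)-T^r\big(L_{A,\star}^\ast(T^r(f))g\big)+T^r\big(R_A^\ast(T^r(g))f\big)$, whose vanishing for all $f,g$ is precisely~\eqref{operr5} after rearrangement. The computation is routine; the only delicate point is the bookkeeping of signs. Each passage to a dual operator contributes a minus sign, and in the $r_{13}\circ r_{12}$ term one must additionally invoke skewsymmetry to turn a contraction against the $y_\beta$ into the map $T^r$ applied after $R_A^\ast(T^r(g))$. Arranging for these two minus signs to combine so that the final term enters~\eqref{operr5} with exactly the displayed sign is the main obstacle; once the signs are verified, the equivalence follows immediately from the finite-dimensional faithfulness of $c_{f,g}$ noted above.
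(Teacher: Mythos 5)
Your proposal is correct: the contraction reduction (valid since $A$ is finite-dimensional, so $A^\ast\otimes A^\ast$ separates points of $A\otimes A$ in the first two tensor slots), the identification of $r_{13}\circ r_{23}$, $r_{12}\star r_{23}$, $r_{13}\circ r_{12}$ with the three terms $T^r(f)\circ T^r(g)$, $-T^r\big(L_{A,\star}^\ast(T^r(f))g\big)$, $+T^r\big(R_A^\ast(T^r(g))f\big)$, and the sign bookkeeping via the dual-representation convention and skewsymmetry all check out, and they yield both directions of the equivalence at once. The paper gives no argument beyond the phrase ``a routine check,'' and your computation is exactly that routine verification made explicit, so the two approaches coincide.
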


Theorem \ref{operator1} shows that $T^r$ plays a role similar to
that of the classical $\calo$-operator of a Lie algebra, as the operator form of the CYBE~\cite{Ku1}.  This
motivates us to give the following notion, as the operator form of
the \nybe.
\begin{defi}
Let $(A,\circ)$ be a Novikov algebra and $(V, l_A, r_A)$ be a
representation. A linear map $T: V\rightarrow A$ is called an {\bf $\mathcal{O}$-operator} of $(A,\circ)$ associated to $(V, l_A,r_A)$ if $T$ satisfies
\vspb
\begin{eqnarray*}
T(u)\circ T(v)=T(l_A(T(u))v)+T(r_A(T(v))u)~~~~~\tforall ~~u,~~v\in
V.
\end{eqnarray*}
\end{defi}

Hence for a Novikov algebra $(A,\circ)$ and a skewsymmetric element $r\in A\otimes A$, Theorem~\ref{operator1} implies that $r$ is a
solution of the \nybe in $(A,\circ)$ if and
only if $T^r$ is an $\mathcal O$-operator of $(A,\circ)$
associated to the representation $(A^*,L_{A,\star}^*,-R_A^*)$.

\begin{thm}\label{Goper}
Let $(A,\circ)$ be a Novikov algebra and $(V, l_A, r_A)$ be a
representation. Let $T: V\rightarrow A$ be a linear map which is
identified with $r_T\in A\otimes V^\ast \subseteq
(A\ltimes_{l_A^\ast+r_A^\ast,-r_A^\ast} V^\ast) \otimes
(A\ltimes_{l_A^\ast+r_A^\ast,-r_A^\ast} V^\ast)$ through
 ${\rm Hom}_{\bf k}(V, A)\cong A\otimes V^\ast$.
Then $r=r_T-\tau r_T$ is a solution of the \nybe in the Novikov algebra
$(A\ltimes_{l_A^\ast+r_A^\ast,-r_A^\ast} V^\ast, \bullet)$
in Proposition~\ref{pro:semi}
if and only if $T$ is an
$\mathcal{O}$-operator of $(A,\circ)$ associated to $(V, l_A,
r_A)$.
\end{thm}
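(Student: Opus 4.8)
The plan is to deduce Theorem~\ref{Goper} from Theorem~\ref{operator1}, applied not to $A$ but to the ambient Novikov algebra $\mathfrak{g}:=A\ltimes_{l_A^*+r_A^*,-r_A^*}V^*$. First I would record that $\mathfrak{g}$ really is a (finite-dimensional) Novikov algebra: by Proposition~\ref{pp:dualrep} the triple $(V^*,l_A^*+r_A^*,-r_A^*)$ is a representation of $(A,\circ)$, so Proposition~\ref{pro:semi} makes $(\mathfrak{g},\bullet)$ a Novikov algebra. Since $r=r_T-\tau r_T$ is skewsymmetric, Theorem~\ref{operator1} together with the definition of an $\calo$-operator, both read inside $\mathfrak{g}$, gives that $r$ is a solution of the \nybe in $\mathfrak{g}$ if and only if $T^r:\mathfrak{g}^*\to\mathfrak{g}$ is an $\calo$-operator of $\mathfrak{g}$ associated to $(\mathfrak{g}^*,L_{\mathfrak{g},\star}^*,-R_{\mathfrak{g}}^*)$. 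Thus everything reduces to translating this last condition into the statement that $T$ is an $\calo$-operator of $A$ associated to $(V,l_A,r_A)$.

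Next I would make $T^r$ explicit under the identification $\mathfrak{g}^*=A^*\oplus V$. Choosing a basis $\{v_\alpha\}$ of $V$ with dual basis $\{v_\alpha^*\}$, one has $r_T=\sum_\alpha T(v_\alpha)\otimes v_\alpha^*$, whence $r=\sum_\alpha\big(T(v_\alpha)\otimes v_\alpha^*-v_\alpha^*\otimes T(v_\alpha)\big)$. Pairing in the first tensor slot and using that $A^*$ annihilates $V^*$ and $V$ annihilates $A$ inside the duality $\langle\mathfrak{g}^*,\mathfrak{g}\rangle$, I get $T^r(f)=\sum_\alpha\langle f,T(v_\alpha)\rangle v_\alpha^*\in V^*$ for $f\in A^*$ and $T^r(v)=-T(v)\in A$ for $v\in V$. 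So $T^r$ simply packages $T$ (on the $V$-summand) together with its transpose (on the $A^*$-summand).

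Then I would evaluate the $\calo$-operator identity
\[
T^r(\xi)\bullet T^r(\eta)=T^r\big(L_{\mathfrak{g},\star}^*(T^r(\xi))\eta\big)-T^r\big(R_{\mathfrak{g}}^*(T^r(\eta))\xi\big)
\]
on the four types of pairs $\xi,\eta\in\{A^*,V\}$, using the multiplication rules $A\bullet A\subseteq A$, $A\bullet V^*,\,V^*\bullet A\subseteq V^*$, $V^*\bullet V^*=0$ of the semidirect product together with the defining relations of $l_A^*,r_A^*$. The decisive case is $\xi=v,\eta=w\in V$: here $T^r(v)=-T(v)$ and $T^r(w)=-T(w)$, the left side collapses to $T(v)\circ T(w)$, and a short computation shows $L_{\mathfrak{g},\star}^*(T^r(v))w=-l_A(T(v))w\in V$ and $R_{\mathfrak{g}}^*(T^r(w))v=r_A(T(w))v\in V$, so the right side becomes $T(l_A(T(v))w)+T(r_A(T(w))v)$. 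Hence this case reads exactly $T(v)\circ T(w)=T\big(l_A(T(v))w+r_A(T(w))v\big)$, the $\calo$-operator condition for $T$. The case $\xi,\eta\in A^*$ is vacuous, since both sides land in $V^*\bullet V^*=0$, and the two mixed cases, which take values in $V^*$, reduce after pairing against $V$ to the same identity by the nondegeneracy of $\langle\cdot,\cdot\rangle$.

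The main obstacle is the bookkeeping in the two mixed cases $\xi\in A^*,\eta\in V$ and $\xi\in V,\eta\in A^*$: one must track carefully the signs introduced by the dual representation $(l_A^*+r_A^*,-r_A^*)$ and by the canonical pairing, and confirm that these components impose nothing beyond the single $\calo$-operator equation extracted from the $V\times V$ case. If one prefers to bypass Theorem~\ref{operator1}, the alternative is to expand $r\diamond r=r_{13}\bullet r_{23}+r_{12}\star r_{23}+r_{13}\bullet r_{12}$ directly in $\mathfrak{g}^{\otimes 3}$, decompose it along the eight $A/V^*$ components, discard those containing a factor $V^*\bullet V^*$, and read off the surviving component as the $\calo$-operator identity; this is more laborious but entirely elementary.
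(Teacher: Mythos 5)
Your proposal is correct, but it takes a genuinely different route from the paper. The paper's own proof of Theorem~\ref{Goper} is a one-line citation: it asserts that the statement follows by the same argument as in \cite[Section~2]{Bai} for Lie algebras, i.e., by directly expanding $r\diamond r=r_{13}\bullet r_{23}+r_{12}\star r_{23}+r_{13}\bullet r_{12}$ for $r=r_T-\tau r_T$ inside the semidirect product and reading off that it vanishes exactly when $T$ is an $\calo$-operator --- essentially the ``alternative'' you sketch in your closing remark. What you do instead is reuse Theorem~\ref{operator1} inside the ambient algebra $\mathfrak{g}=A\ltimes_{l_A^*+r_A^*,-r_A^*}V^*$: since $r$ is skewsymmetric, the \nybe for $r$ in $\mathfrak{g}$ is equivalent to the operator identity for $T^r:\mathfrak{g}^*\to\mathfrak{g}$, and a case analysis over $\mathfrak{g}^*=A^*\oplus V$ translates that identity into the $\calo$-operator condition for $T$. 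This reduction is sound: your formulas $T^r|_V=-T$, $T^r|_{A^*}=$ transpose of $T$ into $V^*$, together with $L_{\mathfrak{g},\star}^*(a)w=l_A(a)w$ and $R_{\mathfrak{g}}^*(a)w=-r_A(a)w$ for $a\in A$, $w\in V$, all check out, and the $V\times V$ case does yield exactly $T(v)\circ T(w)=T(l_A(T(v))w)+T(r_A(T(w))v)$. Two small imprecisions, neither fatal: in the $A^*\times A^*$ case the right-hand side is not literally an element of $V^*\bullet V^*$; it vanishes because $L_{\mathfrak{g},\star}^*(u^*)g=R_{\mathfrak{g}}^*(u^*)g=0$ for $u^*\in V^*$, $g\in A^*$, which is a separate (two-line) computation. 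Also, the mixed case $\xi\in V$, $\eta\in A^*$ actually produces the \emph{symmetrized} identity $T(v)\star T(u)=T(l_A(T(v))u)+T(r_A(T(v))u)+T(l_A(T(u))v)+T(r_A(T(u))v)$ rather than the $\calo$-identity itself (only the case $\xi\in A^*$, $\eta\in V$ reproduces it exactly); this is harmless, since the $V\times V$ case alone forces the full identity in one direction, and the $\calo$-identity implies the symmetrized one in the other. What your route buys is economy and reuse of an existing result --- no expansion of the three-tensor sum in $\mathfrak{g}^{\otimes 3}$ --- at the price of careful sign bookkeeping for the dual actions; the paper's (cited) route is self-contained brute force and does not depend on Theorem~\ref{operator1}.
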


\begin{proof}
The proof follows the same argument as the one in \cite[Section 2]{Bai} for Lie algebras.
\end{proof}

\begin{defi}
A {\bf \ndend algebra} is a triple $(A,\lhd,\rhd)$, where $A$ is a vector space, and $\lhd$ and $\rhd$ are binary operations such that
\vspb
\begin{eqnarray}
&&\label{ND1}x\rhd (y\rhd z)=(x\rhd y+x\lhd y)\rhd z+y\rhd (x\rhd z)-(y\rhd x+y\lhd x)\rhd z,\\
&&\label{ND2} x\rhd (y\lhd z)=(x\rhd y)\lhd z+y\lhd (x\lhd z+x\rhd z)-(y\lhd x)\lhd z,\\
&&\label{ND3} (x\lhd y+x\rhd y)\rhd z=(x\rhd z)\lhd y,\\
&&\label{ND4} (x\lhd y)\lhd z=(x\lhd z)\lhd y \tforall x, y,z\in A.
\vspd
\end{eqnarray}
\end{defi}

\begin{rmk}
The operad of pre-Novikov algebras is the
successor of the operad of Novikov algebra in the sense
of~\cite{BBGN}.
On the other hand, if $\lhd$ and $\rhd$ only satisfy
Eqs.~(\ref{ND1}) and (\ref{ND2}), then $(A, \lhd, \rhd)$ is
called an {\bf $L$-dendriform algebra} in \cite{Bai-Liu-Ni}. \end{rmk}
\vspb
\begin{ex}
Recall~\cite{Lod} that a {\bf Zinbiel algebra} $(A,\cdot)$ is a
vector space $A$ with a binary operation $\cdot: A\otimes
A\rightarrow A$ satisfying
\vspa
$$
a\cdot (b\cdot c)=(b\cdot a)\cdot c+(a\cdot b)\cdot c\;\;\tforall
a,b,c\in A.
\vspb
$$
For a derivation $D$ on a Zinbiel algebra $(A,\cdot)$, define binary operations  $\lhd$ and $\rhd: A\otimes
A\rightarrow A$ by
\vspb
$$
a\lhd b\coloneqq D(b)\cdot a,\;\;a\rhd b\coloneqq a\cdot D(b)\;\;\tforall  a,b\in A.
\vspb
$$
A direct check shows that $(A,\lhd,\rhd)$ is a \ndend algebra.
\vspb
\end{ex}

For a \ndend algebra $(A, \lhd, \rhd)$, define linear maps $L_{\rhd},
R_{\lhd}:A\rightarrow {\rm End}_{\bf k}(A)$ by
\vspa
\begin{eqnarray*}
L_{\rhd}(a)(b)\coloneqq a\rhd b, \quad
R_{\lhd}(a)(b)\coloneqq b\lhd a~~\;\tforall  a, b\in A.
\vspb
\end{eqnarray*}

\begin{pro}\label{pro:ndend}
Let $(A, \lhd, \rhd)$ be a \ndend algebra. The binary operation
\vspb
\begin{eqnarray}
\label{ND5}
\circ: A\otimes A\rightarrow A, \quad
x \circ y\coloneqq x\lhd y+x\rhd y~~~~\;\;\tforall  x, y\in A,
\vspb
\end{eqnarray}
defines a Novikov algebra, which is called the {\bf associated Novikov algebra} of $(A, \lhd, \rhd)$.
Moreover,  $(A, L_{\rhd}, R_{\lhd})$ is
a representation of $(A, \circ)$. Conversely, let $A$ be a vector space
with binary operations $\rhd$ and $\lhd$. If $(A, \circ)$ defined by Eq.~\eqref{ND5} is a Novikov
algebra and $(A, L_{\rhd}, R_{\lhd})$ is a representation of $(A,
\circ)$, then $(A, \lhd, \rhd)$ is a \ndend algebra.
\end{pro}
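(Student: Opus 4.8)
The plan is to set $l:=L_{\rhd}$ and $r:=R_{\lhd}$, so that $l(x)y=x\rhd y$ and $r(x)y=y\lhd x$, and to note that $\circ=\lhd+\rhd$ gives the single structural relation $x\circ y=l(x)y+r(y)x$. The whole argument rests on a \emph{termwise dictionary}: substituting $\circ=\lhd+\rhd$, $l_A=l$, $r_A=r$ and $v=z$ into the four defining identities of a representation of a Novikov algebra turns them, one by one, into the four \ndend axioms. Explicitly, the commutator identity $l(a\circ b-b\circ a)z=l(a)l(b)z-l(b)l(a)z$ becomes Eq.~\eqref{ND1}; the mixed identity $l(a)r(b)z-r(b)l(a)z=r(a\circ b)z-r(b)r(a)z$ becomes Eq.~\eqref{ND2}; the identity $l(a\circ b)z=r(b)l(a)z$ becomes Eq.~\eqref{ND3}; and $r(a)r(b)z=r(b)r(a)z$ becomes Eq.~\eqref{ND4}. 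Each is a direct substitution, so the equivalence of these two families of identities is purely formal and needs no prior knowledge that $\circ$ is Novikov.

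Next I would derive the two Novikov axioms for $\circ$ from Eqs.~\eqref{ND1}--\eqref{ND4}. For right-commutativity, I expand $(a\circ b)\circ c=(a\lhd b)\lhd c+(a\rhd b)\lhd c+(a\circ b)\rhd c$ and rewrite the last term by Eq.~\eqref{ND3} as $(a\rhd c)\lhd b$; doing the same for $(a\circ c)\circ b$, the difference collapses to $(a\lhd b)\lhd c-(a\lhd c)\lhd b$, which vanishes by Eq.~\eqref{ND4}. For left-symmetry I compute the associator $(a\circ b)\circ c-a\circ(b\circ c)$, expanding $a\circ(b\circ c)$ via Eqs.~\eqref{ND1} and \eqref{ND2} to resolve $a\rhd(b\rhd c)$ and $a\rhd(b\lhd c)$, together with Eq.~\eqref{ND3}. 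After cancellation the associator and its $a\leftrightarrow b$ transpose differ only by $\big(a\rhd(b\rhd c)-b\rhd(a\rhd c)\big)+\big((b\rhd c)\lhd a-(a\rhd c)\lhd b\big)$, and this is zero once Eqs.~\eqref{ND1} and \eqref{ND3} are combined to give $a\rhd(b\rhd c)-b\rhd(a\rhd c)=(a\rhd c)\lhd b-(b\rhd c)\lhd a$. Hence $(A,\circ)$ is Novikov, and together with the dictionary this yields the forward direction: a \ndend algebra produces the Novikov algebra $(A,\circ)$ and the representation $(A,L_{\rhd},R_{\lhd})$.

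The converse is then immediate from the same dictionary: if $(A,\circ)$ is a Novikov algebra and $(A,L_{\rhd},R_{\lhd})$ is a representation, the four representation identities hold, and these are termwise Eqs.~\eqref{ND1}--\eqref{ND4}, so $(A,\lhd,\rhd)$ is a \ndend algebra. I expect the main obstacle to be the left-symmetry computation: unlike right-commutativity it uses all of Eqs.~\eqref{ND1}, \eqref{ND2}, \eqref{ND3} and closes only through a final cancellation that requires Eqs.~\eqref{ND1} and \eqref{ND3} in tandem. As a sanity check and possible shortcut one may instead invoke the general splitting (successor) formalism of~\cite{BBGN}, under which the sum of the operations of a successor algebra is automatically an algebra over the original operad with the component operations forming a bimodule; this would bypass the explicit expansions, though the direct route above is self-contained.
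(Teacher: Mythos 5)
Your proof is correct, and it takes a genuinely more self-contained route than the paper's. The paper's entire proof consists of citing~\cite{BBGN} for the first statement (the splitting/successor formalism guarantees that the sum of the two operations of a successor algebra is an algebra over the original operad) and declaring the remaining statements ``easily verified''. You replace the citation by a direct verification of the two Novikov axioms: right-commutativity from Eqs.~\eqref{ND3} and~\eqref{ND4}, and left-symmetry from Eqs.~\eqref{ND1}, \eqref{ND2} and~\eqref{ND3}, closing with the identity $a\rhd(b\rhd c)-b\rhd(a\rhd c)=(a\rhd c)\lhd b-(b\rhd c)\lhd a$; I checked both expansions and they are sound. Your ``termwise dictionary'' --- that with $l_A=L_\rhd$ and $r_A=R_\lhd$ the four representation identities are verbatim Eqs.~\eqref{ND1}--\eqref{ND4}, as a purely formal matter that does not presuppose $\circ$ is Novikov --- is exactly the content behind the paper's ``easily verified'', and making that formality explicit is what legitimately reduces the converse to one line. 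As for what each approach buys: the paper's citation is shorter and places the proposition inside the operadic framework already invoked in the remark following the definition of pre-Novikov algebras, while yours is elementary and checkable line by line; your closing observation that one could instead invoke~\cite{BBGN} is, in fact, precisely the paper's proof.
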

\vspb
\begin{proof}
The first statement follows from~\cite{BBGN} and the other statements are easily verified.
\end{proof}
\vspb
The following conclusion establishes the relationship between
\ndend algebras and the $\mathcal O$-operators of the associated
Novikov algebras.
\vspa
\begin{pro}\label{cor:iden}
\begin{enumerate}
\item \label{it:1}Let $(A, \circ)$ be a Novikov algebra, $(V, l_A,
r_A)$ be a representation of $(A, \circ)$ and $T: V\rightarrow A$ be an
$\mathcal{O}$-operator associated to $(V, l_A, r_A)$. Then there
exists a \ndend algebra structure on $V$ defined by
\vspa
\begin{eqnarray}\label{eq:ndend} \notag
u\rhd v=l_A(T(u))v,\;\; u\lhd v= r_A(T(v))u \tforall u,~~v\in V.
\end{eqnarray}
\item\label{it:2}
Let $(A, \lhd, \rhd)$ be a \ndend algebra and $(A, \circ)$ be the
associated Novikov algebra. Then the identity map is an $\mathcal
O$-operator of $(A, \circ)$ associated to the representation $(A,
L_{\rhd}, R_{\lhd})$.
\end{enumerate}
\end{pro}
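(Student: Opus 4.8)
The plan is to handle the two parts separately: part~\eqref{it:1} contains essentially all of the work, while part~\eqref{it:2} is immediate from the definitions together with Proposition~\ref{pro:ndend}.

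For part~\eqref{it:1}, the key observation I would record first is that, under the definitions $u\rhd v=l_A(T(u))v$ and $u\lhd v=r_A(T(v))u$, the $\mathcal{O}$-operator identity becomes
$$T(u)\circ T(v)=T(l_A(T(u))v)+T(r_A(T(v))u)=T(u\rhd v)+T(u\lhd v)=T(u\rhd v+u\lhd v).$$
This says that $T$ sends the operation $\rhd+\lhd$ on $V$ to $\circ$ on $A$, so that every $\circ$-product of two $T$-images can be replaced by $T$ applied to $(\rhd+\lhd)$, and conversely. I would then verify the four axioms \eqref{ND1}--\eqref{ND4} in turn, the point being that each matches exactly one of the four defining identities of the representation $(V,l_A,r_A)$. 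For \eqref{ND1}, expanding both sides as operators applied to $z$ gives $l_A(T(x))l_A(T(y))z$ on the left and, after the replacement above, $l_A(T(x)\circ T(y)-T(y)\circ T(x))z+l_A(T(y))l_A(T(x))z$ on the right; the identity $l_A(a\circ b-b\circ a)=l_A(a)l_A(b)-l_A(b)l_A(a)$ closes the gap. In the same way \eqref{ND2} reduces to the axiom $l_A(a)r_A(b)-r_A(b)l_A(a)=r_A(a\circ b)-r_A(b)r_A(a)$ applied to $y$, \eqref{ND3} reduces to $l_A(a\circ b)=r_A(b)l_A(a)$, and \eqref{ND4} reduces to $r_A(a)r_A(b)=r_A(b)r_A(a)$; in each case one applies the relevant operators to the correct argument and uses the rewriting $T(u)\circ T(v)=T(u\rhd v+u\lhd v)$ wherever a $\circ$-product of $T$-images appears.

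For part~\eqref{it:2}, Proposition~\ref{pro:ndend} already supplies that $(A,L_{\rhd},R_{\lhd})$ is a representation of the associated Novikov algebra $(A,\circ)$, so it only remains to verify the $\mathcal{O}$-operator identity for $T=\id$. This identity reads $u\circ v=L_{\rhd}(u)v+R_{\lhd}(v)u=u\rhd v+u\lhd v$, which holds by the definition of $\circ$ in Eq.~\eqref{ND5}; hence $\id$ is an $\mathcal{O}$-operator, as claimed. I do not expect a genuine obstacle anywhere: the only real content is the bookkeeping in part~\eqref{it:1}, where the mild difficulty is keeping track of which argument each multiplication operator acts on so that the one-to-one correspondence between the pre-Novikov axioms and the representation axioms comes out cleanly.
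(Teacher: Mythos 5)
Your proposal is correct and follows essentially the same route as the paper: the paper disposes of part~(a) as ``direct checking'' (which is precisely the verification you carry out, with the useful organizing observation that the $\mathcal{O}$-operator identity lets you rewrite $T(u\rhd v+u\lhd v)$ as $T(u)\circ T(v)$, so that the four pre-Novikov axioms reduce one-to-one to the four representation axioms), and it derives part~(b) from Proposition~\ref{pro:ndend} exactly as you do.
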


\begin{proof}
(\ref{it:1}) follows from a direct checking and
(\ref{it:2}) follows from Proposition~\ref{pro:ndend}.
\end{proof}

\begin{thm}\label{NYB-ND}
Let $(A, \lhd, \rhd)$ be a \ndend algebra and $(A,
\circ)$ be the associated Novikov algebra.  Then
\vspc
\begin{eqnarray}\label{eq:solu}
r:=\sum_{\alpha=1}^n(e_\alpha\otimes e_\alpha^\ast-e_\alpha^\ast\otimes e_\alpha),
\vspa
\end{eqnarray}
is a skewsymmetric solution of the \nybe in
the Novikov algebra $A\ltimes_{L_\rhd^\ast+R_\lhd^\ast,
-R_\lhd^\ast}A^\ast$, where $\{e_1, \ldots, e_n\}$ is a linear basis of
$A$ and $\{e_1^\ast, \ldots, e_n^\ast\}$ is the dual basis of
$A^\ast$. Moreover, $A\ltimes_{L_\rhd^\ast+R_\lhd^\ast,
    -R_\lhd^\ast}A^\ast$ with the bilinear form $\omega(\cdot,\cdot)$ given by
\begin{eqnarray}\label{skewsymm-bin}
\omega(a+f, b+g)=\langle g, a\rangle-\langle f, b\rangle \;\; \text{for all $a$, $b\in A$, $f$, $g\in
A^\ast$,}
\end{eqnarray} is a quasi-Frobenius Novikov algebra.
\end{thm}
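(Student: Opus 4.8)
The plan is to deduce both assertions from results already established, rather than verifying the defining identities by hand. For the first assertion I would combine Proposition~\ref{cor:iden}(\ref{it:2}) with Theorem~\ref{Goper}. By Proposition~\ref{cor:iden}(\ref{it:2}), the identity map $\id:A\to A$ is an $\mathcal{O}$-operator of the associated Novikov algebra $(A,\circ)$ with respect to the representation $(A, L_\rhd, R_\lhd)$. I would then apply Theorem~\ref{Goper} with $V=A$, $l_A=L_\rhd$, $r_A=R_\lhd$ and $T=\id$, for which the relevant semidirect product is precisely $A\ltimes_{L_\rhd^\ast+R_\lhd^\ast,-R_\lhd^\ast}A^\ast$.

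The only thing to record here is the image of $\id$ under the identification $\Hom_{\bf k}(A,A)\cong A\otimes A^\ast$. Since $\id$ corresponds to $r_{\id}=\sum_{\alpha=1}^n e_\alpha\otimes e_\alpha^\ast$, the element $r_{\id}-\tau r_{\id}$ equals $\sum_{\alpha=1}^n(e_\alpha\otimes e_\alpha^\ast-e_\alpha^\ast\otimes e_\alpha)$, which is exactly the $r$ of Eq.~\eqref{eq:solu}. Theorem~\ref{Goper} then yields at once that this $r$, skewsymmetric by construction, is a solution of the \nybe in $A\ltimes_{L_\rhd^\ast+R_\lhd^\ast,-R_\lhd^\ast}A^\ast$.

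For the second assertion I would invoke Proposition~\ref{quasi-Nov-equi}. I first note that $\omega$ is skewsymmetric, since $\omega(b+g,a+f)=\langle f,b\rangle-\langle g,a\rangle=-\omega(a+f,b+g)$, and nondegenerate, since the pairing between $A$ and $A^\ast$ is. The key computation is the dual basis of $\{e_1,\dots,e_n,e_1^\ast,\dots,e_n^\ast\}$ with respect to $\omega$. From $\omega(e_\alpha,e_\beta^\ast)=\delta_{\alpha\beta}$, $\omega(e_\alpha^\ast,e_\beta)=-\delta_{\alpha\beta}$ and $\omega(e_\alpha,e_\beta)=\omega(e_\alpha^\ast,e_\beta^\ast)=0$, one finds that the $\omega$-dual of $e_\beta$ is $e_\beta^\ast$ while the $\omega$-dual of $e_\beta^\ast$ is $-e_\beta$. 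Hence the element attached in Proposition~\ref{quasi-Nov-equi} to this basis and its $\omega$-dual basis is
\[
\sum_{\alpha=1}^n e_\alpha\otimes e_\alpha^\ast+\sum_{\alpha=1}^n e_\alpha^\ast\otimes(-e_\alpha)=\sum_{\alpha=1}^n(e_\alpha\otimes e_\alpha^\ast-e_\alpha^\ast\otimes e_\alpha),
\]
which again coincides with the $r$ of Eq.~\eqref{eq:solu}. Since this $r$ has just been shown to be a skewsymmetric solution of the \nybe, Proposition~\ref{quasi-Nov-equi} gives that $(A\ltimes_{L_\rhd^\ast+R_\lhd^\ast,-R_\lhd^\ast}A^\ast,\omega)$ is a quasi-Frobenius Novikov algebra.

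Thus the proof is essentially an assembly of earlier results, with no genuinely new identity to verify. The one place demanding care is the sign bookkeeping in the $\omega$-dual basis: because $\omega$ is skewsymmetric rather than symmetric, the $\omega$-dual of $e_\beta^\ast$ is $-e_\beta$, and it is exactly this sign that makes the two descriptions of $r$ agree --- the one produced by the $\mathcal{O}$-operator $\id$ as $r_{\id}-\tau r_{\id}$, and the one produced by the dual-basis recipe of Proposition~\ref{quasi-Nov-equi}. I expect this matching to be the only subtle step.
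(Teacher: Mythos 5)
Your proposal is correct and follows essentially the same route as the paper's own proof: Proposition~\ref{cor:iden}(\ref{it:2}) plus Theorem~\ref{Goper} applied to $T=\id$ for the first assertion, and Proposition~\ref{quasi-Nov-equi} with the $\omega$-dual basis $\{e_1^\ast,\ldots,e_n^\ast,-e_1,\ldots,-e_n\}$ for the second. The sign bookkeeping you flag is exactly the one computation the paper records, so nothing is missing.
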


\begin{proof}
By Proposition~\ref{cor:iden} (\ref{it:2}), the identity map
$\id: A\rightarrow A$ is an $\mathcal{O}$-operator of $(A, \circ)$
associated to $(A, L_\rhd, R_\lhd)$.  Hence the first
conclusion follows from Proposition~\ref{Goper}. Moreover, in the
vector space $A\oplus A^*$, the dual basis of the basis $\{e_1,
\cdots, e_n, e_1^\ast, \cdots, e_n^\ast\}$ associated with
$\omega(\cdot, \cdot)$ is $\{e_1^\ast, \cdots, e_n^\ast, -e_1,
\cdots, -e_n\}$. Hence the second conclusion follows from
Proposition \ref{quasi-Nov-equi}.
\end{proof}

\vspd
\section{Infinite-dimensional Lie bialgebras from the Novikov Yang-Baxter equation}
\label{sec:novlie} In this section, we use skewsymmetric
solutions of the NYBE to construct skewsymmetric solutions of the CYBE. We also present a construction of quasi-Frobenius $\ZZ$-graded Lie algebras from quasi-Frobenius Novikov algebras corresponding to a class of skewsymmetric solutions of the NYBE. Along with the results in Section~\ref{ss:nybe},
\ndend algebras can be used to obtain a large supply
of infinite-dimensional Lie bialgebras and quasi-Frobenius $\ZZ$-graded Lie algebras. An explicit example is provided.

In this section, $(A, \circ)$ is assumed to be a finite-dimensional Novikov algebra.

Let $L=\oplus_{i\in \ZZ} L_i$ be a $\ZZ$-graded Lie algebra.
Suppose that $r=\sum_{i,j,\alpha}a_{i\alpha}\otimes b_{j\alpha}\in L\hatot L$ as in Eq.~\eqref{eq:ssum}.
We denote
\vspb
\begin{eqnarray*}
&&[r_{12},r_{13}]:=\!\sum_{i,j,k,l,\alpha, \beta}[a_{i\alpha}, a_{k\beta}]\otimes b_{j\alpha}\otimes b_{l\beta},\;[r_{12},r_{23}]:=\!\sum_{i,j,k,l,\alpha, \beta}a_{i\alpha}\otimes [b_{j\alpha},a_{k\beta}]\otimes b_{l\beta},\\
&& [r_{13}, r_{23}]:=\!\sum_{i,j,k,l,\alpha,\beta} a_{i\alpha}\otimes a_{k\beta}\otimes [b_{j\alpha}, b_{l\beta}],
\vspb
\end{eqnarray*}
provided the sums make sense. Note that these sums make sense when $r=\sum_{i\in \ZZ, \alpha} c_{i,\alpha}\otimes d_{-i-s,\alpha}\in L\widehat{\otimes} L $ for some fixed $s\in \ZZ$, which is the case that we are interested in next.

If $r\in L\hatot L$ is skewsymmetric and satisfies the {\bf classical Yang-Baxter equation (CYBE)}
\vspb
$$[r_{12}, r_{13}]+[r_{12}, r_{23}]+[r_{13}, r_{23}]=0
\vspa
$$
as an element in $L \hatot L \hatot L$, then $r$ is called a {\bf completed solution of the CYBE} in $L$.
\vspb
\begin{pro}\label{Lie-coboundary}
If $r\in L\hatot L$ is a skewsymmetric completed
solution of the CYBE in $L$, then for the linear map $\delta:
L\rightarrow L\hatot L$ defined
by
\vspa
\begin{equation} \label{eq:rdelta}
\delta
(x):=(\ad_x\hatot  \id+\id \hatot  \ad_x)r \tforall x\in L,
\vspa
\end{equation}
the triple $(L,[\cdot,\cdot],\delta)$ is a \complete Lie bialgebra.
\vspb
\end{pro}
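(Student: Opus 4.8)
The plan is to verify, in turn, the three defining properties of a completed Lie bialgebra: the skewsymmetry $\delta=-\widehat{\tau}\delta$, the cocycle compatibility condition, and the completed co-Jacobi identity. The first two hold for any skewsymmetric $r$, while the CYBE enters only in the last and hardest step. Throughout, write $\rho_x:=\ad_x\hatot\id+\id\hatot\ad_x$, so that $\delta(x)=\rho_x(r)$.

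First, the skewsymmetry is immediate. A direct check on homogeneous tensors shows that $\widehat{\tau}\,(\ad_x\hatot\id)=(\id\hatot\ad_x)\,\widehat{\tau}$ and $\widehat{\tau}\,(\id\hatot\ad_x)=(\ad_x\hatot\id)\,\widehat{\tau}$, whence $\widehat{\tau}\rho_x=\rho_x\widehat{\tau}$. Using the skewsymmetry $\widehat{\tau}r=-r$ of $r$, I obtain $\widehat{\tau}\delta(x)=\widehat{\tau}\rho_x(r)=\rho_x(\widehat{\tau}r)=-\rho_x(r)=-\delta(x)$, which is the first completed Lie coalgebra axiom.

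Second, I would observe that $x\mapsto\rho_x$ is the canonical extension to $L\hatot L$ of the adjoint representation, and that it is a representation of $L$, that is, $\rho_{[a,b]}=\rho_a\rho_b-\rho_b\rho_a$ as operators on $L\hatot L$. This is a pointwise identity on each homogeneous component, hence valid on the completion. Since $\delta(x)=\rho_x(r)$ is the coboundary of the fixed element $r$, the compatibility condition follows formally: $\delta([a,b])=\rho_{[a,b]}(r)=\rho_a\rho_b(r)-\rho_b\rho_a(r)=\rho_a\delta(b)-\rho_b\delta(a)$, which is exactly the required identity. Note that this step does not use the CYBE.

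Third, and this is the substance of the argument, I would establish the completed co-Jacobi identity. Following the classical coboundary computation, I would expand $J(a):=(\id\hatot\delta)\delta(a)-(\widehat{\tau}\hatot\id)(\id\hatot\delta)\delta(a)-(\delta\hatot\id)\delta(a)$ by substituting the definition of $\delta$ twice, and then, using the skewsymmetry of $r$ together with the Jacobi identity in $L$, reorganize the resulting terms so as to identify $J(a)$ with the action of $a$ by the threefold adjoint $\ad_a\hatot\id\hatot\id+\id\hatot\ad_a\hatot\id+\id\hatot\id\hatot\ad_a$ on the CYBE element $[r_{12},r_{13}]+[r_{12},r_{23}]+[r_{13},r_{23}]\in L\hatot L\hatot L$. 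Since $r$ is a completed solution of the CYBE, this element vanishes, whence $J(a)=0$ and the completed co-Jacobi identity holds. The main obstacle is ensuring that all of these manipulations are legitimate in the completed setting: the triple-tensor brackets $[r_{12},r_{13}]$ and the like must be well-defined infinite sums, and the relevant permutations, the bilinearity of the bracket, and the Jacobi identity must pass to $L\hatot L\hatot L$. This is guaranteed by the standing hypothesis that $r=\sum_{i,\alpha}c_{i,\alpha}\otimes d_{-i-s,\alpha}$ for a fixed $s\in\ZZ$, which forces each homogeneous component of every expression appearing in the computation to be a finite sum, so the classical algebraic identities hold componentwise and therefore on the completion.
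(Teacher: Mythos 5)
Your proposal is correct and follows essentially the same route as the paper: the paper's proof simply invokes the classical coboundary Lie bialgebra argument (skewsymmetry of $\delta$ from that of $r$, the cocycle identity holding automatically for coboundaries, and co-Jacobi reducing via the Jacobi identity to the adjoint action on the CYBE element) and asserts that it extends to the completed case. You spell out that same argument and, usefully, make explicit why the extension is legitimate—namely that the graded form $r=\sum_{i,\alpha}c_{i,\alpha}\otimes d_{-i-s,\alpha}$ keeps every homogeneous component of every expression a finite sum—but this is precisely the justification the paper leaves implicit.
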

\begin{proof}
The result holds when $r\in L\ot L$ is a
skewsymmetric solution of the CYBE~\cite{CP}. The same
argument extends to the completed case.
\end{proof}

We give the following relation between the solutions of the NYBE and those of the CYBE.

\begin{pro}\label{pro:NYBE}
Let $(A,\circ)$ be a Novikov algebra and $\big(B=\oplus_{i\in
\ZZ}B_i,\diamond, (\cdot, \cdot)\big)$ be a quadratic $\ZZ$-graded
right  Novikov algebra. Let $L=A\otimes B$ be the induced Lie
algebra. Suppose that $r=\sum_\alpha x_\alpha\otimes y_\alpha\in
A\otimes A$ is a skewsymmetric solution of the NYBE in $A$.
Then for a basis $\{e_p\}_{p\in \Pi}$ consisting of homogeneous
elements of $B$ and its homogeneous dual basis $\{f_p\}_{p\in
\Pi}$ associated with the bilinear form $(\cdot, \cdot)$, the
tensor element
\vspa
\begin{eqnarray}\label{eq:N-CYBE}
r_L\coloneqq \sum_{p\in \Pi} \sum_\alpha
(x_\alpha\otimes e_p)\otimes (y_\alpha\otimes f_p) \in
L \hatot L
\vspb
\end{eqnarray}
is a skewsymmetric completed solution of the CYBE in $L$. Furthermore, if the quadratic $\mathbb{Z}$-graded right Novikov algebra is
$(B, \diamond, (\cdot,\cdot))$ $=({\bf k}[t,t^{-1}], \diamond,
(\cdot, \cdot))$ from Example~\ref{Laurent-Bilinear}, then
\vspa
\begin{eqnarray}\label{eq:affine} 
	r_L:=\sum_{ i\in
\mathbb{Z}}\sum_\alpha x_\alpha t^i\,\otimes \,y_\alpha t^{-i-1}\in L
\hatot  L
\vspb
\end{eqnarray}
is a skewsymmetric completed solution of the \cybe in $L$ if and
only if $r$ is a skewsymmetric solution of the \nybe in $A$.
\end{pro}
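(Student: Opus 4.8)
The plan is to verify skewsymmetry directly and then reduce the CYBE for $r_L$ to the NYBE for $r$ by separating the $A$-legs from the $B$-legs and testing the latter against the invariant form. Throughout write $r=\sum_\alpha x_\alpha\ot y_\alpha$ and let $c\coloneqq \sum_{p\in\Pi} e_p\ot f_p$ be the Casimir element of $(B,(\cdot,\cdot))$. Since the form is symmetric, $c$ is fixed by the flip of its two legs, i.e. $\sum_p e_p\ot f_p=\sum_p f_p\ot e_p$. Because $\widehat{\tau}$ transposes the two $A$-legs and the two $B$-legs of $r_L=\sum_{\alpha,p}(x_\alpha\ot e_p)\ot(y_\alpha\ot f_p)$ simultaneously, it acts as the $A$-flip of $r$ coupled with the $B$-flip of $c$; as $r$ is skewsymmetric and $c$ is symmetric, $\widehat{\tau}(r_L)=-r_L$, so $r_L$ is skewsymmetric.

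For the forward direction, set $C(r_L)\coloneqq [r_{L,12},r_{L,13}]+[r_{L,12},r_{L,23}]+[r_{L,13},r_{L,23}]\in L\hatot L\hatot L$. Expanding each bracket by the induced bracket of Theorem~\ref{pp:tensorlie} factors every summand into an $A$-part, a $\circ$-product of the $x_\alpha$'s, and a $B$-part, a $\diamond$-product of the $e_p$'s with the surviving $f_p$'s. I would test the three $B$-slots of $C(r_L)$ against arbitrary homogeneous $u,v,w\in B$ through $(\cdot,\cdot)$: the resolutions of identity $\sum_p(z,f_p)e_p=z=\sum_p(z,e_p)f_p$ then collapse each $B$-part to a single scalar built from $\diamond$-products of $u,v,w$. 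For instance, the leading summand of $[r_{L,12},r_{L,13}]$ has $A$-part $r_{12}\circ r_{13}$ and a $B$-part that pairs to the scalar $(v\diamond w,u)$. Invoking the invariance $(a\diamond b,c)=-(a,b\diamond c+c\diamond b)$ rewrites these scalars so that the three brackets acquire a common $B$-factor, while the attached $A$-tensors assemble — after using the skewsymmetry of $r$ and the right-Novikov axioms of $B$ — into exactly $r_{13}\circ r_{23}+r_{12}\star r_{23}+r_{13}\circ r_{12}=r\diamond r$. Thus the pairing of $C(r_L)$ against every $u\ot v\ot w$ is a scalar multiple of the $A^{\ot 3}$-component $r\diamond r$; if $r$ solves the NYBE this vanishes, and the left nondegeneracy of the pairing in Eq.~\eqref{eq:pairb} forces $C(r_L)=0$.

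The point where the symmetric invariant (quadratic) structure of $B$ is indispensable — and the main technical obstacle — is the appearance of the symmetrized product $\star$ (where $a\star b=a\circ b+b\circ a$) in the term $r_{12}\star r_{23}$. It is produced precisely because the right-hand side of the invariance identity, $-(a,b\diamond c+c\diamond b)$, symmetrizes $b$ and $c$, so that transporting a single $\diamond$ across the form converts a one-sided multiplication on the $A$-side into the two-sided $\star$. Carefully tracking these symmetrizations, together with keeping the $A$- and $B$-indices aligned through the several flips that the three brackets generate, is the delicate part; everything else is routine bookkeeping.

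For the converse I would specialize to $(B,\diamond,(\cdot,\cdot))=(\bfk[t,t^{-1}],\diamond,(\cdot,\cdot))$ of Example~\ref{Laurent-Bilinear}, where the dual basis of $\{t^i\}$ is $\{t^{-i-1}\}$ and $r_L=\sum_{i\in\ZZ}\sum_\alpha x_\alpha t^i\ot y_\alpha t^{-i-1}$ as in Eq.~\eqref{eq:affine}. Here the bracket and all sums are explicit, so $C(r_L)$ is a $\ZZ$-graded element of $L\hatot L\hatot L$ whose only free data are the $x_\alpha,y_\alpha$. Comparing the coefficients of a suitably chosen monomial $t^a\ot t^b\ot t^c$ isolates exactly the tensor $r\diamond r\in A\ot A\ot A$; since $C(r_L)=0$, that coefficient vanishes, yielding $r\diamond r=0$. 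The grading is what makes this extraction possible, which is why the converse is asserted only for the Laurent algebra.
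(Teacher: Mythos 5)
Your proposal is correct and takes essentially the same route as the paper's proof: skewsymmetry from the symmetry of the Casimir $\sum_{p}e_p\otimes f_p$ combined with the skewsymmetry of $r$; the forward direction by pairing the $B$-legs against homogeneous triples $u\otimes v\otimes w$, transporting $\diamond$ across the invariant form (which is exactly where the $\star$-symmetrization enters), and regrouping the $A$-tensors into permuted copies of $r\diamond r$ that vanish by the NYBE, with left nondegeneracy finishing the argument (the paper derives the same identities by this very pairing, just stated as tensor identities); and the converse by coefficient extraction in the Laurent case, where the paper's choice is the monomial $1\otimes t^{-1}\otimes t^{-2}$, whose coefficient is precisely $-(r\diamond r)$. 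One minor omission: in the "if and only if" for the Laurent algebra, the converse also requires deducing that $r$ itself is skewsymmetric, which the paper obtains by comparing the $i=0$ components of $\widehat{\tau}\,r_L=-r_L$; your write-up skips this, though it is a one-line fix.
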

If $B$ is a finite-dimensional right Novikov algebra, then Eq. (\ref{eq:N-CYBE}) is a finite sum.

\begin{proof} Adopting the notation in Eq.~\eqref{eq:pairb}, we have
\vspa
\begin{eqnarray*}
\Big(e_q\otimes e_s, \sum_{p\in \Pi}e_p\otimes
f_p\Big)=\sum_{p\in \Pi}(e_q, e_p)(e_s,f_p)=(e_q, e_s).
\vspb
\end{eqnarray*}
Since $(\cdot, \cdot)$ on $B$ is symmetric and nondegenerate, we obtain
$\sum_{p\in \Pi}e_p\otimes f_p=\sum_{p\in
\Pi}f_p\otimes e_p$. Therefore, $r_L$ is
skewsymmetric. Furthermore,
\vspb
\begin{eqnarray*}
&&[{r_L}_{12},{r_L}_{13}]+[{r_L}_{12}, {r_L}_{23}]+[{r_L}_{13}, {r_L}_{23}]\\
&=&\sum_{p,q\in \Pi}\sum_{\alpha,\beta} \Big((x_\alpha\circ x_\beta\otimes e_p\diamond e_q-x_\beta\circ x_\alpha\otimes e_q\diamond e_p)\otimes  (y_\alpha\otimes f_p)\otimes (y_\beta\otimes f_q)\\
&&+(x_\alpha\otimes e_p)\otimes (y_\alpha\circ x_\beta\otimes f_p\diamond e_q-x_\beta\circ y_\alpha\otimes e_q\diamond f_p)\otimes (y_\beta\otimes f_q)\\
&&+(x_\alpha\otimes e_p)\otimes (x_\beta\otimes
e_q)\otimes (y_\alpha\circ y_\beta\otimes f_p\diamond f_q-y_\beta\circ
y_\alpha\otimes f_q\diamond f_p)\Big).
\vspb
\end{eqnarray*}
For $s$, $u$, $v\in \Pi$, adopting the notation in Eq.~\eqref{eq:pairb} we obtain
\vspa
{\small \begin{eqnarray*}
&\Big(e_s\otimes e_u\otimes e_v, \sum\limits_{p,q\in \Pi} e_p\diamond e_q\otimes f_p\otimes f_q\Big)=(e_s, e_u\diamond e_v),\;\;\Big(e_s\otimes e_u\otimes e_v, \sum\limits_{p,q\in \Pi} e_q\diamond e_p\otimes f_p\otimes f_q\Big)=(e_s, e_v\diamond e_u),& \\
&\Big(e_s\otimes e_u\otimes e_v, \sum\limits_{p,q\in \Pi} e_p\otimes
f_p\ast e_q\otimes f_q\Big)=-(e_s, e_u\diamond e_v+e_v\diamond
e_u).&
\end{eqnarray*}}
Therefore, by the nondegeneracy of $(\cdot,\cdot)$ on $B$,
we get
\begin{eqnarray*}
\sum_{p,q\in \Pi}e_p\otimes f_p\diamond e_q\otimes
f_q=-\sum_{p,q\in \Pi}(e_p\diamond e_q\otimes  f_p\otimes
f_q+e_q\diamond e_p\otimes f_p\otimes f_q).
\vspb
\end{eqnarray*}
Similarly, we obtain
\vspa
\begin{eqnarray*}
&&\sum_{p,q\in \Pi} e_p\otimes e_q\otimes f_p\diamond f_q=-\sum_{p,q\in \Pi}(e_p\otimes e_q\diamond f_p\otimes f_q+e_p\otimes e_q\otimes f_q\diamond f_p),\\
&&\sum_{p,q\in \Pi}e_p\otimes e_q\diamond f_p\otimes
f_q=\sum_{p,q\in \Pi} e_q\diamond e_p\otimes  f_p\otimes
f_q,~~\sum_{p,q\in \Pi} e_p\otimes e_q\otimes
f_q\diamond f_p=\sum_{p,q\in \Pi} e_p\diamond e_q\otimes
f_p\otimes f_q.
\vspb
\end{eqnarray*}
Then we have
{\small
\begin{eqnarray*}
&&[{r_L}_{12},{r_L}_{13}]+[{r_L}_{12}, {r_L}_{23}]+[{r_L}_{13}, {r_L}_{23}]\\
&=&\sum_{p,q\in \Pi} \sum_{\alpha,\beta}\Big((x_\alpha\circ x_\beta\otimes e_p\diamond e_q)\otimes (y_\alpha\otimes f_p)\otimes (y_\beta\otimes f_q)-(x_\alpha\otimes e_p\diamond e_q)\otimes (y_\alpha\circ x_\beta \otimes f_p)\otimes (y_\beta\otimes f_q)\\
&&-(x_\alpha\otimes e_p\diamond e_q)\otimes (x_\beta\otimes
f_p)\otimes (y_\alpha\circ y_\beta\otimes f_q)
-(x_\alpha\otimes e_p\diamond e_q)\otimes (x_\beta \otimes f_p)\otimes (y_\beta\circ y_\alpha\otimes f_q\diamond f_p)\Big)\\
&&-\sum_{p, q\in \Pi}\sum_{\alpha,\beta}\Big((x_\beta\circ x_\alpha\otimes e_q\diamond e_p)\otimes (y_\alpha\otimes f_p)\otimes (y_\beta\otimes f_q)+(x_\alpha\otimes e_q\diamond e_p)\otimes(y_\alpha\circ x_\beta\otimes f_p)\otimes (y_\beta\otimes f_q)\\
&&+(x_\alpha\otimes e_q\diamond e_p)\otimes (x_\beta\circ y_\alpha\otimes
f_p)\otimes (y_\beta\otimes f_q)
+(x_\alpha\otimes e_q\diamond e_p)\otimes (x_\beta\otimes f_p)\otimes (y_\alpha\circ y_\beta \otimes f_q)\Big)\\
&=& 0.
\end{eqnarray*}
} Therefore, $r_L$ is a skewsymmetric completed
solution of the CYBE in $L$.

Next, we consider the case when $(B, \diamond, (\cdot, \cdot))=({\bf k}[t,t^{-1}],
\diamond, (\cdot, \cdot))$. Note that $\{t^{-i-1}|i\in \mathbb{Z}\}$ is the basis of ${\bf k}[t, t^{-1}]$ dual to $\{t^{i}|i\in
\mathbb{Z}\}$ associated with the bilinear form defined by $(t^i, t^j)=\delta_{i+j+1,0}$ for
all $i$, $j\in \mathbb{Z}$. Then the ``if" part follows from the
proof above.

Since $\widehat{\tau }r_L=\sum_{ i\in
\mathbb{Z}}\sum_\alpha y_\alpha t^{-i-1}\otimes x_\alpha t^{i} =\sum_{
j\in \mathbb{Z}} \sum_\alpha y_\alpha t^{j}\otimes x_\alpha t^{-j-1}$, we find
that $r$ is skewsymmetric by setting $i=0$. Moreover,
{\small
\begin{eqnarray*} 0&=&[{r_L}_{12},
{r_L}_{13}]
+[{r_L}_{12},{r_L}_{23}]+[{r_L}_{13},{r_L}_{23}]\\
&=&\sum_{i,j\in \mathbb{Z}}\sum_{\alpha,\beta} (i(x_\alpha\circ
x_\beta)t^{i+j-1}-j(x_\beta\circ x_\alpha)t^{i+j-1})\otimes
y_\alpha t^{-i-1}\otimes y_\beta t^{-j-1}+x_\alpha t^i\otimes ((-i-1)(y_\alpha \circ x_\beta)t^{-i+j-2}\\
&&\hspace{0.4cm}-j(x_\beta\circ y_\alpha)t^{-i+j-2})\otimes
y_\beta t^{-j-1}+x_\alpha t^i\otimes x_\beta t^j\otimes
((-i-1)(y_\alpha\circ y_\beta)t^{-i-j-3}+(j+1)(y_\beta\circ y_\alpha)t^{-i-j-3}).
\end{eqnarray*}}
Comparing the coefficients of $1\otimes t^{-1}\otimes t^{-2}$
yields that $r$ is a skewsymmetric solution of the \nybe in
the Novikov algebra $A$.
\vspb
\end{proof}
\begin{rmk}
Let $(B, \diamond)=({\bf k}[t,t^{-1}], \diamond, (\cdot,
\cdot))$ be the quadratic $\mathbb{Z}$-graded right Novikov
algebra in Example \ref{Laurent-Bilinear}. Set
$r_L=r(t_1,t_2)$ as in Eq.~\eqref{eq:affine}. The
CYBE for $r_L$ can
be written as
\begin{eqnarray*}
[r_{12}(t_1,t_2),r_{13}(t_1,t_3)]+[r_{12}(t_1,t_2),r_{23}(t_2,t_3)]+[r_{13}(t_1,t_3),
r_{23}(t_2,t_3)]=0.
\end{eqnarray*}
Although the form of this equation resembles the classical
Yang-Baxter equation \emph{with spectral parameters}
\cite{CP,KZ}, its meaning is different, due to the fact that it does
not make sense when $t_1$, $t_2$, $t_3$ are taken to be in ${\bf
k}$.
\end{rmk}
\begin{ex}
Let $(A={\bf k} a\oplus {\bf k} b \oplus {\bf k}c, \circ)$ be the Novikov algebra given by
\begin{eqnarray*}
&&a\circ a=a,~~ a\circ b=\frac{1}{2}b,~~b\circ a=b,~~a\circ c=0,~~c\circ a=c,\\
&&b\circ b=c,~~b\circ c=c\circ b=c\circ c=0.
\end{eqnarray*}
Then the affinization $L$ of $(A, \circ)$ is the vector space spanned by $\left\{a_i:=at^i, b_i:=bt^i, c_i:=ct^i\,\big |\, i\in \ZZ\right\}$ with the Lie brackets
\begin{eqnarray*}
&&[a_i, a_j]=(i-j)a_{i+j-1},\;\;[a_i, b_j]=(\frac{i}{2}-j)b_{i+j-1},\;\;[a_i, c_j]=-jc_{i+j-1},\\
&&[b_i, b_j]=(i-j)c_{i+j-1},\;\;[b_i, c_j]=[c_i,c_j]=0\;\;\text{for all $i$, $j\in \ZZ$.}
\end{eqnarray*}
Note that $L$ is isomorphic to the centerless Schr\"odinger-Virasoro algebra given in \cite{H}.
\\
\indent Let $r=b\otimes c-c\otimes b$. It is easy to check that
$r$ is a skewsymmetric solution of the NYBE in the Novikov algebra $(A,\circ)$. Then by
Proposition \ref{pro:NYBE}, $r_L=\sum_{i\in \ZZ}(b_i\otimes
c_{-i-1}-c_i\otimes b_{-i-1})$ is a skewsymmetric completed
solution of the CYBE in the Lie algebra $L$.
\vspd
\end{ex}

\begin{cor}\label{cor:same}
Under the same assumption as in Proposition~\ref{pro:NYBE},
 let
$\Delta_r:A\rightarrow A\otimes A$ be a linear map defined by
Eq.~\eqref{co1} with $r\in A\otimes A$ and $\delta:L\rightarrow L\hatot L$ be a linear map defined by
Eq.~\eqref{GLiebi1}. Then $(A,\circ,-\Delta_r)$ is a Novikov
bialgebra and hence $(L,[\cdot,\cdot],\delta)$ is a
\complete Lie bialgebra by Theorem~\ref{Gcorrespond6}. It
coincides with the \complete Lie bialgebra with $\delta$ defined
by Eq.~\eqref{eq:rdelta} through $r_{L}$ by
Proposition~\ref{Lie-coboundary}, where $r_{L}$ is defined
by Eq.~\eqref{eq:N-CYBE}. In other words, the second square from
the right in the diagram~\eqref{eq:bigdiag} commutes$:$ \vspa
$$  \xymatrix{
\text{skewsymmetric solutions}\atop \text{of NYBE} \ar[rr]^-{\rm Cor. \ref{corco2}}   \ar[d]_-{\rm Prop. \ref{pro:NYBE}}&& \text{Novikov}\atop \text{ bialgebras} \ar[d]_-{\rm Thm. \ref{Gcorrespond6}}\\
\text{skewsymmetric solutions}\atop \text{of CYBE} \ar[rr]^-{\rm
Prop. \ref{Lie-coboundary}}
            && \text{Lie}\atop \text{ bialgebras}  }
\vspb
$$
\end{cor}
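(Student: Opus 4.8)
The plan is to dispatch the first two assertions from results already established and then to prove directly that the two coproducts on $L$ coincide. First, since $r$ is a skewsymmetric solution of the \nybe, so is $-r$, and $r\diamond r=0$ forces $(-r)\diamond(-r)=0$; as $\Delta_{-r}=-\Delta_r$ by the linearity of \eqref{co1}, Corollary~\ref{corco2} shows that $(A,\circ,-\Delta_r)$ is a Novikov bialgebra. Theorem~\ref{Gcorrespond6} then produces the \complete Lie bialgebra $(L,[\cdot,\cdot],\delta)$ whose $\delta$ is \eqref{GLiebi1} applied to the coproduct $-\Delta_r$, that is $\delta(a\ot b)=-(\id_{L\hatot L}-\widehat{\tau})(\Delta_r(a)\bullet\Delta_B(b))$. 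On the other side, Proposition~\ref{pro:NYBE} shows that $r_L$ of \eqref{eq:N-CYBE} is a skewsymmetric completed solution of the \cybe in $L$, so Proposition~\ref{Lie-coboundary} makes $(L,[\cdot,\cdot],\delta_L)$ a \complete Lie bialgebra with $\delta_L$ given by \eqref{eq:rdelta}. The remaining task, and the real content, is to prove $\delta=\delta_L$ as maps $L\to L\hatot L$.

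The key reduction I would use is that only $B$ carries a nondegenerate form. Reorganizing $L\hatot L$ as $(A\ot A)\ot(B\hatot B)$, which is legitimate because $A$ is finite-dimensional, I would pair the two $B$-tensor-slots of $\delta(a\ot b)$ and of $\delta_L(a\ot b)$ against an arbitrary pair of homogeneous elements $u,v\in B$ through \eqref{eq:pairb}, leaving an element of $A\ot A$. By the left nondegeneracy recorded after \eqref{eq:pairb} it then suffices to show that the two resulting $A\ot A$-valued expressions agree for all $u,v$. For the affinization coproduct, using $(\Delta_B(b),u\ot v)=(b,u\diamond v)$ from \eqref{eq:coproduct-self}, this pairing gives $-(b,u\diamond v)\,\Delta_r(a)+(b,v\diamond u)\,\tau\Delta_r(a)$. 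Expanding $\Delta_r(a)=(L_A(a)\ot\id)r+(\id\ot L_{A,\star}(a))r$ and using skewsymmetry $\tau r=-r$ to rewrite $\tau\Delta_r(a)=-(\id\ot L_A(a))r-(L_{A,\star}(a)\ot\id)r$ turns this into a combination of the four basic tensors $(L_A(a)\ot\id)r$, $(\id\ot L_A(a))r$, $(R_A(a)\ot\id)r$, $(\id\ot R_A(a))r$ with explicit scalar coefficients built from the form on $B$.

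For the coboundary coproduct I would expand $\delta_L(a\ot b)=(\ad_{a\ot b}\hatot\id+\id\hatot\ad_{a\ot b})r_L$ using the bracket of the induced Lie algebra, and then contract the copairing $\sum_p e_p\ot f_p$ against $u\ot v$ by means of the duality identities $\sum_p(e_p,u)f_p=u$ and $\sum_p(f_p,v)e_p=v$. This produces the same four basic tensors, now carrying the scalar coefficients $(b\diamond v,u)$, $(v\diamond b,u)$, $(b\diamond u,v)$ and $(u\diamond b,v)$. The crux is then to reconcile these coefficients with those of the affinization side: the invariance \eqref{Rbilinear1} of the form on $B$ yields $(b\diamond v,u)=(b\diamond u,v)=-(b,u\diamond v+v\diamond u)$ together with $(v\diamond b,u)=(b,v\diamond u)$ and $(u\diamond b,v)=(b,u\diamond v)$, after which the two expressions agree termwise.

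I expect this coefficient bookkeeping to be essentially the only obstacle, and it is exactly where the right-Novikov invariance and the skewsymmetry of $r$ enter; it is also what forces the sign $-\Delta_r$ rather than $+\Delta_r$, since the coboundary side is rigidly built from $+r$. Everything else is formal. Once $\delta=\delta_L$ is verified in this way, the two \complete Lie bialgebras are identified, which is precisely the commutativity of the second square from the right in the diagram~\eqref{eq:bigdiag}.
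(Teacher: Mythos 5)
Your proposal is correct and takes essentially the same route as the paper's proof: both reduce the key identity $\delta=\delta_L$ to the invariance identity \eqref{Rbilinear1} of the quadratic form on $B$, the (left) nondegeneracy of the induced pairings, and the skewsymmetry of $r$, after writing both coproducts in terms of the same four basic tensors $(L_A(a)\otimes \id)r$, $(\id\otimes L_A(a))r$, $(R_A(a)\otimes \id)r$, $(\id\otimes R_A(a))r$. The only difference is bookkeeping direction: the paper converts the dual-basis sums $\sum_p b\diamond e_p\otimes f_p$, $\sum_p e_p\otimes f_p\diamond b$, $\sum_p e_p\diamond b\otimes f_p$ into expressions in $\Delta_B(b)$ and $\widehat{\tau}\Delta_B(b)$ and compares tensors in $L\hatot L$, whereas you pair both sides against test elements $u\otimes v$ and compare the resulting $A\otimes A$-valued coefficients, which is the same computation read in the opposite direction.
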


\begin{proof}
By Corollary \ref{corco2}, $(A, \circ, \Delta_r)$ is a Novikov
bialgebra. Then $(A, \circ, -\Delta_r)$ is also a Novikov
bialgebra.
By Theorem \ref{Gcorrespond6}, there is a \complete Lie
bialgebra structure $(L, [\cdot, \cdot], \delta)$ on
$L$ where $\delta$ is induced from $-\Delta_r$ by Eq.
(\ref{GLiebi1}). That is, for all $a\in A$ and $b\in B$, we have
\vspa
\begin{eqnarray*}
\delta(a\otimes b)&=& \sum_{i,i,\beta}\sum_\alpha(-(a\circ x_\alpha\otimes b_{1i\beta})\otimes (y_\alpha\otimes b_{2j\beta})+(y_\alpha\otimes b_{2j\beta})\otimes( a\circ x_\alpha\otimes b_{1i\beta})\\
&&-(x_\alpha\otimes b_{1i\beta})\otimes (a\star y_\alpha\otimes
b_{2j\beta})+(a\star y_\alpha\otimes b_{2j\beta})\otimes (x_\alpha\otimes
b_{1i\beta})).
\vspc
\end{eqnarray*}
On the other hand,
\vspb
{\small \begin{eqnarray*}
&&(\ad_{a\otimes b}\hatot  \id+\id \hatot  \ad_{a\otimes b})\sum_{p\in \Pi}\sum_\alpha (x_\alpha\otimes e_p)\otimes (y_\alpha \otimes f_p)\\
&=&\sum_{p\in \Pi}\sum_\alpha\Big((a\circ x_\alpha\otimes b\diamond e_p-x_\alpha\circ
a\otimes e_p\diamond b)\otimes (y_\alpha\otimes f_p) \\
&&\qquad \quad +(x_\alpha\otimes
e_p)\otimes (a\circ y_\alpha\otimes b\diamond f_p-y_\alpha\circ
a\otimes f_p\diamond b)\Big).
\vspc
\end{eqnarray*}}
For the basis elements $e_q$, $e_s\in B$, we have
\begin{eqnarray*}
\Big(e_q\otimes e_s, \sum_{p\in \Pi} b\diamond e_p\otimes
f_p\Big) &=&(e_q,b\diamond e_s)=(-e_q\diamond e_s-e_s\diamond e_q, b).
\end{eqnarray*}
Similarly, we obtain
\begin{eqnarray*}
\Big(e_q\otimes e_s, \sum_{i,j,\beta} b_{1i\beta}\otimes b_{2j\beta}\Big)=(e_q\diamond
e_s, b),~~~\Big(e_q\otimes e_s, \sum_{p\in \Pi} e_p\otimes
f_p\diamond b\Big)=(e_q\diamond e_s,b).
\end{eqnarray*}
Then by the nondegeneracy of $(\cdot, \cdot)$, we
have
\begin{eqnarray*}
&&\sum_{p\in \Pi} b\diamond e_p\otimes f_p=\sum_{p\in \Pi}b\diamond f_p\otimes e_p=-\sum_{i,j,\beta}(b_{1i\beta}\otimes b_{2j\beta}+b_{2j\beta}\otimes b_{1i\beta}),\\
&&\sum_{p\in \Pi}e_p\otimes f_p\diamond b=\sum_{i,j,\beta}
b_{1i\beta}\otimes b_{2j\beta},~~~\sum_{p\in \Pi}e_p\diamond
b\otimes f_p=\sum_{i,j,\beta} b_{2j\beta}\otimes b_{1i\beta}.
\end{eqnarray*}
Therefore,
{\small
\begin{eqnarray*}
&&(\ad_{a\otimes b}\hatot  \id+\id \hatot  \ad_{a\otimes b})\sum_{p\in \Pi}\sum_\alpha (x_\alpha\otimes e_p)\otimes (y_\alpha \otimes f_p)\\
&=&-\sum_{i,j,\beta}\sum_\alpha\Big((a\circ x_\alpha\otimes b_{1i\beta})\otimes
(y_\alpha\otimes b_{2j\beta})+(a\circ x_\alpha\otimes b_{2j\beta})\otimes
(y_\alpha\otimes b_{1i\beta})
+(x_\alpha\circ a\otimes b_{2j\beta})\otimes (y_\alpha\otimes b_{1i\beta})\\
&&\hspace{0.4cm}+(x_\alpha\otimes b_{1i\beta})\otimes (a\circ y_\alpha\otimes b_{2j\beta})+(x_\alpha\otimes b_{2j\beta})\otimes (a\circ y_\alpha\otimes b_{1i\beta})+(x_\alpha\otimes b_{1i\beta})\otimes (y_\alpha\circ a\otimes b_{2j\beta})\Big)\\
&=& \delta(a\otimes b).
\end{eqnarray*}}
Hence the conclusion follows.
\end{proof}

Combining Theorem \ref{NYB-ND}, Proposition \ref{pro:NYBE}
and Corollary \ref{cor:same}, we obtain

\begin{thm}\label{const-Lie-bialgebra}
\label{thm:novlie}
Let $(A, \lhd, \rhd)$ be a \ndend algebra and $(A, \circ)$ be the
associated Novikov algebra.  Let $r$ be the skewsymmetric
solution of the \nybe in the Novikov algebra
$\widetilde{A}:= A\ltimes_{L_\rhd^\ast+R_\lhd^\ast, -R_\lhd^\ast}A^\ast$
defined by Eq.~\eqref{eq:solu}. Take $(B, \diamond, (\cdot,
\cdot))$ to be  a quadratic $\ZZ$-graded right Novikov algebra, and take the induced Lie algebra  $L$ to be $\widetilde{A}\otimes B$
 from $(\widetilde{A}, \circ)$ and $(B, \diamond)$. Then $r_{L}$ defined by Eq.~\eqref{eq:N-CYBE} is a skewsymmetric completed solution of the
CYBE in $L$. Hence,
there is a completed Lie bialgebra $(L,\delta)$ with $\delta$
defined by Eq.~\eqref{eq:rdelta} through $r_{L}$.
\vspc
\end{thm}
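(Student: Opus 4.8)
The statement is deliberately set up as an immediate consequence of three results proved above, so the plan is to chain them together, checking at each handoff that the hypotheses transfer correctly. First I would invoke Theorem~\ref{NYB-ND}. Since $(A,\lhd,\rhd)$ is a \ndend algebra with associated Novikov algebra $(A,\circ)$, that theorem furnishes the element $r$ of Eq.~\eqref{eq:solu} as a skewsymmetric solution of the \nybe in the semidirect product $\widetilde{A}=A\ltimes_{L_\rhd^\ast+R_\lhd^\ast,-R_\lhd^\ast}A^\ast$. The one point to record is that $\widetilde{A}$ is again a finite-dimensional Novikov algebra: as $A$ is finite-dimensional so is $A^\ast$, and by Proposition~\ref{pro:semi} the semidirect product with the representation $(A^\ast,L_\rhd^\ast+R_\lhd^\ast,-R_\lhd^\ast)$ is a Novikov algebra. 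Hence $r\in\widetilde{A}\otimes\widetilde{A}$ is a genuine skewsymmetric solution of the \nybe in a finite-dimensional Novikov algebra, which is exactly the input needed next.

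Next I would apply Proposition~\ref{pro:NYBE} with the Novikov algebra there taken to be $\widetilde{A}$ (in place of $A$) and with the same quadratic $\ZZ$-graded right Novikov algebra $(B,\diamond,(\cdot,\cdot))$. The solution $r$ from the previous step feeds directly into the hypothesis, and the conclusion yields that $r_L$, formed as in Eq.~\eqref{eq:N-CYBE} from $r$ together with a homogeneous basis $\{e_p\}$ of $B$ and its dual basis $\{f_p\}$, is a skewsymmetric completed solution of the \cybe in the induced Lie algebra $L=\widetilde{A}\otimes B$. Only the first, general quadratic $\ZZ$-graded part of Proposition~\ref{pro:NYBE} is used here; the Laurent-polynomial specialization is not required.

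Finally I would close with Proposition~\ref{Lie-coboundary}: a skewsymmetric completed solution $r_L$ of the \cybe in $L$ produces, through Eq.~\eqref{eq:rdelta}, a cobracket $\delta(x)=(\ad_x\hatot\id+\id\hatot\ad_x)r_L$ making $(L,[\cdot,\cdot],\delta)$ a \complete Lie bialgebra, which is the asserted conclusion. Since all the substantive work is already contained in the cited results, I do not expect any serious obstacle; the only thing requiring care is that the hypotheses match genuinely at each step—chiefly that $\widetilde{A}$ qualifies as the finite-dimensional Novikov algebra demanded by Proposition~\ref{pro:NYBE}, and that the $r_L$ produced there coincides term for term with the one named in the statement, which is immediate from comparing Eq.~\eqref{eq:N-CYBE} in both places. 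Thus the proof is a matter of bookkeeping rather than computation.
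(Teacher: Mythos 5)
Your proposal is correct and follows essentially the same route as the paper, whose entire proof is the sentence ``Combining Theorem~\ref{NYB-ND}, Proposition~\ref{pro:NYBE} and Corollary~\ref{cor:same}, we obtain'' the stated result; your only deviation is citing Proposition~\ref{Lie-coboundary} directly for the final step rather than Corollary~\ref{cor:same} (which itself rests on Proposition~\ref{Lie-coboundary}), and that suffices for the statement as given. Your explicit check that $\widetilde{A}$ is a finite-dimensional Novikov algebra, so that it legitimately plays the role of $A$ in Proposition~\ref{pro:NYBE}, is exactly the right bookkeeping point.
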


Now we introduce the Lie algebra structure corresponding to the ``affinization" of quasi-Frobenius Novikov algebras.
\vspb
\begin{defi}
Let $L=\oplus_{i\in \ZZ}L_i$ be a $\ZZ$-graded Lie algebra. If there is a skewsymmetric nondegenerate graded bilinear form $(\cdot, \cdot)_L$ on $L$ satisfying
\begin{eqnarray}
([a, b],c)_L+([c,a],b)_L+([b,c],a)_L=0\;\;\text{for all }  a, b, c\in L,
\end{eqnarray}
then $(L, (\cdot, \cdot)_L)$ is called a {\bf quasi-Frobenius $\ZZ$-graded Lie algebra}.
\vspb
\end{defi}
\begin{rmk}
For a quasi-Frobenius $\ZZ$-graded Lie algebra  $(L=\oplus_{i\in \ZZ}L_i, (\cdot, \cdot)_L)$, when $L=L_0$,  $(L, (\cdot, \cdot)_L)$ is the usual quasi-Frobenius Lie algebra \cite{BFS}.
\end{rmk}
\begin{pro}\label{Constr-quasi-Lie}
Let $(A, \circ)$ be a Novikov algebra, $(B=\oplus_{i\in \ZZ}B_i, \diamond, (\cdot,\cdot))$ be a quadratic $\ZZ$-graded right Novikov algebra, and $L=A\otimes B$ be the induced Lie algebra. Define a bilinear form $(\cdot, \cdot)_L$ on $L$ by
\vspb
\begin{eqnarray}\label{eq:quasi-Lie}
(a_1\otimes b_1, a_2\otimes
b_2)_L=\omega(a_1,a_2)(b_1,b_2)\;\;\;\text{for
all } a_1, a_2\in A, b_1, b_2\in B.
\end{eqnarray}
If $(A, \circ, \omega(\cdot,\cdot))$ is a quasi-Frobenius Novikov algebra, then $(L, (\cdot, \cdot)_L)$ is a quasi-Frobenius $\ZZ$-graded Lie algebra. Furthermore, if the quadratic $\mathbb{Z}$-graded right Novikov algebra is
$(B, \diamond, (\cdot,\cdot))$ $=({\bf k}[t,t^{-1}], \diamond,
(\cdot, \cdot))$ from Example~\ref{Laurent-Bilinear}, then $(L, (\cdot, \cdot)_L)$ is a quasi-Frobenius $\ZZ$-graded Lie algebra if and only if $(A, \circ, \omega(\cdot,\cdot))$ is a quasi-Frobenius Novikov algebra.
\end{pro}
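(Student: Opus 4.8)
The plan is to verify the three structural requirements of a quasi-Frobenius $\ZZ$-graded Lie algebra for $(L,(\cdot,\cdot)_L)$ --- that $(\cdot,\cdot)_L$ is skewsymmetric, nondegenerate, and graded --- together with the cyclic identity, and then to run the argument in reverse in the Laurent case. The first three are routine: since $\omega$ is skewsymmetric and $(\cdot,\cdot)$ on $B$ is symmetric, the product form $(\cdot,\cdot)_L=\omega\ot(\cdot,\cdot)$ is skewsymmetric; nondegeneracy of $(\cdot,\cdot)_L$ follows from nondegeneracy of $\omega$ and of $(\cdot,\cdot)$; and gradedness is inherited from the grading of $B$ via $(B_i,B_j)=0$ for $i+j+m\neq0$. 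So the whole content lies in the cyclic identity $([x,y],z)_L+([z,x],y)_L+([y,z],x)_L=0$.

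For the forward direction I would expand this cyclic sum on $x=a_1\ot b_1$, $y=a_2\ot b_2$, $z=a_3\ot b_3$ using the induced bracket and the factorization of $(\cdot,\cdot)_L$. Writing $P(i,j,k):=\omega(a_i\circ a_j,a_k)$ and $Q(i,j,k):=(b_i\diamond b_j,b_k)$, the six resulting terms assemble into the single signed sum
\[
S=\sum_{\sigma\in S_3}\operatorname{sgn}(\sigma)\,P\big(\sigma(1),\sigma(2),\sigma(3)\big)\,Q\big(\sigma(1),\sigma(2),\sigma(3)\big).
\]
Two features of $Q$ then do the work. First, the invariance $(a\diamond b,c)=-(a,b\diamond c+c\diamond b)$ together with symmetry of $(\cdot,\cdot)$ shows $Q$ is symmetric in its last two arguments; pairing each $\sigma$ with $\sigma\cdot(2\,3)$, which flips the sign but leaves $Q$ unchanged, collapses $S$ to
\[
S=\big(P(1,2,3)-P(1,3,2)\big)Q(1,2,3)+\big(P(2,3,1)-P(2,1,3)\big)Q(2,3,1)+\big(P(3,1,2)-P(3,2,1)\big)Q(3,1,2).
\]
Second, the quasi-Frobenius identity $\omega(a\circ b,c)-\omega(a\star c,b)+\omega(c\circ b,a)=0$ for $A$, applied to $(a_1,a_2,a_3)$ and to $(a_1,a_3,a_2)$, forces the three coefficients $P(1,2,3)-P(1,3,2)$, $P(2,3,1)-P(2,1,3)$, $P(3,1,2)-P(3,2,1)$ all to equal one common value $X$. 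Hence $S=X\big(Q(1,2,3)+Q(2,3,1)+Q(3,1,2)\big)$, and this cyclic sum of $Q$ vanishes: by invariance $Q(1,2,3)=-Q(2,3,1)-Q(3,2,1)$, while the last-two-slot symmetry gives $Q(3,1,2)=Q(3,2,1)$, so the sum telescopes to $0$. Thus $S=0$, which is the cyclic identity.

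For the converse I would specialize to $(B,\diamond,(\cdot,\cdot))=(\bfk[t,t^{-1}],\diamond,(\cdot,\cdot))$ with $(t^i,t^j)=\delta_{i+j+1,0}$. Skewsymmetry and nondegeneracy of $\omega$ are forced by the corresponding properties of $(\cdot,\cdot)_L$, since $(\cdot,\cdot)$ on $B$ is symmetric and nondegenerate. For the identity I would evaluate the cyclic condition on $at^i,bt^j,ct^k$; using $[at^i,bt^j]=i(a\circ b)t^{i+j-1}-j(b\circ a)t^{i+j-1}$ and $(t^p,t^q)=\delta_{p+q+1,0}$, all three terms carry the common factor $\delta_{i+j+k,0}$, and the accompanying scalar is
\[
E(i,j,k)=i\big(\omega(a\circ b,c)-\omega(a\circ c,b)\big)+j\big(\omega(b\circ c,a)-\omega(b\circ a,c)\big)+k\big(\omega(c\circ a,b)-\omega(c\circ b,a)\big).
\]
Vanishing of $E(i,j,k)$ for all $(i,j,k)$ with $i+j+k=0$ forces the linear form $E$ to be a multiple of $i+j+k$, i.e. its three coefficients coincide; equating the coefficients of $i$ and $k$ yields exactly $\omega(a\circ b,c)-\omega(a\star c,b)+\omega(c\circ b,a)=0$, the quasi-Frobenius identity for $(A,\circ,\omega)$.

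The main obstacle is the forward cyclic computation: a priori it is a sum of six terms entangling the Novikov product on $A$ with the right-Novikov product on $B$, with no obvious separation of the $A$-data from the $B$-data. The decisive realization that removes it is that this sum is a single antisymmetrized $S_3$-sum in which the $B$-factor $Q$ is symmetric in its last two slots, so that the quasi-Frobenius relations for $A$ and the invariance of the form on $B$ can be applied independently; keeping the permutation signs and the pairing $\sigma\leftrightarrow\sigma(2\,3)$ straight is the only delicate bookkeeping.
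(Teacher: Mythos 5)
Your proposal is correct and takes essentially the same route as the paper's proof: a direct expansion of the cyclic identity into six terms, reduced using the last-two-slot symmetry $(x\diamond y,z)=(x\diamond z,y)$ (from invariance plus symmetry of the form on $B$) together with two applications of the quasi-Frobenius identity on $A$, and, for the converse, evaluation on $at^i, bt^j, ct^k$ followed by comparison of the coefficients of the exponents. The only difference is bookkeeping: the paper eliminates the $B$-redundancies first and kills the two remaining $A$-coefficients of $(b_1\diamond b_2,b_3)$ and $(b_2\diamond b_1,b_3)$ by the quasi-Frobenius identity, whereas you use that identity to factor out a common scalar and finish by showing the cyclic sum of $B$-pairings vanishes --- the same ingredients, reorganized.
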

\begin{proof}
 Since $\omega(\cdot,\cdot)$ is skewsymmetric and nondegenerate, and $(\cdot, \cdot)$ is symmetric, nondegenerate and graded, we obtain that $(\cdot, \cdot)_L$ is skewsymmetric, nondegenerate and graded. Let $a_1\otimes b_1$, $a_2\otimes b_2$ and $a_3\otimes b_3\in L$. We obtain
 \begin{eqnarray*}
 &&([a_1\otimes b_1, a_2\otimes b_2], a_3\otimes b_3)_L+([a_3\otimes b_3, a_1\otimes b_1], a_2\otimes b_2)_L+([a_2\otimes b_2, a_3\otimes b_3], a_1\otimes b_1)_L\\
 &=&\omega(a_1\circ a_2,a_3)(b_1\diamond b_2, b_3)-\omega(a_2\circ a_1,a_3)(b_2\diamond b_1, b_3)
 +\omega(a_3\circ a_1,a_2)(b_3\diamond b_1, b_2)\\
 &&-\omega(a_1\circ a_3,a_2)(b_1\diamond b_3, b_2)+\omega(a_2\circ a_3,a_1)(b_2\diamond b_3, b_1)-\omega(a_3\circ a_2,a_1)(b_3\diamond b_2, b_1)\\
 &=& (\omega(a_1\circ a_2,a_3)-\omega(a_1\circ a_3,a_2))(b_1\diamond b_2, b_3)
 +(\omega(a_2\circ a_3,a_1)-\omega(a_2\circ a_1,a_3))(b_2\diamond b_1, b_3)\\
 &&+(\omega(a_3\circ a_1,a_2)-\omega(a_3\circ a_2,a_1))(b_3\diamond b_1, b_2)\\
 &=&(\omega(a_1\circ a_2,a_3)-\omega(a_1\circ a_3,a_2))(b_1\diamond b_2, b_3)
 +(\omega(a_2\circ a_3,a_1)-\omega(a_2\circ a_1,a_3))(b_2\diamond b_1, b_3)\\
 &&-(\omega(a_3\circ a_1,a_2)-\omega(a_3\circ a_2,a_1))(b_1\diamond b_2+b_2\diamond b_1, b_3)\\
 &=&(\omega(a_1\circ a_2, a_3)-\omega(a_1\star a_3, a_2)+\omega(a_3\circ a_2, a_1))(b_1\diamond b_2, b_3)\\
 &&-(\omega(a_2\circ a_1, a_3)-\omega(a_2\star a_3, a_1)+\omega(a_3\circ a_1, a_2))(b_2\diamond b_1,b_3)\\
 &=& 0.
 \end{eqnarray*}
 Therefore, $(L, (\cdot, \cdot)_L)$ is a quasi-Frobenius $\ZZ$-graded Lie algebra.
Suppose that the quadratic $\mathbb{Z}$-graded right Novikov algebra is
$(B, \diamond, (\cdot,\cdot))$ $=({\bf k}[t,t^{-1}], \diamond,
(\cdot, \cdot))$ from Example~\ref{Laurent-Bilinear}  and $(L, (\cdot, \cdot)_L)$ is a quasi-Frobenius $\ZZ$-graded Lie algebra. It is easy to see that $\omega(\cdot,\cdot)$ is skewsymmetric. For all $a$, $b$, $c\in A$ and $m$, $n$, $k\in \ZZ$, we have
\begin{eqnarray*}
0&=&([at^m,bt^n],ct^k)_L+([ct^k, at^m], bt^n)_L+([bt^m, ct^k], at^m)_L\\
&=& m(\omega(a\circ b, c)-\omega(a\star c,b)+\omega(c\circ b, a))-n(\omega(b\circ a, c)-\omega(b\star c, a)+\omega(c\circ a,b)).
\end{eqnarray*}
Setting $m=1$ and $n=0$ yields that $(A, \circ, \omega(\cdot,\cdot))$ is a quasi-Frobenius Novikov algebra. Then the proof is completed.
\vspb
\end{proof}
Combining Propositions \ref{quasi-Nov-equi}, \ref{pro:NYBE} and \ref{Constr-quasi-Lie}, we obtain the equivalences.
\vspb
\begin{thm}\label{quasi-Lie-equi}
Let $(A, \circ)$ be a Novikov algebra with a nondegenerate bilinear form $\omega(\cdot,\cdot)$. Let $\{e_\alpha|\alpha\in I\}$ be a basis of $A$ and $\{f_\alpha|\alpha\in I\}$ be its dual basis associated with $\omega(\cdot,\cdot)$. Set $r=\sum_{\alpha \in I}e_\alpha \otimes f_\alpha \in A\otimes A$ and let $L=A[t,t^{-1}]$ be the affinization of $(A, \circ)$. Then the following conditions are equivalent.
\begin{enumerate}
\item $(A, \circ, \omega(\cdot,\cdot))$ is a quasi-Frobenius Novikov algebra.
\item $r$ is a skewsymmetric solution of the NYBE in $A$.
\item \label{it:fourc} $r_L=\sum_{ i\in
\mathbb{Z}}\sum_{\alpha\in I} e_\alpha t^i\,\otimes \,f_\alpha t^{-i-1}\in L
\hatot  L$ is a skewsymmetric completed solution of the CYBE in $L$.
\item \label{it:fourd} $(L, (\cdot, \cdot)_L)$ is a quasi-Frobenius $\ZZ$-graded Lie algebra with $(\cdot, \cdot)_L$ defined by $(at^i,bt^j)_L=\omega(a,b)\delta_{i+j+1,0}$ for all $a$, $b\in A$ and $i$, $j\in \ZZ$.
        \end{enumerate}
\vspc
\end{thm}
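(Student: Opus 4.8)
The plan is to assemble the four-way equivalence from the three ``if and only if'' statements already established, each of which links a pair among the four conditions. Since every piece of data in (a)--(d) — the bilinear form $\omega$, the element $r=\sum_{\alpha\in I}e_\alpha\otimes f_\alpha$, the affinized tensor $r_L$, and the form $(\cdot,\cdot)_L$ — is fixed in the statement, the real work is to recognize each of these as precisely the data appearing in the relevant proposition once that proposition is specialized to the Laurent quadratic $\ZZ$-graded right Novikov algebra $(\bfk[t,t^{-1}],\diamond,(\cdot,\cdot))$ of Example~\ref{Laurent-Bilinear}. Note that for $B=\bfk[t,t^{-1}]$ the induced Lie algebra $A\otimes B$ is exactly the affinization $L=A[t,t^{-1}]$, so the ambient space matches in all four conditions.

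First I would invoke Proposition~\ref{quasi-Nov-equi}, whose hypotheses (a Novikov algebra with a nondegenerate form, a basis together with its $\omega$-dual basis, and $r=\sum_{\alpha\in I}e_\alpha\otimes f_\alpha$) are verbatim those of the present theorem; this yields (a)$\Leftrightarrow$(b). Next I would apply Proposition~\ref{pro:NYBE} with $B=(\bfk[t,t^{-1}],\diamond,(\cdot,\cdot))$: there the homogeneous basis $\{t^i\}$ of $B$ has $\omega$-dual basis $\{t^{-i-1}\}$ for the form $(t^i,t^j)=\delta_{i+j+1,0}$, so the tensor of Eq.~\eqref{eq:affine} is exactly $\sum_{i\in\ZZ}\sum_{\alpha\in I}e_\alpha t^i\otimes f_\alpha t^{-i-1}$, i.e.\ the element $r_L$ in condition (c); the ``if and only if'' part of that proposition then gives (b)$\Leftrightarrow$(c). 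Finally I would apply Proposition~\ref{Constr-quasi-Lie} with the same $B$, whose induced form $(a_1\otimes b_1,a_2\otimes b_2)_L=\omega(a_1,a_2)(b_1,b_2)$ specializes to $(at^i,bt^j)_L=\omega(a,b)\delta_{i+j+1,0}$, matching condition (d); its converse direction gives (a)$\Leftrightarrow$(d).

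Chaining these, (a)$\Leftrightarrow$(b)$\Leftrightarrow$(c) together with (a)$\Leftrightarrow$(d) shows that all four conditions are mutually equivalent, completing the proof. The only point requiring care — and the closest this argument comes to a genuine obstacle — is the bookkeeping verification that the explicitly written $r_L$ and $(\cdot,\cdot)_L$ of conditions (c) and (d) coincide with the specialized data of Propositions~\ref{pro:NYBE} and~\ref{Constr-quasi-Lie}; once that identification is in place, no further computation is needed, since each analytic ingredient — the \nybe/\cybe correspondence and the quasi-Frobenius transfer — has already been proved in full generality.
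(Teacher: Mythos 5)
Your proposal is correct and follows essentially the same route as the paper, whose proof is precisely to combine Propositions~\ref{quasi-Nov-equi}, \ref{pro:NYBE} and \ref{Constr-quasi-Lie} specialized to the Laurent quadratic $\ZZ$-graded right Novikov algebra of Example~\ref{Laurent-Bilinear}, yielding (a)$\Leftrightarrow$(b), (b)$\Leftrightarrow$(c) and (a)$\Leftrightarrow$(d) respectively. Your identification of $r_L$ with Eq.~\eqref{eq:affine} and of $(\cdot,\cdot)_L$ with the specialization of Eq.~\eqref{eq:quasi-Lie} is exactly the bookkeeping the paper leaves implicit.
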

It is worth remarking that the equivalence of $\eqref{it:fourc}$ and $\eqref{it:fourd}$ provides instances in which the existing equivalence~\cite{BFS} between skewsymmetric solutions of the CYBE and quasi-Frobenius Lie algebras for finite-dimensional Lie algebras is extended to $\ZZ$-graded Lie algebras.

By Theorems \ref{quasi-Lie-equi} and \ref{NYB-ND}, we obtain the following consequence.
\begin{cor}
Let $(A, \lhd, \rhd)$ be a \ndend algebra and $(A, \circ)$ be the
associated Novikov algebra.  Let
$\widetilde{A}:= A\ltimes_{L_\rhd^\ast+R_\lhd^\ast, -R_\lhd^\ast}A^\ast$ be the semi-direct product Novikov algebra.
 Take $(B, \diamond, (\cdot,
\cdot))$ to be  a quadratic $\ZZ$-graded right Novikov algebra, and take the induced Lie algebra  $L$ to be $\widetilde{A}\otimes B$
 from $(\widetilde{A}, \circ)$ and $(B, \diamond)$. Then $(L, (\cdot, \cdot)_L)$ with $(\cdot, \cdot)_L$ defined by Eq.~\eqref{eq:quasi-Lie} is a quasi-Frobenius Lie algebra, where $\omega(\cdot,\cdot)$ is given by Eq.~\eqref{skewsymm-bin}.
\end{cor}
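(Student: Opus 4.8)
The plan is to deduce the statement as a direct composition of two results already proved above, with the semidirect product $\widetilde{A}$ serving as an intermediate quasi-Frobenius Novikov algebra; no genuinely new computation is needed.

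First I would invoke the second assertion of Theorem~\ref{NYB-ND}. Applied to the \ndend algebra $(A,\lhd,\rhd)$ and its associated Novikov algebra $(A,\circ)$, it states precisely that $\widetilde{A}=A\ltimes_{L_\rhd^\ast+R_\lhd^\ast,-R_\lhd^\ast}A^\ast$, together with the skewsymmetric nondegenerate bilinear form $\omega(\cdot,\cdot)$ of Eq.~\eqref{skewsymm-bin}, is a quasi-Frobenius Novikov algebra. (That $\widetilde{A}$ is a Novikov algebra in the first place is guaranteed by Propositions~\ref{pp:dualrep} and~\ref{pro:semi}, since $(A^\ast,L_\rhd^\ast+R_\lhd^\ast,-R_\lhd^\ast)$ is a representation of $(A,\circ)$.) This step is therefore a citation rather than an argument.

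Second, with the quasi-Frobenius Novikov algebra $(\widetilde{A},\circ,\omega(\cdot,\cdot))$ in hand, I would apply the forward direction of Proposition~\ref{Constr-quasi-Lie}, taking the Novikov algebra there to be $\widetilde{A}$ and keeping the given quadratic $\ZZ$-graded right Novikov algebra $B$. That proposition yields that the induced Lie algebra $L=\widetilde{A}\otimes B$, equipped with the bilinear form $(\cdot,\cdot)_L$ of Eq.~\eqref{eq:quasi-Lie}, is a quasi-Frobenius $\ZZ$-graded Lie algebra; forgetting the grading, it is in particular a quasi-Frobenius Lie algebra, which is the claim.

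The only point requiring care is the generality of $B$. Theorem~\ref{quasi-Lie-equi} packages the four-way equivalence, but only for the Laurent-polynomial algebra $\bfk[t,t^{-1}]$, so to admit an arbitrary quadratic $\ZZ$-graded right Novikov algebra $B$ I would route the second step through the construction half of Proposition~\ref{Constr-quasi-Lie}, which supplies exactly this direction for general $B$. Beyond matching notation---checking that the $\omega$ produced in Eq.~\eqref{skewsymm-bin} is precisely the form inserted into Eq.~\eqref{eq:quasi-Lie}, and that $\widetilde{A}$ occupies the quasi-Frobenius-Novikov slot of Proposition~\ref{Constr-quasi-Lie}---there is no substantive obstacle.
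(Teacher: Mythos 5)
Your proposal is correct and takes essentially the same approach as the paper, which likewise obtains the corollary as an immediate composition: Theorem~\ref{NYB-ND} makes $(\widetilde{A},\omega(\cdot,\cdot))$ a quasi-Frobenius Novikov algebra, and tensoring with $B$ then yields the quasi-Frobenius Lie algebra. Your one refinement---citing the forward direction of Proposition~\ref{Constr-quasi-Lie} rather than Theorem~\ref{quasi-Lie-equi} for the second step---is in fact the more precise choice, since Theorem~\ref{quasi-Lie-equi} is stated only for $B=\bfk[t,t^{-1}]$ while the corollary allows an arbitrary quadratic $\ZZ$-graded right Novikov algebra.
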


To finish the paper, we present a simple example of completed Lie bialgebras and quasi-Frobenius $\ZZ$-graded Lie algebras obtained by pre-Novikov algebras.

\begin{ex} \label{final-ex} Let $(A={\bf k}e, \lhd, \rhd)$ be the one-dimensional pre-Novikov algebra given by
    \vspa
\begin{eqnarray*}
e \lhd e=e,~~~~e\rhd e=0.
\end{eqnarray*}
Let $e^\ast \in A^\ast$ be the dual basis. Then the
Novikov algebra $A\ltimes_{L_\rhd^\ast+R_\lhd^\ast,
-R_{\lhd}^\ast} A^\ast$ is the vector space ${\bf k}e\oplus {\bf k}e^\ast$
endowed with the multiplication
\vspa
\begin{eqnarray} \notag
e\circ e=e, ~~~e\circ e^\ast=-e^\ast,~~~e^\ast\circ
e=e^\ast,~~~~e^\ast\circ e^\ast=0.
\end{eqnarray}
By Theorem \ref{NYB-ND}, $r=e\otimes e^\ast-e^\ast\otimes e$ is a
skewsymmetric solution of the \nybe in
$A\ltimes_{L_\rhd^\ast+R_\lhd^\ast, -R_{\lhd}^\ast} A^\ast$.

Let $(B, \diamond)=({\bf k}[t,t^{-1}], \diamond, (\cdot,
\cdot))$ be the quadratic $\mathbb{Z}$-graded right Novikov
algebra given in Example \ref{Laurent-Bilinear}. The induced
infinite-dimensional Lie algebra $L$ from
$A\ltimes_{L_\rhd^\ast+R_\lhd^\ast, -R_{\lhd}^\ast} A^\ast$ and
$(B, \diamond)$ is the vector space spanned by $\{e_i:=et^i, f^i:=e^\ast
t^{-i-1}\,|\, i\in \mathbb{Z}\}$ with the Lie brackets
\begin{eqnarray*}
[e_i, e_j]=(i-j)e_{i+j-1},~~~~[e_i, f^j]=-(i-j-1)f^{j-i+1},~~~[f^i, f^j]=0 ~~ \text{ for all } i, j\in \mathbb{Z}.
\end{eqnarray*}
By Proposition \ref{pro:NYBE}, $r_L=\sum_{i\in
\mathbb{Z}}(e_i\otimes f^i-f^i\otimes
e_i)$ is a skewsymmetric completed solution of the
CYBE in $L$. Then by
Corollary~\ref{cor:same}, there is a \complete Lie bialgebra
structure $(L, [\cdot, \cdot], \delta)$ given by
$$\delta(e_k)=k\Big(\sum_{i\in \mathbb{Z}}e_{k+i-1}\otimes
f^i-f^i\otimes
e_{k+i-1}\Big),\;\delta(f^k)=\sum_{i\in
\mathbb{Z}}(-k+2i-1) f^{k-i+1}\otimes f^i \  \text{ for all } k\in \ZZ.
$$

By Theorem \ref{NYB-ND}, $(A\ltimes_{L_\rhd^\ast+R_\lhd^\ast,
-R_{\lhd}^\ast} A^\ast, \bullet, \omega(\cdot,\cdot))$ is a quasi-Frobenius Novikov algebra with the bilinear form $\omega(\cdot,\cdot)$ given by
\begin{eqnarray*}
\omega(e,e)=\omega(e^\ast,e^\ast)=0,\;\;\omega(e,e^\ast)=-\omega(e^\ast,e)=1.
\end{eqnarray*}
Then by Theorem \ref{quasi-Lie-equi}, $(L, (\cdot,\cdot)_L)$ is a quasi-Frobenius Lie algebra with the bilinear form $(\cdot,\cdot)_L$ given by
\begin{eqnarray*}
(e_i, e_j)_L=(f^i, f^j)_L=0, (e_i, f^j)_L=-(f^j,
e_i)_L=\delta_{i,j}\;\;\text{for all } i, j\in \ZZ.
\end{eqnarray*}
Note that $(L, (\cdot, \cdot)_L)$  is a Frobenius Lie algebra, that is, there exists a linear function $F$ on $L$ such that $F([x,y])=(x,y)_L$ for all $x$, $y\in L$. In fact, the linear function $F$ on $L$ is defined by
\begin{eqnarray*}
F(e_i)=0,\;\; F(f^i)=\delta_{i,1} \;\;\text{for all $i\in \ZZ$.}
\end{eqnarray*}
One can check that  $F([x,y])=(x,y)_L$ for all $x$, $y\in L$. Therefore, $(L, (\cdot, \cdot)_L)$  is a Frobenius Lie algebra.
\vspb
\end{ex}

\noindent {\bf Acknowledgments.} This research is supported by
NSFC (12171129, 11871421, 11931009, 12271265),
the Zhejiang Provincial Natural Science Foundation of China (LY20A010022) and the Scientific Research Foundation of Hangzhou Normal University (2019QDL012), the Fundamental Research Funds for the Central Universities and Nankai Zhide Foundation. We thank Jun Pei for helpful discussions. We give our appreciation to the referees for their suggestions that have improved the paper.

\smallskip

\noindent
{\bf Declaration of interests. } The authors have no conflicts of interest to disclose.

\smallskip

\noindent
{\bf Data availability. } No new data were created or analyzed in this study.

\vspace{-.2cm}

\end{document}